\newtheorem{theorem}{Theorem}[section]
\newtheorem{lemma}{Lemma}[section]
\newtheorem{corollary}{Corollary}[section]
\newtheorem{definition}{Definition}[section]
\numberwithin{equation}{section}
\def\Z{\Bbb Z}
\def\R{\Bbb R}
\def\C{\Bbb C}
\def\N{\Bbb N}
\def\H{\Bbb H}
\def\D{\Delta}
\def\d{\partial}
\def\s{\sigma}
\def\e{\epsilon}
\def\a{\alpha}
\def\b{\beta}
\def\g{\gamma}
\title{The Stratified Spaces of Real Polynomials  \&  Trajectory Spaces of Traversing Flows}
\author{Gabriel Katz}
\address{5 Bridle Path Circle, Framingham, MA 01701, USA}
\email{gabkatz@gmail.com}
\begin{document}

\maketitle

\begin{abstract} 
This paper is the third in a series that researches the Morse Theory, gradient flows, concavity and complexity on smooth compact manifolds with boundary. Employing the local analytic models from \cite{K2}, for \emph{traversally generic flows} on $(n+1)$-manifolds $X$, we embark on a detailed and somewhat tedious study of universal combinatorics of their tangency patterns with respect to the boundary $\d X$. This combinatorics is captured by a universal poset $\Omega^\bullet_{'\langle n]}$ which depends only on the dimension of $X$.   It is intimately linked with the combinatorial patterns of real divisors of real polynomials in one variable of degrees which do not exceed $2(n+1)$.  Such patterns are elements of another natural poset $\Omega_{\langle 2n+2]}$ that describes the ways in which the real roots merge, divide, appear, and disappear under deformations of real polynomials.  The space of real degree $d$ polynomials $\mathcal P^d$ is stratified so that its pure strata are cells, labelled by the elements of the poset $\Omega_{\langle d]}$.  This cellular structure in $\mathcal P^d$  is interesting on its own right (see Theorem \ref{th4.1} and Theorem \ref{th4.2}). Moreover,  it helps to understand the \emph{localized} structure of the trajectory spaces $\mathcal T(v)$ for traversally generic fields $v$, the main subject of Theorem \ref{th5.2} and Theorem \ref{th5.3}. 
\end{abstract}

\section{Basics Facts about Boundary Generic and Traversally Generic Vector Fields}

For the reader convenience, we start with  a short review of  few key definitions and lemmas from \cite{K1} and \cite{K2}.
\smallskip

Let $v$ be a vector field on a smooth compact $(n+1)$-manifold $X$ with boundary $\d X$. To achieve some uniformity in our notations, let $\d_0X := X$ and $\d_1X := \d X$.

The vector field $v$ gives rise to a partition $\d_1^+X \cup \d_1^-X $ of the boundary $\d_1X$ into  two sets: the locus $\d_1^+X$, where the field is directed inward of $X$, and  $\d_1^-X$, where it is directed outwards. We assume that $v$, viewed as a section of the quotient  line bundle $T(X)/T(\d X)$ over $\d X$, is transversal to its zero section. This assumption implies that both sets $\d^\pm_1 X$ are compact manifolds which share a common boundary $\d_2X := \d(\d_1^+X) = \d(\d_1^-X)$. Evidently, $\d_2X$ is the locus where $v$ is \emph{tangent} to the boundary $\d_1X$.

Morse has noticed that, for a generic vector field $v$, the tangent locus $\d_2X$ inherits a similar structure in connection to $\d_1^+X$, as $\d _1X$ has in connection to $X$ (see \cite{Mo}). That is, $v$ gives rise to a partition $\d_2^+X \cup \d_2^-X $ of  $\d_2X $ into  two sets: the locus $\d_2^+X$, where the field is directed inward of $\d_1^+X$, and  $\d_2^-X$, where it is directed outward of $\d_1^+X$. Again, let us assume that $v$, viewed as a section of the quotient  line bundle $T(\d_1X)/T(\d_2X)$ over $\d_2X$, is transversal to its zero section.

For generic fields, this structure replicates itself: the cuspidal locus $\d_3X$ is defined as the locus where $v$ is tangent to $\d_2X$; $\d_3X$ is divided into two manifolds, $\d_3^+X$ and $\d_3^-X$. In  $\d_3^+X$, the field is directed inward of $\d_2^+X$, in  $\d_3^-X$, outward of $\d_2^+X$. We can repeat this construction until we reach the zero-dimensional stratum $\d_{n+1}X = \d_{n+1}^+X \cup  \d_{n+1}^-X$. 

These considerations motivate 

\begin{definition}\label{def1.1} 
We say that a smooth field $v$ on $X$ is \emph{boundary generic} if:
\begin{itemize}
\item $v|_{\d X} \neq 0$,
\item $v$, viewed as a section of the tangent bundle $T(X)$, is transversal to its zero section,
\item  for each $j = 1, \dots,  n+1$, the $v$-generated stratum $\d_jX$ is a  smooth submanifold of  $\d_{j-1}X$,
\item  the field  $v$, viewed as section of the quotient 1-bundle  $$T_j^\nu := T(\d_{j-1}X)/ T(\d_jX) \to \d_jX,$$ is transversal to the zero section of $T_j^\nu$ for all $j > 0$. 
\end{itemize}
We denote by $\mathcal V^\dagger(X)$ the space of all boundary generic fields on $X$.
\hfill\qed
\end{definition}

Thus a boundary generic vector field $v$  on $X$  gives rise to two Morse stratifications: 
\begin{eqnarray}\label{eq1.1}
\d X := \d_1X \supset \d_2X \supset \dots \supset \d_{n +1}X, \nonumber \\ 
X := \d_0^+ X \supset \d_1^+X \supset \d_2^+X \supset \dots \supset \d_{n +1}^+X 
\end{eqnarray}
, the first one by closed submanifolds, the second one---by compact ones.  Here $\dim(\d_jX) = \dim(\d_j^+X) = n +1 - j$. For simplicity, the notations ``$\d_j^\pm X$" do not reflect the dependence of these strata on the vector field $v$. When the field varies, we use a more accurate notation ``$\d_j^\pm X(v)$".
\smallskip

When $v$ is nonsingular on $\d_1X$, we can extend it into a larger manifold $\hat X$ so that $\hat X$ properly contains $X$ and the extension $\hat v$ remains nonsingular in in the vicinity of $\d_1X \subset \hat X$. Throughout this text, we treat the pair $(\hat X, \hat v)$ as a germ which extends $(X, v)$.
\smallskip

At each point $x \in \d_1X$, the $(-\hat v)$-flow defines the germ of the projection $p_x: \hat X \to S_x$, where $S_x$ is a local section of the $\hat v$-flow which is transversal to it.  The projection is considered at each point of  $\d_1X \subset \hat X$. When $\hat v$ is a gradient-like field for a function $\hat f: \hat X \to \R$, we can choose  the germ of the hypersurface $f^{-1}(f(x))$ for the role of $S_x$.
\smallskip

For boundary generic vector field $v$, we associate an ordered sequence of  multiplicities with each trajectory $\g$, such that $\g \cap \d_1X$ is a finite set.  In fact, for any $v \in \mathcal V^\dagger(X)$, the intersection $\{\a_i\} := \g \cap \d_1X$ is automatically a finite set. For traversing (see Definition 4.6 from \cite{K1}) generic fields, the points $\{a_i\}$ of the intersection $\g \cap \d_1X$  are ordered by the field-oriented trajectory $\g$, and the index $i$ reflects this ordering.

\begin{definition}\label{def1.2} Let $v \in \mathcal V^\dagger(X)$ be a generic field. Let $\g$ be a  $v$-trajectory which intersects the boundary $\d_1X$ at a finite number of points $\{a_i\}$. Each point $a_i$ belongs to a unique pure stratum $\d_{j_i}X^\circ$. 

The \emph{multiplicity} $m(\g)$ of $\g$ is defined by the formula
\begin{eqnarray}\label{eq1.2}
m(\g) = \sum_i\, j_i
\end{eqnarray}
The \emph{reduced multiplicity} $m'(\g)$ of $\g$ is defined by the formula
\begin{eqnarray}\label{eq1.3}
m'(\g) = \sum_i (j_i -1)
\end{eqnarray}
, and the \emph{virtual multiplicity} $\mu(\g)$ of $\g$ is defined by 
\begin{eqnarray}\label{eq1.4}
\mu(\g) = \sum_i \Big\lceil\frac{j_i}{2}\Big\rceil
\end{eqnarray}
, where $\lceil\sim\rceil$ denotes the integral part function. \hfill\qed
\end{definition}

For an open and dense subspace $\mathcal V^\ddagger(X)$ of $\mathcal V^\dagger(X)$, one can interpret $\mu(\g)$ as the maximal number of tangency points that any trajectory $\g'$ in the vicinity of $\g$ may have (see Theorem 3.4 from \cite{K2}).
\smallskip

When $v \in \mathcal V^\dagger(X)$, each set $\d_jX(v)$ is a manifold.
\smallskip 

Let $\d_jX(v)^\circ := \d_jX(v) \setminus \d_{j+1}X(v)$ denotes the pure Morse stratum.
\smallskip 

The following lemma (see Lemma 3.1 from \cite{K2} and \cite{Morin}) provides us with an analytic description of the Morse strata.

\begin{lemma}\label{lem1.1} Assume that $v$ is a boundary generic field. Denote by $\g_a$ the $\hat v$-trajectory through a point $a \in X$. If $a \in \d_kX(v)^\circ$
then in the vicinity of $a$ in $\hat X$, there exists a coordinate system $(u, x)$, where $u \in\R$ and $x \in \R^n$,  so that: 
\begin{itemize}
\item each $v$-trajectory  $\g$ is defined by an equation $\{x = \vec{const}\}$,
\item the boundary $\d_1X$ is defined by the equation
\begin{eqnarray}\label{eq1.5} 
u^k + \sum_{j =0}^{k -2} \; x_j\, u^j = 0
\end{eqnarray} 
\item each $v$-trajectory $\g$ hits only some strata $\{\d_jX(v)^\circ\}_{j \in J(a)}$ in such a way that  $$\sum_{j \in J(a)} j \leq k, \quad \text{and}\quad \sum_{j \in J(a)} j \equiv k\; (2).$$ \hfill\qed
\end{itemize} 
\end{lemma}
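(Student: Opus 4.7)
\medskip

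\noindent\emph{Proof proposal.} The plan is to combine a flow-box coordinate system for $v$ with the Malgrange (Weierstrass) preparation theorem applied to a defining equation of $\partial_1 X$. Since $v$ is nonsingular in the vicinity of $a$ (we work in the germ $\hat X$, where $a$ is an interior point and $v \ne 0$), I would first choose smooth coordinates $(u, x_1', \dots, x_n')$ centered at $a$ in which $v = \partial/\partial u$. This immediately delivers the first bullet: trajectories are the level sets $\{x' = \vec{const}\}$. Locally, $\partial_1 X$ is a smooth hypersurface cut out by a single function $F(u, x') = 0$ with $F(0,0)=0$.

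Next I would translate the hypothesis $a \in \partial_k X(v)^\circ$ into an analytic statement about $F$. Because $v = \partial/\partial u$, the condition that $v$ be tangent to $\partial_1 X$ to order $j$ at a point of the trajectory is exactly the condition that $F(\cdot, x')$ vanish in $u$ to order $\ge j$ there. Hence the depth-$k$ stratum $\partial_k X(v)^\circ$ corresponds locally to $F(\cdot, 0)$ having a zero of order exactly $k$ at $u = 0$. Malgrange's preparation theorem then factors $F = U\cdot P$ near $0$ with $U$ a smooth unit and
\[
P(u, x') \;=\; u^k + b_{k-1}(x')\,u^{k-1} + \cdots + b_0(x'),\qquad b_j(0)=0.
\]
Replacing $F$ by $P$ does not alter the zero locus, so $\partial_1 X = \{P = 0\}$ locally. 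A Tschirnhaus substitution $u \mapsto u - b_{k-1}(x')/k$, which preserves the form $v = \partial/\partial u$, kills the $u^{k-1}$ term, leaving $u^k + \sum_{j=0}^{k-2} c_j(x')\, u^j = 0$.

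The heart of the matter --- and the main obstacle --- is to show that the map
$\Phi: x' \mapsto (c_0(x'), \dots, c_{k-2}(x'))$
is a submersion at the origin; once this is established, a smooth change of the first $k-1$ coordinates of $x'$ sets $c_j(x') = x_j$ for $j = 0, \dots, k-2$, producing the required normal form \eqref{eq1.5}. The submersivity of $\Phi$ is precisely what is encoded in Definition \ref{def1.1}: the transversality of $v$ to the zero sections of the successive quotient line bundles $T_j^\nu$ translates, via the identification of $\partial_j X$ with the locus where the polynomial $P(\cdot, x')$ has a root of multiplicity $\ge j$, to transversality of $\Phi$ to the discriminant stratification of the space of monic degree-$k$ polynomials at the point with a single root of multiplicity $k$. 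This codimension count forces $\mathrm{rank}\,d\Phi_0 = k-1$. I would carry out the transversality book-keeping by induction on $j$, peeling off one stratum at a time.

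Finally, the third bullet follows by direct inspection of the normal form. In the new coordinates, a trajectory $\{x = \vec{c}\}$ meets $\partial_1 X$ at the real roots of $p_{\vec c}(u) := u^k + \sum_{j=0}^{k-2} c_j u^j$, and a real root of multiplicity $j$ lies in the pure stratum $\partial_j X(v)^\circ$ by the analytic interpretation above. Consequently $\sum_{j \in J(a)} j$ equals the total multiplicity of real roots of $p_{\vec c}$, which is bounded by $\deg p_{\vec c} = k$ and has the same parity as $k$, since the complex roots of $p_{\vec c}$ come in conjugate pairs and contribute an even total multiplicity.
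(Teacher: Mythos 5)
Your proposal is correct and follows essentially the same route as the source the paper relies on for this statement (the lemma is quoted here without proof from Lemma 3.1 of \cite{K2} and from Morin's normal-form theorem): flow-box coordinates in which $v=\partial/\partial u$, Malgrange--Weierstrass preparation of a local defining function $F$ of $\partial_1X$, a Tschirnhaus shift, and the observation that boundary genericity is exactly the statement that $F,\partial_uF,\dots,\partial_u^{k-1}F$ have independent differentials along the strata, which at $a\in\partial_kX(v)^\circ$ is equivalent to your submersivity of $x'\mapsto(c_0,\dots,c_{k-2})$, with the parity/degree count giving the third bullet. The only step you leave as a plan --- the inductive identification $\partial_jX=\{F=\partial_uF=\cdots=\partial_u^{j-1}F=0\}$ and the translation of the $T_j^\nu$-transversality of Definition \ref{def1.1} into the rank condition (note $d(\partial_u^{j-1}F)(v)=\partial_u^jF$ identifies the section of $T(\partial_{j-1}X)/T(\partial_jX)$ with $\partial_u^jF|_{\partial_jX}$) --- is exactly the bookkeeping carried out in the cited proof, so no genuine gap remains.
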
 

The introduction of the next class $\mathcal V^\ddagger(X)$ of vector fields is inspired by the singularity theory of so called \emph{Boardman maps} with normal crossings (see \cite{Bo}, \cite{GG}).
\smallskip

Consider the collection of tangent spaces $\{T_{a_i}(\d_{j_i}X^\circ)\}_i$ to the pure strata $\{\d_{j_i}X^\circ\}_i$ that have a non-empty intersection with a given trajectory $\g$. 
 By Lemma \ref{lem1.1}, each space $T_{a_i}(\d_{j_i}X^\circ)$ is transversal to the curve $\g$. 
 \smallskip 

Let $S$ be a local section of the $\hat v$-flow at some point $a_\star \in \g$ and let $\mathsf T_\star$ be the  space tangent to $S$ at $a_\star$. Each space $T_{a_i}(\d_jX^\circ)$, with the help of the $\hat v$-flow, determines a vector subspace $\mathsf T_i = \mathsf T_i(\g)$ in  $\mathsf T_\star$. It is the image of  the tangent space $T_{a_i}(\d_jX^\circ)$ under the composition of two maps: (1) the differential of the flow-generated diffeomorphism that maps $a_i$ to $a_\star$ and (2) the linear  projection $T_{a_{\star}}(X) \to \mathsf T_\star$ whose kernel is generated by $v(a_\star)$. 

For a traversing $v$ and a majority  of trajectories, we can choose the space $T_{a_\star}(\d_1^+X)$ for the role of $\mathsf T_\star$, where $a_\star$ is the lowest point of $\g \cap \d_1X$. 
\smallskip

The configuration $\{\mathsf T_i\}$ of \emph{affine} subspaces  $\mathsf T_i \subset \mathsf T_\star$ is called \emph{generic} (or \emph{stable}) when all the multiple intersections of spaces from the configuration have the least possible dimensions, consistent with the dimensions of $\{\mathsf T_i\}$. In other words, $$\textup{codim}(\bigcap_{s} \mathsf T_{i_s},  \mathsf T_\star) = \sum_s \textup{codim}(\mathsf T_{i_s},  \mathsf T_\star)$$ for any subcollection $\{\mathsf T_{i_s}\}$ of spaces from the list $\{\mathsf T_i\}$.

Consider the case when $\{\mathsf T_i\}$ are \emph{vector} subspaces of $\mathsf T_\star$.  If we interpret each $\mathsf T_i$ as the kernel of a linear epimorphism $\Phi_i:  \mathsf T_\star \to \R^{n_i}$, then the property of  $\{\mathsf T_i\}$ being generic can be reformulated as the property of  the direct product map $\prod_i \Phi_i:  \mathsf T_\star \to \prod_i  \R^{n_i}$ being an epimorphism.  In particular, for a generic configuration of affine subspaces, if a point  belongs to several  $\mathsf T_i$'s, then the sum of their codimensions $n_i$ does not exceed the dimension of the ambient space $\mathsf T_\star$. 
\smallskip

The definition below resembles the ``Normal Crossing Condition" imposed on Boardman maps between smooth manifolds (see \cite{GG}, page 157, for the relevant definitions). In fact, for  traversing generic fields $v$, the $v$-flow delivers germs of Boardman maps $p(v, \g): \d_1X \to \R^n$, available in the vicinity of every trajectory $\g$. 

\begin{definition}\label{def1.3} We say that a traversing field $v$ on $X$ is \emph{traversally generic} if: 
\begin{itemize}
\item  the field is boundary generic in the sense of Definition \ref{def1.1},
\item for each $v$-trajectory $\g \subset X$ (not a singleton), the collection of  subspaces $\{\mathsf T_i(\g)\}_i$  is generic in $\mathsf T_\star$: that is, the obvious quotient map $\mathsf T_\star \to \prod_i \big(\mathsf T_\star/ \mathsf T_i(\g)\big)$ is surjective.   
\end{itemize}

We denote by $\mathcal V^\ddagger(X)$ the space of all traversally generic fields on $X$. \hfill\qed
\end{definition}

%\begin{remark} 
\noindent{\bf Remark 1.1.} In particular, the second bullet of the definition implies the inequality $$\sum_i \textup{codim}(\mathsf T_i(\g), \mathsf T_\star) \leq \dim(\mathsf T_\star) = n.$$   In other words, for traversally generic fields, the reduced multiplicity of each trajectory $\g$ satisfies the inequality
\begin{eqnarray}\label{eq3.4}
m'(\g) = \sum_i (j_i - 1) \leq n.
\end{eqnarray}
\hfill\qed

The following key lemma (see Lemma 3.4 from \cite{K2}) provides us a a semi-local analytic description of the traversally generic fields.

\begin{lemma}\label{lem1.2} Let  $v$ be a traversing generic field on $X$ and $\g$ its trajectory such that the intersection $\g \cap \d_1X$ is a  union of several points $a_i \in \d_{j_i}X(v)^\circ$. 

Then $\g$ has a $\hat v$-adjusted neighborhood $V$ with a special system of coordinates 
$$(u,\, \underbrace{x_{10}, \dots  , x_{1j_1-2}},\, \dots \, ,\underbrace{x_{i0}, \dots , x_{ij_i - 2}}, \, \dots \, ,\underbrace{x_{p0}, \dots \, x_{pj_p - 2}},\, \underbrace{y_1, \dots, y_{n - m'(\g)}})$$
such that:
\begin{itemize}
\item $\{u = const\}$ defines a transversal section of the $\hat v$-flow,
\item each $\hat v$-trajectory in $V$ is produced by fixing all the coordinates $\{x_{il}\}$ and $\{y_k\}$,
\item there is $\e > 0$ such that $V \cap \d_1X \subset \coprod_i V_i$, where  $V_i :=  u^{-1}((\a_i -\e, \a_i + \e)) \cap V$,  and $\a_i = u(a_i)$,
\item  the intersection  $V \cap \d_1X$ is given by the equation
\begin{eqnarray}\label{eq1.7}
\prod_i \big[(u - \a_i)^{j_i} + \sum_{l=0}^{j_i - 2} x_{i, l} (u - \a_i)^l\big]  = 0. 
\end{eqnarray}
 \hfill\qed
\end{itemize}
\end{lemma}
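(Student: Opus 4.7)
The plan is threefold: extract local polynomial normal forms at each tangency point via Lemma \ref{lem1.1}, stitch their essential coordinates into a single chart on a transversal section by invoking the traversal genericity hypothesis, and then verify that the defining equation for $\d_1X$ in the resulting chart factors as in (\ref{eq1.7}) by separating the tangency points along the common flow coordinate.

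For Step 1, apply Lemma \ref{lem1.1} at each $a_i \in \d_{j_i}X(v)^\circ$: there is a neighborhood $U_i$ of $a_i$ in $\hat X$ and coordinates $(u, x_{i,0}, \dots, x_{i,j_i-2}, y^{(i)})$ with $y^{(i)} \in \R^{n-j_i+1}$, in which $\hat v$-trajectories are the level sets of the non-$u$ coordinates and $\d_1 X \cap U_i$ is the zero set of $(u - \a_i)^{j_i} + \sum_{l=0}^{j_i-2} x_{i,l}(u - \a_i)^l$, with $\a_i := u(a_i)$. Since $v$ is traversing and boundary generic, $\g$ is a regular $\hat v$-orbit in $\hat X$, so we may choose $u$ to be a single flow-box coordinate along $\g$ common to all the $U_i$, with the $\a_i$ distinct.

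For Step 2, transport the essential coordinates $\{x_{i,l}\}_{l=0}^{j_i-2}$ to the section $S_\star$ at $a_\star$ using the $\hat v$-flow composed with the linear projection along $v(a_\star)$. The model of Lemma \ref{lem1.1} presents $\d_{j_i}X^\circ$ near $a_i$ as the common zero locus $\{u = 0,\ x_{i,0} = \dots = x_{i,j_i-2} = 0\}$, so the differentials $dx_{i,0},\dots,dx_{i,j_i-2}$ at $a_\star$ form a basis of the conormal of $\mathsf T_i(\g)$ inside $\mathsf T_\star$. Definition \ref{def1.3} is exactly the assertion that $\mathsf T_\star \to \prod_i \mathsf T_\star/\mathsf T_i(\g)$ is surjective, equivalently that the full union $\{dx_{i,l}\}_{i,l}$ is linearly independent in $\mathsf T_\star^\vee$. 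Since $\sum_i(j_i-1) = m'(\g) \le n$ by Remark 1.1, we complete to a coordinate system on $S_\star$ by choosing $(y_1, \dots, y_{n-m'(\g)})$ annihilating all the $dx_{i,l}$; extending these to a tubular neighborhood $V$ of $\g$ as $\hat v$-invariant functions yields the required chart.

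For Step 3, pick $\e > 0$ smaller than $\tfrac{1}{2}\min_{i \ne i'}|\a_i - \a_{i'}|$ and small enough that $V \cap \d_1 X \subset \coprod_i V_i$ with $V_i := u^{-1}((\a_i - \e, \a_i + \e)) \cap V$. On each $V_i$, every factor of (\ref{eq1.7}) indexed by $i' \ne i$ is bounded away from zero (after shrinking $V$ in the transverse directions), so the zero set of the product restricted to $V_i$ agrees with that of the $i$-th factor, which by Step 1 is $\d_1X \cap V_i$; outside $\coprod_i V_i$ no factor vanishes. The main obstacle is the linear-independence claim in Step 2: the $x_{i,l}$ are produced separately at each $a_i$, and their flow-transported differentials at $a_\star$ could a priori acquire relations. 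The decisive input is the identification of the span of $\{dx_{i,l}\}_l$ at $a_\star$ with the full conormal of $\mathsf T_i(\g)$, drawn from the analytic model of Lemma \ref{lem1.1}; with that identification in hand, Definition \ref{def1.3} delivers joint independence and the product form (\ref{eq1.7}) becomes available.
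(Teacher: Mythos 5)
The paper does not prove this lemma; it is quoted verbatim as Lemma~3.4 of reference \cite{K2} (hence the \qed already attached to the statement). Your reconstruction has the right shape, and Steps~2 and~3 are essentially sound: transporting the essential coordinates to a common transversal, identifying $\{dx_{i,l}\}_{l}$ at $a_\star$ with the conormal of $\mathsf T_i(\g)$ in $\mathsf T_\star$, invoking Definition~\ref{def1.3} to get joint linear independence of the full collection, and completing to a chart propagated along the flow is exactly the right way to exploit traversal genericity.

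The gap is in Step~1, in the unjustified assertion that a single flow-box coordinate $u$ can be chosen so that the \emph{depressed} normal form of Lemma~\ref{lem1.1} holds at every $a_i$ simultaneously. The absence of the $(u-\a_i)^{j_i-1}$ term in the $i$-th factor is equivalent to the intersection points of each nearby trajectory with $\d_1X$ near $a_i$ being balanced about $u=\a_i$, and that is a nontrivial constraint on the coordinate $u$. For a fixed flow coordinate $t$, Weierstrass preparation near $a_i$ gives $(t-\a_i)^{j_i}+c_{i,j_i-1}(x)(t-\a_i)^{j_i-1}+\cdots$ with $c_{i,j_i-1}(0)=0$ but not identically zero, and the depressed form is only reached after the $x$-dependent local shift $t\mapsto t+c_{i,j_i-1}(x)/j_i$; these local shifts differ from one $a_i$ to another, so no single global choice of $u$ as written gives the product of depressed polynomials, and the phrase ``which by Step~1 is $\d_1X\cap V_i$'' in your Step~3 does not follow. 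The missing construction --- and the real technical content of the lemma beyond the linear-algebra Step~2 --- is to build $u$ by gluing the local balancing shifts: set $u=t+c_{i,j_i-1}(x)/j_i$ on each $V_i$ and interpolate to $u=t$ on the complementary flow intervals with cutoff functions (monotone in $t$ since the shifts vanish at $x=0$), and only then read off the $x_{i,l}$ as the Weierstrass coefficients of the defining function of $\d_1X$ relative to this adjusted $u$; your Step~2 argument then applies to those coefficients.
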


%%%%%%%
\section{Bifurcations of the Real Polynomial Divisors --- the Combinatorics of Tangency for Traversally Generic Flows}
%%%%%%

In \cite{K2}, we have seen evidence that, for smooth traversally generic fields $v$ on a compact manifold $X$, the combinatorial patterns of tangency along $v$-trajectories resemble the combinatorics of divisors in $\R$, produced by real polynomials of an even degree $d \leq 2\, dim(X)$ (see Lemma 3.4 , Theorems 3.1, and Theorem 3.5 from  \cite{K2}). Now we are going to devote time to somewhat involved investigations of this combinatorics.
\smallskip

For any  polynomial $P(z)$  with real coefficients, we denote by $D_\R (P)$ its divisor in $\R$, and by $D_\C (P)$ its complex conjugation-invariant divisor in $\C$. Points in $\sup(D_\R (P))$, the support of $D_\R (P)$,  inherit the natural order from $\R$. The set $\sup(D_\C (P)) \subset \C$ is invariant under the complex conjugation. 

Let $\mathcal D_d$ denote the space of all divisors $D$ in $\R$ of degree $|D| = d$. The space $\mathcal D_d$ can be identified with %the appropriate 0-divisors of degree $d$ in the real projective space $\R P^1 \approx S^1$. In other words, $\mathcal D_d$ is a subspace of the $d$-th \mathsf{Sym}metric power $\mathsf{Sym}^d(S^1)$ of the circle $S^1$. ????
the $d$-th symmetric power $\mathsf{Sym}^d(\R)$ of the real number line $\R$. Alternatively,  $\mathcal D_d$ can be introduced as the domain $\Pi_d$ in $\R^d$ given by the inequalities $x_1 \leq x_2 \leq \dots \leq x_d$ imposed on the coordinates $(x_1, x_2, \dots , x_d)$.

The divisors $D \in \mathcal D_d$ have  \emph{combinatorial models} represented by maps $$\omega_D: \{1, 2, 3, \dots \} \to \{0, 1, 2, 3, \dots \}$$, where $\omega_D(i) \geq 1$ is the multiplicity of the $i$-th point in $\sup(D)$, so that $\sum_i \omega_D(i) = d$. 

Let $\N$ be the set of all natural numbers and $\Z_+$ the set of all non-negative integers. Consider the set $\Omega$ of all maps $\omega: \N \to \Z_+$ with finite support and such that $\omega(i) \neq 0$ implies $\omega(j) \neq 0$ for all $j < i$. \smallskip

Consider a real polynomial $P$ of degree $d$ and its complex divisor $D_\C(P)$. There is $\e > 0$ such that the $\e$-neighborhood $U_\e$ of the support $\sup(D_\C(P)) \subset \C$  is a union of \emph{disjoint}  open disks. Then any real $d$-polynomial $Q$, sufficiently close to $P$, will have all its roots residing in $U_\e$. 

The radial contraction of each disk from $U_\e$ to its center commutes with the complex conjugation in $\C$ and defines a conjugation-equivariant deformation of the divisor $D_\C(Q)$ into the divisor $D_\C(P)$.  This produces a deformation of $Q$ to $P$, a curve $Q_s$, $s \in [0, 1]$, in the space of real polynomials of degree $d$. At all stages of  this deformation, but the last one ($s = 1$), the divisor $D_\C(Q_s)$  has the same multiplicity pattern as the one of $D_\C(Q)$; moreover, the divisor $D_\R(Q_s)$  has the same $\R$-ordered multiplicity pattern as the one of $D_\R(Q)$. 

Now imagine this deformation process from the viewpoint of an observer residing in $\R$, so that morphings of all the roots residing in $\C \setminus \R$ are ``invisible".  

Deformations of $P$ within the space of real polynomials of degree $\deg(P)$ change  its real divisor $D_\R(P)$ by sequences of two \emph{elementary operations} and their inverses: 
\begin{enumerate}
\item merging of two adjacent points from the support $\sup(D_\R (P))$ (their multiplicities add up); 
\item inserting a point of multiplicity 2 to the set $\R \setminus \sup(D_\R (P))$.
\end{enumerate}
The second elementary operation corresponds  to a pair of simple complex-conjugate roots merging at a point of $\R \subset \C$. 

These operations have combinatorial analogues. We define the elementary merge operation $\mathsf M_j: \Omega \to \Omega$ %$\stackrel{j}\wedge: \Omega \to \Omega$ 
by the formula
\begin{eqnarray}\label{eq2.1} 
\mathsf M_j(\omega)(i) & = & \omega(i) \; \;\text{for \;all} \; i < j,\\ \nonumber
\mathsf M_j(\omega)(j)  & = & \omega(j) + \omega(j+1),\\ \nonumber
\mathsf M_j(\omega)(i)  & = & \omega(i + 1) \;\; \text{for \;all} \; i > j +1,
\end{eqnarray}
where $\omega \in \Omega$ and $1 \leq j \leq |\sup(\omega)|$. 

Define the elementary insert operation $\mathsf I_j: \Omega \to \Omega$ by the formula
\begin{eqnarray}\label{eq2.2}  
\mathsf I_j(\omega)(i) & = & \omega(i) \; \;\text{for \;all} \; i < j, \\ \nonumber
\mathsf I_j(\omega)(j) & = & 2,\\ \nonumber
\mathsf I_j(\omega)(i) & = & \omega(i - 1) \;\; \text{for \;all} \; i > j \geq 1, 
\end{eqnarray}
and 
\begin{eqnarray}\label{eq2.3}  
\mathsf I_0(\omega)(1) & = & 2 \\ \nonumber
\mathsf I_0(\omega)(i) & = & \omega(i -1), \text{for \;all} \; i > 1.
\end{eqnarray}

We also allow for the merge of several adjacent points in $\sup(D_\R (P))$ and for the merge of groups of conjugate roots (of any multiplicity) at a point(s) of $\R$. However, we tend to view these  morphings as compositions of the elementary operations $\{\mathsf M_j\}$ and $\{\mathsf I_j\}$.  

Note that the \emph{parity} of the degree $\deg(D_\R (P))$ is preserved under the merge and insert  operations. 
\smallskip

Guided by Definition \ref{def1.2}, we introduce combinatorial analogues of the quantities from that definition:
\begin{itemize}
\item the $l_1$-\emph{norm} $| \omega | $ of $\omega$ by the formula $\sum_i \omega(i)$,\footnote{This number  represents $\deg(D_\R(P))$.} 
\item the \emph{reduced norm} $|\omega |'$ of $\omega$, by the formula  $\sum_i (\omega(i) -1)$,
\item the \emph{virtual multiplicity} $\mu(\omega)$ of $\omega$,  by the formula $\sum_i \lceil \omega(i)/2 \rceil$, where $\lceil \sim \rceil$ denotes the integral part of a real number.
\end{itemize}
\smallskip

Now  we are in position to define, via the merge and insert operations, a \emph{partial order} "$\succ$" in the set $\Omega$: 

\begin{definition}\label{def2.1} 
For $\omega_1, \omega_2 \in \Omega$, we write $\omega_1 \succ \omega_2$ if $\omega_2$ cam be obtained from $\omega_1$ by a sequence of  merge  operations $\{\mathsf M_j\}$ and insert operations $\{\mathsf I_j\}$ as in (\ref{eq2.1})-(\ref{eq2.3}). \hfill\qed
\end{definition}

Let  $\Omega_{d} \subset \Omega$ denote the set of all $\omega$'s such that $|\omega| = d$. Let  $\Omega_{\langle d\,]} \subset \Omega$ denote the set of all $\omega$'s such that $|\omega| \leq d$ and $|\omega| \equiv d \; (2)$. The set $\Omega_{\langle d\,]}$ inherits its partial order from the ambient poset $(\Omega, \succ)$. 

For all our applications we will  need only the case of an even $d$.
\smallskip

We can give an interpretation to the poset $(\Omega, \succ)$ in the spirit of category theory. In this interpretation, the elements of $\Omega$ become objects of some category $\mathbf \Omega$ so that the relation $\omega_1 \succ \omega_2$ is transformed into the property $Mor(\omega_1, \omega_2) \neq \emptyset$. 
%Eventually, this categorical viewpoint will allow us to associate a simplicial space $\D(\Omega)$ to the poset $(\Omega, \succ)$ and a simplicial space $\D(\mathbf \Omega)$ to the category $\mathbf \Omega$. 
\smallskip

The following definition mimics the map of conjugation-invariant divisors $D_\C(Q) \to D_\C(P)$, induced by the radial retraction of the $\e$-neighborhood $U_\e(\sup(D_\C(P)))$ to its core, $\sup(D_\C(P))$, as observed from within $\R \subset \C$. 

\begin{definition}\label{def2.2}
For any two elements $\omega_1, \omega_2 \in \Omega$ we define the set $Mor(\omega_1, \omega_2)$ as the set of maps $\a: \sup(\omega_1) \to \sup(\omega_2)$ such that: 
\begin{enumerate}
\item for each pair $i < i'$ in $\sup(\omega_1)$,  $\a(i) \leq \a(i')$, 
\item $\sum_{j \in \a^{-1}(i)}\, \omega_1(j) \leq \omega_2(i)$ for all $i \in \sup(\omega_2)$\footnote{This restriction is vacuous when $\a^{-1}(i) = \emptyset$.},
\item $\sum_{j \in \a^{-1}(i)}\, \omega_1(j) \equiv \omega_2(i)\; \mod (2)$ for all $i \in \sup(\omega_2)$\footnote{In particular, if $\a^{-1}(i) = \emptyset$, then $\omega_2(i)  \equiv 0\, \mod(2).$}. \hfill\qed
\end{enumerate}
 
\end{definition}

For example, $Mor((121), (11) = \emptyset$, while  $Mor((11), (121))$ consists a single injective map.   
\smallskip

Examining properties $(1)$-$(3)$ above, we see that there is a natural pairing $$Mor(\omega_1, \omega_2) \times Mor(\omega_2, \omega_3) \to Mor(\omega_1, \omega_3)$$ defined by the composition of maps, provided $Mor(\omega_1, \omega_2) \neq \emptyset$ and $Mor(\omega_2, \omega_3) \neq \emptyset$.

\begin{lemma}\label{lem2.1} In the category $\mathbf \Omega$, the set $Mor(\omega_1, \omega_2) \neq \emptyset$, if and only if, $\omega_1 \succeq \omega_2$ in the poset $(\Omega, \succ)$.
\end{lemma}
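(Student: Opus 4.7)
The plan is to prove the two implications separately. For $\omega_1 \succeq \omega_2 \Rightarrow Mor(\omega_1,\omega_2) \ne \emptyset$, I would attach a canonical morphism to each elementary operation and then invoke the composition pairing noted just above the lemma. For a merge $\mathsf M_j$ the map $\alpha \colon \sup(\omega) \to \sup(\mathsf M_j(\omega))$ defined by $i \mapsto i$ for $i \le j$, $j+1 \mapsto j$, and $i \mapsto i-1$ for $i > j+1$ has every fiber a singleton except the fiber over $j$, which carries total multiplicity exactly $\omega(j)+\omega(j+1) = \mathsf M_j(\omega)(j)$, so conditions (1)--(3) of Definition \ref{def2.2} hold with equality. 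For an insert $\mathsf I_j$ ($j\ge 1$) the map $i \mapsto i$ for $i < j$, $i \mapsto i+1$ for $i \ge j$ has empty fiber over the new position $j$, where the value is $2 \equiv 0 \pmod{2}$; the $\mathsf I_0$ case is analogous. Composing along any chain $\omega_1 \to \cdots \to \omega_2$ then produces the required morphism.

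\textbf{Converse.} Given $\alpha \in Mor(\omega_1,\omega_2)$, I would explicitly build a sequence of elementary operations by reading off $\alpha$. Order-preservation forces each fiber $F_i := \alpha^{-1}(i)$ to be a consecutive (possibly empty) block in $\sup(\omega_1)$, and these blocks partition $\sup(\omega_1)$. Setting $s(i) := \sum_{j \in F_i} \omega_1(j)$ and $r(i) := (\omega_2(i) - s(i))/2$, conditions (2) and (3) guarantee $r(i) \in \Z_+$. In Stage~1 (inserts), for each $i \in \sup(\omega_2)$ I apply $r(i)$ elementary insertions placing fresh multiplicity-$2$ points immediately adjacent to the block $F_i$; when $F_i = \emptyset$ condition (3) forces $\omega_2(i)$ to be even and positive, so $r(i) \ge 1$ and the insertion lands in a well-defined slot. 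This yields an intermediate $\omega_1' \in \Omega$ whose support is partitioned into non-empty blocks $F_i' \supseteq F_i$ of size $n(i) := |F_i| + r(i)$ and total multiplicity $s(i) + 2r(i) = \omega_2(i)$. In Stage~2 (merges), I collapse each $F_i'$ with $n(i) \ge 2$ by $n(i)-1$ successive $\mathsf M$-operations; the result is visibly $\omega_2$.

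\textbf{Main obstacle.} The bookkeeping in Stage~1 is the delicate point: one must track how global indices shift after each insertion so that every new multiplicity-$2$ point really lands inside (or next to) the intended block $F_i$, and processing the blocks from right to left avoids confusion. Once $\omega_1'$ is correctly pinned down, Stage~2 is purely mechanical. The argument ultimately rests on the exact parity condition (3) of Definition \ref{def2.2}, which guarantees that each defect $\omega_2(i) - s(i)$ is even and hence fillable by a collection of multiplicity-$2$ inserts.
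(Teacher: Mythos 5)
Your proposal follows the same two-part strategy as the paper: in the forward direction, it attaches to each elementary operation $\mathsf M_j$, $\mathsf I_j$ a canonical monotone map satisfying conditions (1)--(3) and composes along the chain; in the converse, it reconstructs the chain fiberwise, first inserting $(\omega_2(i)-s(i))/2$ multiplicity-$2$ points near $\alpha^{-1}(i)$ (the paper's $a_i/2$) and then merging each enlarged block. This matches the paper's argument essentially step for step, down to the observation that the parity condition (3) is what makes the insert stage possible.
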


\begin{proof} Any elementary operation $\mathsf M_j$ in (\ref{eq2.1}) gives rise to an element  $\mu_j \in Mor(\omega, \mathsf M_j(\omega))$ which maps the pair $j, j+1 \in \sup(\omega)$  to the single element $j \in \sup(\mathsf M_j(\omega))$; the rest of the elements in $\sup(\omega)$ are mapped bijectively by $\mu_j$. Evidently, the properties $(1)$-$(3)$ in Definition \ref{def2.2} are satisfied. Similarly, any elementary operation $\mathsf I_j$ in  (\ref{eq2.2}) gives rise to an element  $\nu_j \in Mor(\omega, \mathsf I_j(\omega))$ which maps $\sup(\omega)$ bijectively and in a monotone fashion to $\sup(\mathsf I_j(\omega))$ so that $j \in \sup(\mathsf I_j(\omega))$ is the only element that is not in the image of $\nu_j$. Again, the properties $(1)$-$(3)$ in Definition \ref{def2.2} are satisfied. 

Therefore if two elements, $\omega_1$ and $\omega_2$, are linked by a sequence of elementary operations of the types $\mathsf M_j$ and $\mathsf I_k$, then there exists an element  $\a \in Mor(\omega_1, \omega_2)$ which is obtained by composing the chain of corresponding maps $\mu_j$ and $\nu_k$. Hence,  $Mor(\omega_1, \omega_2) \neq \emptyset$.
\smallskip

On the other hand, if $Mor(\omega_1, \omega_2) \neq \emptyset$, then any  $\a \in Mor(\omega_1, \omega_2)$ can be obtained in such a way via elementary operations. Indeed, for each $i \in \sup(\omega_2)$, consider the set $\a^{-1}(i)$. Put $$a_i = \omega_2(i) - \sum_{j \in \a^{-1}(i)}\, \omega_1(j).$$ By Definition \ref{def2.2}, $a_i \geq 0$ and $\a_i \equiv 0\; \mod(2)$. We apply a sequence of $a_i/2$ insert operations $\mathsf I_k$ to $\omega_1$, localized to the set $\a^{-1}(i)$. They will add $a_i/2$ copies of $2$'s to the sequence $\omega_1$. The location of these $2$'s relative to the elements of $\a^{-1}(i)$ is unimportant. The resulting $\omega'_1$ now has the property $Mor(\omega_1, \omega'_1) \neq \emptyset$. These insertions $\{\mathsf I_k\}$ define a map $\g \in Mor(\omega_1, \omega'_1)$.  Next we merge all the original elements of the set $\a^{-1}(i)$ and the locations of newly inserted $2$'s together into a singleton $i_\star$ by a sequence of elementary merges $\{\mathsf M_j\}$. Again,  the order in which the elementary merges are performed is unimportant. The resulting $\omega''_1$ is such that there exists $\delta \in  Mor(\omega_1, \omega''_1)$ with the property $$\sum_{j \in \delta^{-1}(i_\star)}\, \omega_1(j) = \omega''_1(i_\star) = \omega_2(i).$$

Finally, we apply this procedure to every element $i \in \sup(\omega_2)$. The result of these operations transforms $\omega_1$ into $\omega_2$ by a sequence of elementary operations.  Therefore, $\omega_1 \succ \omega_2$ in the poset $\Omega$.
\end{proof}
 \smallskip

The polynomial inequality $P(z) \leq 0$ splits the support $\sup(D_\R(P))$ into a number of disjoint sets: each set is formed by the maximal string of consecutive roots so that, in the closed interval bounded by the maximal and the minimal root from the string, the inequality  $P(z) \leq 0$ is valid.  For a polynomial of an even degree, each maximal string of roots either is a singleton whose multiplicity is even, or a sequence whose maximal and minimal elements have odd multiplicities, while the rest of roots have even multiplicities. These observations motivate the following combinatorial models.
\begin{definition}\label{def2.3}
Let  $\Omega^\bullet$ denote the set of maps $\omega \in \Omega$ that satisfy the following properties:
\begin{itemize}
\item either
\begin{enumerate}
\item $\omega(1)$, $\omega(q)$ are odd numbers, where $q = |\sup(\omega)|$, and
\item $\omega(i)$ is even for $1 < i < q$;
\end{enumerate}
\item or  $\sup(\omega) = \{1\}$, and $\omega(1) \equiv 0\; \mod(2)$.  \hfill\qed
\end{itemize} 
\end{definition}
A priori $q$,  the cardinality of the support of $\omega \in \Omega^\bullet$, is not fixed.
\begin{definition}\label{def2.4}
Let  $\Omega^\bullet_{'d}$ denote the set of maps $\omega \in \Omega^\bullet$ such that $|\omega|' := \sum_i (\omega(i) - 1) = d$. 

Let $$\Omega^\bullet_{'[k, d]}  := \coprod_{k \leq j \leq d} \Omega^\bullet_{'j}.$$ We also will use the shorter notation ``$\Omega^\bullet_{'\langle d]}$" for the set $\Omega^\bullet_{'[0, d]}$.  \hfill\qed
\end{definition}
%; the  properties $(1)$, $(2)$ in Definition 7.2 REF imply that $q \leq d + 2$.
%\smallskip

Recall that on a $(n+1)$-manifold $X$, any  trajectory $\g$ of a boundary generic field $v \in \mathcal V^\dagger(X)$ gives rise to an element $\omega_\g \in \Omega^\bullet$  according to the rule:  $\omega_\g(i) = m(a_i)$, the multiplicity of the $i$-th point $a_i$ in the $v$-ordered set $\g \cap \d_1X$\footnote{that is, $a_i \in \d_{m(a_i)}X(v)^\circ$.}. By Theorem 3.5 from \cite{K2}, for any traversally generic field $v \in \mathcal V^\ddagger(X)$, we get $|\omega_\g|'  \leq n = \dim(\d_1X)$, so that  $\omega_\g \in \Omega^\bullet_{'\langle n]}$.
\smallskip

In the end of the proof of Theorem 3.5 from \cite{K2}, we have established the following proposition: 

\begin{lemma}\label{lem2.2}
For $\omega \in  \Omega^\bullet_{'\langle n\,]}$, we have  $|\omega| := \sum_i \omega_\g(i) \leq 2n + 2$. \hfill\qed
\end{lemma}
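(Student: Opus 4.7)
The key identity to exploit is
\[
|\omega| \;=\; |\omega|' + q, \qquad \text{where } q := |\mathrm{sup}(\omega)|,
\]
which holds because $|\omega|' = \sum_i(\omega(i)-1) = |\omega| - q$. Since we are given $|\omega|' \leq n$, a bound $|\omega| \leq 2n+2$ will follow immediately from a bound $q \leq n+2$. My plan is therefore to extract a lower bound on $|\omega|'$ in terms of $q$, using the structural constraints that define $\Omega^\bullet$.

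First I would dispose of the trivial case where $\sup(\omega) = \{1\}$. Here Definition \ref{def2.3} forces $\omega(1)$ to be even, and the hypothesis $|\omega|' = \omega(1) - 1 \leq n$ gives $|\omega| = \omega(1) \leq n+1 \leq 2n+2$.

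In the remaining case $q \geq 2$, the definition of $\Omega^\bullet$ tells us that $\omega(1)$ and $\omega(q)$ are odd (hence $\geq 1$), while for every interior index $1 < i < q$ the value $\omega(i)$ is a positive even number, hence $\omega(i) \geq 2$. Consequently the endpoint summands in $|\omega|'$ contribute $\omega(1)-1 \geq 0$ and $\omega(q)-1 \geq 0$, while each of the $q-2$ interior summands contributes $\omega(i)-1 \geq 1$. Adding these inequalities yields
\[
|\omega|' \;\geq\; q - 2,
\]
and combined with the assumption $|\omega|' \leq n$ this gives $q \leq n+2$. Plugging into $|\omega| = |\omega|' + q$ produces $|\omega| \leq n + (n+2) = 2n+2$, as required.

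There is essentially no obstacle: the whole argument rests on the simple observation that the parity restrictions in Definition \ref{def2.3} force every interior multiplicity to be at least $2$, which pumps up the reduced norm by one per interior support point. Nothing about the merge/insert structure, the category $\mathbf{\Omega}$, or the geometric origin via trajectories needs to be invoked; the lemma is a purely counting statement about maps in $\Omega^\bullet$ subject to the reduced-norm bound.
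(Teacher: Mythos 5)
Your proof is correct. Since the paper itself does not reprove Lemma \ref{lem2.2} but merely cites the end of the proof of Theorem 3.5 in [K2], there is no in-text argument to compare against; nevertheless the argument you give is the natural counting one, and it is essentially what the parity constraints in Definition \ref{def2.3} are designed to make work. Writing $|\omega| = |\omega|' + q$ with $q = |\sup(\omega)|$, the interior values being even (hence $\geq 2$) gives $|\omega|' \geq q-2$, which combined with $|\omega|' \leq n$ yields $q \leq n+2$ and thus $|\omega| \leq 2n+2$. Your treatment of the singleton case is also correct. One trivial case you may wish to note explicitly for completeness is $\sup(\omega) = \emptyset$ (the zero map), where $|\omega| = 0$ trivially; otherwise the argument is airtight and requires none of the merge/insert or categorical machinery, exactly as you observe.
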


%\begin{proof}
%Let us validate this inequality. Among all $\omega$'s with $|\omega| = l$, the maximal cardinality $q_\star$ of the $\omega$-support is delivered by $\omega_\star$ such that $\omega_\star(1) = \omega_\star(q_\star) = 1$ and $\omega_\star(i) = 2$ for all $1 < i < q_\star$. Therefore,  we get $q := |supp(\omega)| \leq q_\star  =  (l+2)/2$. On the other hand, since $|\omega| = |\omega|' + q$, we get $l \leq n+ q  \leq n + (l+2)/2$, which leads to $l \leq 2(n+1)$.

%When $v \in \mathcal V^\dagger(X)$ and $\g$ is a singleton, its combinatorial description is provided by an $\omega$ with the  support at $1$ and such that $\omega(1) \leq n + 1$ and  $\omega(1) \equiv 0 \; (2)$. For such an $\omega$, we still have $|\omega|' \leq n$.
%\end{proof}

We are going to introduce a partial order among the elements of the set $\Omega^\bullet_{' \langle d\,]}$, which will match the changing geometry of orbits for traversally generic fields. Crudely, the order is induced from the ambient poset $\Omega \supset \Omega^\bullet_{' \langle d\,]}$, but then enhanced. For a more accurate description, we turn to few auxiliary combinatorial constructions.     

Given any $\omega \in \Omega$, we would like to ``chop it"  into a number of ``strings" and ``atoms":  each string belongs to some  
$\Omega^\bullet_{'d}$ as in the first bullet of the Definition \ref{def2.3}, while each atom has a singleton for its support and takes there an even value (see Fig. 1). Prior to defining this canonical deconstruction $\Xi(\omega)$ of $\omega$,  we need to introduce some notations. 

Let $\omega \in \Omega$ be such that $|\omega| \equiv 0 \; (2)$.  Denote by $\sup_{odd}(\omega)$ the  points $l \in \N$ in the support of $\omega$ such that $\omega(l) \equiv 1\; (2)$ and by $\sup_{ev}(\omega)$ the  points $l$ in the support of $\omega$ such that $\omega(l) \equiv 0\; (2)$.  In the geometrical context of traversing flows, the number  $|\sup_{odd}(\omega)|$ is even. We count the elements of $\sup_{odd}(\omega)$ as they appear in the list; some of them acquire odd numerals, others acquire even ones. A \emph{string} in $A \subset \sup(\omega)$ is formed by all the elements that are bounded on the left by an element of  $i \in \sup_{odd}(\omega)$ with an odd numeral and on the right by the next element $j \in \sup_{odd}(\omega)$ (it has an even numeral attached to it). In other words, a string $A := [i, j]$ is formed by such $i, j$ and all the elements from $\sup_{ev}(\omega)$ that lie in-between. The  points from $\sup_{ev}(\omega)$ that  do not belong to any string are called ``\emph{atoms}". We can compute the values of $\omega$ at the elements of each string $A$ as well as at each atomic support. This gives rise to a unique ordered sequence $\Xi(\omega)$ of strings,  interrupted by a number of atoms.

\begin{figure}[ht]
\centerline{\includegraphics[height=0.8in,width=2.8in]{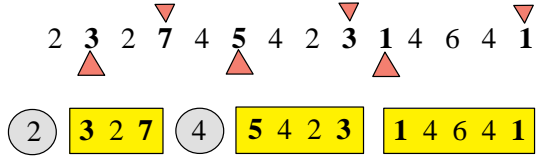}}
\bigskip
\caption{\small{An example of the deconstruction $\omega \Rightarrow \Xi(\omega)$. The strings are boxed, the atoms are circled.}}
\end{figure}

\begin{definition}\label{def2.5} We define the partial order ``$\prec_\bullet$" in $\Omega^\bullet$ as follows:  for $\omega_1, \omega_2 \in \Omega^\bullet$, we write ``$\omega_1\, \prec_\bullet \, \omega_2$" if $\omega_2$ occurs as a string or an atom from the list $\Xi(\omega)$ for some $\omega \succ \omega_1$, where $\omega \in \Omega$ and the last ordering is considered in the poset $(\Omega, \succ)$.  \hfill\qed
\end{definition}

\noindent{\bf Example 2.1.} Consider the poset $(\Omega^\bullet_{' [0, 3]}, \prec_\bullet )$ shown in Fig. 2. Then $(1,4,1) \prec_\bullet (3,1)$ since $(3, 1)$ is present as a  string in $(1, 1, 3, 1) \succ (1, 4, 1)$, the order "$\succ$"  being the one from the poset $\Omega_{[0, 8]}$. \hfill\qed
\smallskip

\begin{figure}[ht]
\centerline{\includegraphics[height=1.8in,width=3in]{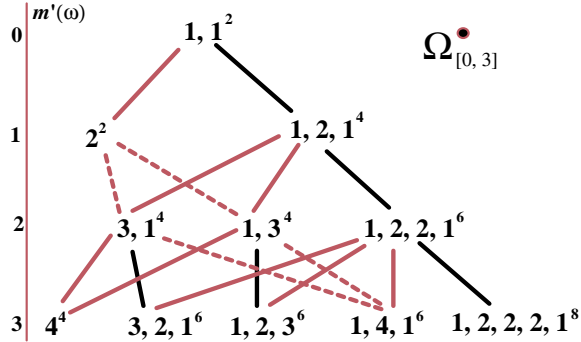}}
\bigskip
\caption{\small{The poset $(\Omega^\bullet_{' [0, 3]}, \prec_\bullet )$ with the "height" function $m'(\omega) := |\omega|'$. The bold dark lines indicate the insert operations, the lighter bold lines the merge operations. The dotted lines represent the order relations, as in Definition \ref{def2.5}, that are not directly induced from the poset $\Omega_{[0, 8]} \supset \Omega^\bullet_{' [0, 3]}$.  The upper index next to $\omega$ shows the value of the norm $|\omega|$.}}
\end{figure}

\begin{lemma}\label{lem2.3} For any $d$, the sub-poset of $\Omega^\bullet_{'\langle d]}$, defined by the constraint $|\omega|' < d$, is canonically isomorphic to the poset   $\Omega^\bullet_{'\langle d-1]}$.  As a result, the direct limit $$\lim_{d \to +\infty} \Omega^\bullet_{'\langle d]} = \Omega^\bullet$$ as posets.
\end{lemma}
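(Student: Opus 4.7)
The plan is to recognize both halves of the lemma as essentially formal consequences of the fact that Definition \ref{def2.5} defines $\prec_\bullet$ uniformly on all of $\Omega^\bullet$, with no dependence on a bound $d$. First I will verify the set-level identity $\{\omega \in \Omega^\bullet_{'\langle d]} : |\omega|' < d\} = \Omega^\bullet_{'\langle d-1]}$. By Definition \ref{def2.4}, $\Omega^\bullet_{'\langle d]} = \{\omega \in \Omega^\bullet : |\omega|' \leq d\}$, so the subset cut out by $|\omega|' < d$ is $\{\omega \in \Omega^\bullet : |\omega|' \leq d-1\}$, which is $\Omega^\bullet_{'\langle d-1]}$ on the nose.

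Next I will check that the partial orders agree. The crucial observation is that Definition \ref{def2.5} places no restriction on the intermediate $\omega \in \Omega$ that witnesses $\omega_1 \prec_\bullet \omega_2$; in particular, $\omega$ is free to have arbitrarily large $|\omega|$ or $|\omega|'$. Therefore both $\Omega^\bullet_{'\langle d]}$ and $\Omega^\bullet_{'\langle d-1]}$ carry the order inherited as sub-posets of $(\Omega^\bullet, \prec_\bullet)$, and the sub-poset of the former defined by $|\omega|' < d$ is tautologically identical, as a poset, to the latter. The canonical isomorphism is the identity inclusion.

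For the direct limit statement, the chain $\Omega^\bullet_{'\langle 0]} \subset \Omega^\bullet_{'\langle 1]} \subset \cdots$ consists of sub-posets of $\Omega^\bullet$, each an order-embedding into the next by the previous paragraph. Since every $\omega \in \Omega^\bullet$ has finite support and therefore a finite value $|\omega|' = \sum_i (\omega(i) - 1)$, it lies in $\Omega^\bullet_{'\langle d]}$ for all $d \geq |\omega|'$. Hence $\bigcup_{d \geq 0} \Omega^\bullet_{'\langle d]} = \Omega^\bullet$ as sets, and since the orders are compatible (all being restrictions of $\prec_\bullet$), the direct limit in the category of posets is $\Omega^\bullet$. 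The only real potential ``obstacle'' is ruling out a hidden $d$-dependence in Definition \ref{def2.5}, but the explicit form of that definition makes this transparent; so the proof really is a short verification.
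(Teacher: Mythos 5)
Your proof is correct, and it rests on the same key observation as the paper's: Definition \ref{def2.5} quantifies the witness $\omega$ over all of $\Omega$ with no $d$-dependence, so every $\Omega^\bullet_{'\langle d]}$ carries the order simply by restriction of $\prec_\bullet$ from $\Omega^\bullet$. Given that, the set-theoretic identity $\{\omega \in \Omega^\bullet_{'\langle d]} : |\omega|' < d\} = \Omega^\bullet_{'\langle d-1]}$ forces the two posets to be identical (not merely isomorphic), and the direct-limit claim reduces to noting $\bigcup_d \Omega^\bullet_{'\langle d]} = \Omega^\bullet$ because $|\omega|'$ is always finite. Your route is cleaner and more tautological than the paper's: the paper instead tracks the witness $\omega \succ \omega_2$ to deduce the chain $|\omega_1|' \le |\omega|' \le |\omega_2|'$ (using that each merge and each insert raises the reduced norm by one, and that a string or atom of $\omega$ has reduced norm $\le |\omega|'$), and concludes that $|\omega_2|' < d$ already forces $|\omega_1|' < d$. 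That monotonicity argument establishes a stronger and independently useful fact---that $\Omega^\bullet_{'\langle d-1]}$ is an up-set inside $(\Omega^\bullet_{'\langle d]}, \succ_\bullet)$, echoing the upper-semicontinuity in Lemma \ref{lem2.5}---but it is not strictly needed for the isomorphism as stated, since the sub-poset already has the bound $|\omega|' < d$ built into its very definition. So both proofs are valid; yours isolates exactly what needs to be checked, while the paper's records an extra closure property as a byproduct.
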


\begin{proof} Let  $\omega_1\, \succ_\bullet \omega_2$ in $\Omega^\bullet_{'\langle d]}$. This implies that $\omega_1$ is string or an atom in some  $\omega \succ \omega_2$ in $(\Omega, \succ)$. If  $|\omega_2|' < d$,  then $|\omega|' < d$ as well. In turn, $|\omega|' < d$ implies that $|\omega_1|' < d$. Therefore $\omega_1\,  \succ_\bullet \omega_2$ in $\Omega^\bullet_{'\langle d - 1]}$ as well.
\end{proof}

Similar to the poset $(\Omega, \succ)$,  the poset $(\Omega^\bullet, \succ_\bullet)$ allows for an interpretation in terms of a category theory $\mathbf \Omega^\bullet$ whose objects are elements of $\Omega^\bullet$. 

\begin{definition}\label{def2.6} For any two elements $\omega_1, \omega_2 \in \Omega^\bullet$ we define $Mor^\bullet(\omega_1, \omega_2)$ as the set of maps $\a: \sup(\omega_1) \to \sup(\omega_2)$ such that: 
\begin{enumerate}
\item for each pair $i < i'$ in $\sup(\omega_1)$,  $\a(i) \leq \a(i')$, %????  
\item $\sum_{j \in \a^{-1}(i)}\, \omega_1(j) \leq \omega_2(i)$ for all $i \in \sup(\omega_2)$\footnote{This restriction is vacuous when $\a^{-1}(i) = \emptyset$.}.  \hfill\qed
%\item $\sum_{j \in \a^{-1}(i)}\, \omega_1(j) \equiv \omega_2(i)\; mod(2)$ for all $i \in sup(\omega_2)$\footnote{In particular, if $\a^{-1}(i) = \emptyset$, then $\omega_2(i)  \equiv 0\; mod(2).$}.
\end{enumerate}
\end{definition}

\begin{lemma}\label{lem2.4} In the category $\mathbf \Omega^\bullet$, the set $Mor_\bullet(\omega_1, \omega_2) \neq \emptyset$, if and only if, $\omega_1 \succeq_\bullet \omega_2$ in the poset  $(\Omega_\bullet, \succ_\bullet)$.
\end{lemma}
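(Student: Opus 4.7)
The plan is to mirror the proof of Lemma \ref{lem2.1}, using the ambient poset $(\Omega, \succ)$ and its morphism theory as a reservoir of constructions. Two preliminary observations pave the way: (a) $Mor^\bullet$ is closed under composition, since for $\a \in Mor^\bullet(\omega_1, \omega_2)$ and $\b \in Mor^\bullet(\omega_2, \omega_3)$ the map $\b \circ \a$ is monotone and $\sum_{j \in (\b\a)^{-1}(k)} \omega_1(j) = \sum_{i \in \b^{-1}(k)}\sum_{j \in \a^{-1}(i)} \omega_1(j) \leq \sum_{i \in \b^{-1}(k)} \omega_2(i) \leq \omega_3(k)$; (b) $Mor \subseteq Mor^\bullet$, because conditions (1)--(2) of Definition \ref{def2.2} coincide with those of Definition \ref{def2.6} and only (3) is dropped.

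For the ``only if'' direction, transitivity of $\succeq_\bullet$ together with observation (a) reduces the claim to a single direct relation $\omega_1 \succ_\bullet \omega_2$ (the case $\omega_1 = \omega_2$ being covered by the identity map). By Definition \ref{def2.5}, such a step supplies $\omega \in \Omega$ with $\omega \succ \omega_2$ in $(\Omega, \succ)$ and with $\omega_1$ appearing as a string (or atom) of $\Xi(\omega)$ via an order-preserving inclusion $\iota : \sup(\omega_1) \hookrightarrow \sup(\omega)$ satisfying $\omega \circ \iota = \omega_1$. Lemma \ref{lem2.1} applied to $\omega \succ \omega_2$ furnishes $\b \in Mor(\omega, \omega_2) \subseteq Mor^\bullet(\omega, \omega_2)$, and $\a := \b \circ \iota$ is monotone with $\sum_{j \in \a^{-1}(i)} \omega_1(j) = \sum_{k \in \iota(\sup(\omega_1)) \cap \b^{-1}(i)} \omega(k) \leq \sum_{k \in \b^{-1}(i)} \omega(k) \leq \omega_2(i)$, so $\a \in Mor^\bullet(\omega_1, \omega_2)$.

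For the converse, given $\a \in Mor^\bullet(\omega_1, \omega_2)$, the goal is to exhibit a single $\omega \in \Omega$ with $\omega \succ \omega_2$ and $\omega_1 \in \Xi(\omega)$, which directly witnesses $\omega_1 \succ_\bullet \omega_2$. Adapting the construction in the ``$\Rightarrow$'' part of Lemma \ref{lem2.1}, I would assemble $\omega$ block by block over $i \in \sup(\omega_2)$: when $J_i := \a^{-1}(i)$ is nonempty, the $i$-th block consists of the $\omega_1$-values on $J_i$ in their original order, together with (at most one) filler position of multiplicity $r_i := \omega_2(i) - \sum_{j \in J_i} \omega_1(j) \geq 0$ placed on the outer side of $\omega_1$'s range; when $J_i = \emptyset$, the block is a single position of multiplicity $\omega_2(i)$. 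Concatenating the blocks in order of $i$ produces $\omega$, and performing the block-internal merges collapses $\omega$ onto $\omega_2$.

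The delicate part is arranging that $\omega_1$ appears as an \emph{exact} string of $\Xi(\omega)$: this requires $\a(\sup(\omega_1))$ to be a contiguous sub-interval of $\sup(\omega_2)$ with no empty $J_i$ inside, the extremal-block fillers to be placed outside $\omega_1$'s range, and the number of odd-multiplicity positions of $\omega$ strictly outside $\omega_1$ on each side to be even (so that $\omega_1$'s bounding odd entries acquire correctly paired numerals in the deconstruction). When the given $\a$ does not immediately meet these conditions, I would either replace it with another element of $Mor^\bullet(\omega_1, \omega_2)$ obtained by shifting $\omega_1$-positions across blocks, exploiting the slack in the inequality of Definition \ref{def2.6}, or first perform a preliminary $\succ_\bullet$ step padding $\omega_1$ by $2$-atoms placed outside its block, which repairs the parity without disturbing $\omega_1$'s string status. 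I expect the main obstacle to be exactly this case analysis -- verifying that the required adjustments are always available -- and I plan to settle it by induction on the number of ``bad'' blocks of $\a$ or on the aggregate filler $\sum_i r_i$, the base case being Lemma \ref{lem2.1} itself.
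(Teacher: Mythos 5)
Your first direction (the implication $\omega_1 \succeq_\bullet \omega_2 \Rightarrow Mor^\bullet(\omega_1,\omega_2)\neq\emptyset$, which you mislabel as ``only if'') is essentially the paper's argument: apply Lemma \ref{lem2.1} to the witness $\omega \succ \omega_2$, then precompose the resulting map with the inclusion $\iota : \sup(\omega_1)\hookrightarrow\sup(\omega)$; conditions (1)--(2) survive restriction while (3) is simply dropped. That part is fine.

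The converse is where your proposal has a real gap. Your block-by-block construction produces some $\omega$ merging onto $\omega_2$, but you yourself list the conditions that would make $\omega_1$ a genuine string of $\Xi(\omega)$ (contiguity of $\a(\sup(\omega_1))$, placement of fillers, parity of odd-multiplicity positions flanking $\omega_1$), and you defer their verification to an unspecified induction. Worse, the two repairs you float do not work as stated. Padding by a $2$-atom cannot fix parity: the defect is that $\sum_{j\in\a^{-1}(\a(1))}\omega_1(j)$ and $\omega_2(\a(1))$ differ mod $2$, and adding an even contribution to the preimage sum leaves that discrepancy intact. ``Shifting $\omega_1$-positions across blocks'' is not guaranteed to be available from the inequality slack alone.

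The paper's proof of the converse is far more economical and you should note why. Because $\omega_1\in\Omega^\bullet$, at most the two extreme positions $1$ and $q$ of $\sup(\omega_1)$ carry odd values, so for any $\a$ satisfying (1)--(2) of Definition \ref{def2.6} the parity condition (3) of Definition \ref{def2.2} can fail only at $\a(1)$ and/or $\a(q)$. One does not rebuild $\omega$ from scratch: one takes $\tilde\omega_1$ to be $\omega_1$ itself with a $(1,1)$ string prepended (and/or appended). The first new position maps to the minimal element $1_\ast\in\sup(\omega_2)$ (which is forced to lie outside the image of $\a$), and the second new position maps to $\a(1)$, contributing an extra $1$ to the preimage sum there and flipping its parity; the magnitude bound (2) survives because the strict inequality is automatic when the parities disagree. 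Crucially, the appended $(1,1)$ automatically forms its own string in $\Xi(\tilde\omega_1)$ and leaves $\omega_1$ intact as a string, so the global bookkeeping you were worried about never arises. You should replace your block construction by this local repair of $\a$ at its endpoints.
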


\begin{proof}  If $\omega_1 \succeq_\bullet \omega_2$, then by the definition of the order $\succ_\bullet$,  there exists an element $\tilde\omega_1\in \Omega$ such that $\tilde\omega_1 \succ \omega_2$, and $\omega_1$ is a string or an atom in $\tilde\omega_1$. By Lemma \ref{lem2.1}, $Mor(\tilde\omega_1, \omega_2) \neq \emptyset$. When we restrict a map $\tilde\a \in Mor(\tilde\omega_1, \omega_2)$ to the subset $\sup(\omega_1) \subset \sup(\tilde\omega_1)$ we produce a map $\a: \sup(\omega_1) \to \sup(\omega_2)$. Under this restriction, the properties $(1)$ and $(2)$ from Definition \ref{def2.2}, valid for $\tilde\a$, are evidently preserved for its restriction $\a$: the support of a string consists of consecutive indices, and $\tilde\a^{-1}(i) \supset \a^{-1}(i)$ for all $i \in \sup(\omega_2)$, while the property $(3)$ could be violated. Thus $\a \in Mor^\bullet(\omega_1, \omega_2)$, a nonempty set.
\smallskip

On the other hand, if $Mor^\bullet(\omega_1, \omega_2) \neq \emptyset$, then it is possible to construct $\tilde\omega_1 \in \Omega$ such that $Mor(\tilde\omega_1, \omega_2) \neq \emptyset$. Indeed, consider some $\a: \sup(\omega_1) \to \sup(\omega_2)$ having properties $(1)$ and $(2)$ from Definition \ref{def2.6}. If, in addition,  property $(3)$ from Definition \ref{def2.2} holds, we are done. Note that this parity property can only be violated if one or both ends $\{1\}, \{q\}$ in the support of  the string $\omega_1$ are mapped by $\a$ to the location $\a(1)$ or $\a(q)$ such that $\omega_2(\a(1)) \equiv 0\; \mod(2)$, or $\omega_2(\a(q)) \equiv 0\; \mod(2)$. 

Let us consider the case $\omega_2(\a(1)) \equiv 0\; \mod(2)$. The monotonisity of $\a$ implies that the minimum element $\{1_\star\}$ in the support of $\omega_2$ is not in the image of $\a$. We can append  a string $\omega' = (1,1)$ below (to the left) of the string $\omega_1$ to form a new element $\tilde\omega_1 \in \Omega$ (not a string!) so that 
$\sup(\tilde\omega_1) = \sup((1,1)) \sqcup \sup(\omega)$. Then we extend the map $\a$ to a map $\tilde\a: \sup((1,1)) \sqcup \sup(\omega) \to \sup(\omega_2)$ by sending the first element of the extended support to the minimal element $\{1_\ast\} \in \sup(\omega_2)$, and the second and the third elements both to $\a(1)$ (note that the third element comes from the element $\{1\} \in \sup(\omega_1)$). This will repair the parity defect of the original $\a$. A similar treatment applies when $\omega_2(\a(q)) \equiv 0\; \mod(2)$. Thus, $Mor^\bullet(\omega_1, \omega_2) \neq \emptyset$ implies that $Mor(\tilde\omega_1, \omega_2) \neq \emptyset$. 

By Lemma \ref{lem2.1}, we get that $\tilde\omega_1 \succ \omega_2$ in $(\Omega, \succ)$.  Therefore,  $Mor^\bullet(\omega_1, \omega_2) \neq \emptyset$ implies  that $\omega_1 \succ_\bullet \omega_2$.
\end{proof}
\smallskip

The following lemma can be viewed as an a posteriori justification for introducing the poset $(\Omega^\bullet, \succ_\bullet)$. Here,  for a boundary generic vector field $v \in \mathcal V^\dagger(X)$ and its trajectory $\g$, we view the set $\g \cap \d_1X$, together with the multiplicities attached to its points, as a divisor $D_\g$ on $\g$. 

\begin{lemma}\label{lem2.5} Let $v \in \mathcal V^\dagger(X)$. If a sequence $\{ x_k \in X\}_k$ converges to a point $x$, then $$|D_{\g(x)}|' \; \geq \; \overline{\lim}_{k \to +\infty}\; |D_{\g(x_k)}|'$$, i.e. the reduced norm $|D_{\g( \sim)}|' $ is a upper semi-continuous function on $X$. Moreover, if the divisors $\{D_{\g(x_k)}\}_k$ all share a combinatorial type $\omega$, and $\omega_1 \neq \omega$ is the combinatorial type of  $D_{\g(x)}$, then we get: $\omega  \succ_\bullet \omega_1$,\,   $|\omega_1| \geq |\omega|$,\, and $|\omega_1|' > |\omega|'$.
\end{lemma}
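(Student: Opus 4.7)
The plan is to localize the problem around each intersection point of $\g(x)$ with $\d_1 X$ and reduce it to a bifurcation analysis for real polynomial divisors, then assemble the local data. Write $\omega_1 = (j_1, \dots, j_p)$, where $\g(x) \cap \d_1 X = \{a_1,\dots,a_p\}$ with $a_i \in \d_{j_i}X(v)^\circ$. By Lemma \ref{lem1.1}, near each $a_i$ there are coordinates $(u,\vec x)$ in which $\hat v$-trajectories are the lines $\{\vec x = \mathrm{const}\}$ and $\d_1 X$ is cut out by $u^{j_i} + \sum_{l=0}^{j_i - 2} x_l u^l = 0$. Continuous dependence of the $\hat v$-flow on initial conditions ensures that, for $k$ large, $\g(x_k)$ stays uniformly close to $\g(x)$ on any compact piece, so all intersections of $\g(x_k)$ with $\d_1 X$ concentrate in small pairwise disjoint neighborhoods of the points $a_i$.

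Because the collection of possible tangency patterns near each $a_i$ is finite (bounded by $j_i$ together with the parity rule of Lemma \ref{lem1.1}), I would pass to a subsequence so that all $D_{\g(x_k)}$ share the single combinatorial type $\omega$ and so that near each $a_i$ the intersection of $\g(x_k)$ with $\d_1 X$ realizes the same local block $\omega^{(i)}$, independent of $k$. Then $\omega$ is the concatenation $\omega^{(1)} \| \omega^{(2)} \| \cdots \| \omega^{(p)}$, read in the trajectory order. By the third bullet of Lemma \ref{lem1.1}, each block satisfies $|\omega^{(i)}| \leq j_i$ and $|\omega^{(i)}| \equiv j_i \pmod{2}$.

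Define $\a:\sup(\omega)\to\sup(\omega_1)$ by sending the support of the $i$-th block $\omega^{(i)}$ to $i$. This map is monotone, and $\sum_{j\in \a^{-1}(i)}\omega(j) = |\omega^{(i)}| \leq j_i = \omega_1(i)$, so $\a\in Mor^\bullet(\omega,\omega_1)$, and Lemma \ref{lem2.4} delivers $\omega \succ_\bullet \omega_1$. For the norms, $|\omega|=\sum_i |\omega^{(i)}| \leq \sum_i j_i = |\omega_1|$, giving $|\omega_1|\geq|\omega|$. Assuming $\omega\neq\omega_1$, at least one block $\omega^{(i)}$ differs from the trivial pattern $(j_i)$; the alternatives are $|\omega^{(i)}| < j_i$ (forcing $|\omega^{(i)}| \leq j_i - 2$ by parity) or $|\sup(\omega^{(i)})| \geq 2$. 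Using $|\omega^{(i)}|' = |\omega^{(i)}| - |\sup(\omega^{(i)})|$, a direct check gives $|\omega^{(i)}|' \leq j_i - 2$ in either scenario, while the other blocks contribute at most $j_{i'}-1$ each; summing yields $|\omega|' \leq |\omega_1|' - 1$, i.e.\ $|\omega|' < |\omega_1|'$. Upper semi-continuity of $|D_{\g(\sim)}|'$ then follows, since every subsequence of $\{x_k\}$ has a further subsequence with shared type $\omega$ satisfying $|\omega|' \leq |\omega_1|'$.

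The main obstacle is establishing rigorously the localization: that the global combinatorial type $\omega$ of $\g(x_k)$ really decomposes as the clean concatenation $\omega^{(1)} \| \cdots \| \omega^{(p)}$. This requires showing that $\g(x_k)$ picks up no extra intersections with $\d_1 X$ outside the small neighborhoods of the $a_i$'s, and that these neighborhoods can be chosen simultaneously $\hat v$-flow compatible so that the local coordinates of Lemma \ref{lem1.1} apply uniformly to the family $\{\g(x_k)\}$ for all large $k$. This is ultimately a uniform continuity/compactness statement for the flow, but it underlies the transfer of the purely polynomial bifurcation picture to the geometric setting and deserves careful attention.
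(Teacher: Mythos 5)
Your overall strategy (localize near the tangency points $a_i$ of $\g(x)$, decompose the type $\omega$ of $\g(x_k)$ into blocks, exhibit a morphism and invoke Lemma \ref{lem2.4}, then count reduced norms) is a legitimate alternative to the paper's argument, but as written it has two genuine gaps. First, the localization you flag at the end is not a routine uniform-continuity remark; it is the mathematical heart of the lemma, and it is exactly what the paper proves. One needs (a) a uniform bound on the number of points of $\tilde\g\cap\d_1X$ for trajectories near $\g(x)$ --- the paper establishes a global bound by a separate compactness/limit-point argument before anything else, and without it you cannot even extract the constant-type subsequence your semicontinuity deduction relies on; and (b) the fact that every accumulation point of $\bigcup_k\big(\g(x_k)\cap\d_1X\big)$ lies on $\g(x)$ itself, which the paper obtains by taking convergent subsequences of the $j$-th tangency points and showing that the connecting trajectory segments converge to segments of one trajectory through $x$. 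Without (b) your map $\a$ is not defined on all of $\sup(\omega)$ and the block decomposition $\omega=\omega^{(1)}\|\cdots\|\omega^{(p)}$ has no justification.

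Second, the parity claim ``$|\omega^{(i)}|\equiv j_i \ \mathrm{mod}\ 2$ by the third bullet of Lemma \ref{lem1.1}'' is false: parity holds for the full real root divisor of the degree-$j_i$ local polynomial along the flow line through $x_k$, whereas $\g(x_k)$ is only one connected component of that flow line intersected with $X$ and may carry only part of that divisor --- e.g.\ a single simple crossing near an interior multiplicity-$2$ tangency of $\g(x)$, a block $(2)$ near a cusp with $j_i=3$, or an empty block over a simple endpoint when $\g(x_k)$ terminates before reaching it. (Dropping the parity condition (3) of Definition \ref{def2.2} is precisely why $Mor^\bullet$ exists, so your construction of $\a$ and the appeal to Lemma \ref{lem2.4} survive without parity; your norm count does not.) Concretely, for an empty block over a point with $j_i=1$ your bound $|\omega^{(i)}|'\le j_i-2$ reads $0\le -1$, so your computation does not exclude $\omega\neq\omega_1$ with $|\omega|'=|\omega_1|'$ when the two types differ only by missing simple endpoints; excluding that configuration requires an extra argument (persistence of the transversal crossings at the odd-multiplicity endpoints of $\g(x)$, or, as the paper argues, that the limit divisor is produced from the nearby ones only by merges whose multiplicities add, by insertions of even multiplicity, and by odd increases of endpoint multiplicity, each of which strictly raises the reduced norm). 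With those two repairs your route --- which is genuinely different in that the paper never builds the morphism of Definition \ref{def2.6} explicitly but tracks the degeneration of divisors directly --- would yield a complete proof.
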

\begin{proof} The trust of the argument is that any merge or insert operations (see (\ref{eq2.1})-(\ref{eq2.3})) that change the combinatorics of $D_\g$  cannot decrease the multiplicity $m(\g)$, as well as the reduced multiplicity $m'(\g)$ (defined in (\ref{eq1.2})-(\ref{eq1.4})). In fact, any such operation does increase $m'(g)$. 

In the argument below, we use our understanding of local models (\ref{eq1.5}) (see Lemma \ref{lem1.1}) of fields $v \in \mathcal V^\dagger(X)$ in the vicinity of $\d_1X$. They imply that, as we vary $\g$, its divisor $D_\g$ (with the support  $\g \cap \d_1X$) can only change locally  as the real divisors of  the family of $u$-polynomials  in (\ref{eq1.5}) do: that is, via resolutions that either preserve their degree, or drop it by an even number. By the same token, as $\{x_k\}$ converge to $x$, in the vicinity of $x$, the divisor $D_{\g(x)}$ is obtained from $\{D_{\g(x_k)}\}$ only via:  (1) merging adjacent distinct points from $\sup(D_{\g(x_k)})$, or (2) by inserting new points of even multiplicity, or 3) by increasing the odd multiplicity of the end point from $\sup(D_{\g(x_k)})$ by an odd number (this happens when the ends of two distinct trajectories merge at $x$).

Let us take a closer look at these mechanisms.  

By Lemma \ref{lem1.1}, for a boundary generic $v \in \mathcal V^\dagger(X)$, the multiplicity of each point from $\g \cap \d_1X$ does not exceed $n +1$ for all $\g$'s. Moreover, by the same lemma and a compactness argument, no trajectory $\g$ has an infinite set $\g \cap \d_1X$. Thus the multiplicity $m(\g) < \infty$ for each $\g$. 

Consider  any infinite set of trajectories $\{\g_k\}_k$ such that  the set of multiplicities $\{m(\g_k)\}_k$ is unbounded. Since the multiplicity of each point of tangency is uniformly bounded, this implies that the set $$A := \cup_{1 \leq k < \infty} (\g_k \cap \d_1X)$$ is infinite and $\lim_{k \to \infty} |\g_k \cap \d_1X| = \infty$. Let $y$ be a limit point for $A$. Consider the trajectory $\g_y$ through $y$. Using  the compactness of $\g_y$ and the local models from Lemma \ref{lem1.1}, $\g_y$ has a neighborhood in which any trajectory $\tilde\g$ has a uniformly bounded cardinality of the set $\tilde\g \cap \d_1X$.  This  contradicts to the assumption that $\lim_{k \to \infty} |\g_k \cap \d_1X| = \infty$. Therefore there is a number $d$ so that, for every trajectory $\g$ in $X$, its combinatorial type belongs to the set $\Omega_{\langle d]}$. 

Consider the combinatorial types of $\{D_{\g(x_k)}\}_k$, a finite set. At least one of them, say $\omega$,  must occur infinitely many times in a subsequence  $\{D_{\g(x_{k_i})}\}_i$ of $\{D_{\g(x_{k})}\}_k$. For each $j \in \sup(\omega)$, consider the sequence  of points $\{y_{j,i} \in \g(x_{k_i})\}_i$ which occupy the $j$-th position in $\g(x_{k_i}) \cap \d_1X$. By its definition, $y_{j,i} \in \d_{\omega(j)}X^\circ$. Since $\d_{\omega(j)}X$ is compact,  there exists a subsequence $\{y_{j,i_l} \}_l$ that converges to a point $y_j \in \d_{\omega(j)}X$. Its multiplicity, $m(y_j)$, is $\omega(j)$ at least. 

Consider the segments $\g_{j, j+1; i}$ of $\g(x_{k_i})$ bounded by the adjacent pairs $(y_{j,i}, y_{j+1, i})$. The interiors of the segments do not intersect the boundary $\d_1X$. By the continuous dependence of $v$-integral curves on their end points,  $\{\g_{j, j+1; i_l}\}_l$ converge to a trajectory segment $\g_{j, j+1}$ that connects $y_j$ with $y_{j+1}$ (that segment may contain  "new" points from $\d_1X$ in its interior). As a result, all the $y_j$'s belong to the same trajectory $\g$. Since each $x_{k_{i_l}}$ belongs to some segment $\g_{j, j+1; i_l}$ and $\lim_{l \to +\infty} x_{k_{i_l}} = x$, we get $\g(x) = \g$. 

By the argument above, $\sup(D_{\g(x)})$ may contain new points that are not in the limit of  $\cup_l\, \sup(D_{\g(x_{k_{i_l}})})$. This can only boost the reduced multiplicity $|D_{\g(x)}|'$ in comparison to $|D_{\g(x_{k_{i_l}})}|'$.  On the other hand, some points in $\sup(D_{\g(x)})$ are the result of merging of two or several consecutive points $\{y_{j, {i_l}}\}_j$ from $\g(x_{k_{i_l}})$. In the process, thanks to the local models  (\ref{eq1.5}), their multiplicities add; so again, the reduced multiplicity of the limiting point exceeds the sum of the reduced  multiplicities of the corresponding merging points. Therefore, $$m'(\g(x)) \geq \overline{\lim}_{k \to +\infty}\; m'(\g(x_k)).$$ Moreover, if the combinatorial type $\omega$ of $\g(x_{k_i}) \cap \d_1X$ differs from the combinatorial type $\omega'$ of $\g(x) \cap \d_1X$, then  $m'(\g(x)) >  \overline{\lim}_{k \to +\infty}\; m'(\g(x_k))$. Similarly,  $m(\g(x)) \geq \overline{\lim}_{k \to +\infty}\; m(\g(x_k))$. 

The same same argument shows that the relation between the types $\omega$ and $\omega'$ is exactly the order relation $\omega \succ_\bullet \omega'$ in $\Omega^\bullet$, as introduced in Definition \ref{def2.5}.
\end{proof}

%%%%%%%
\section{On Stratified Spaces}
%%%%%%%

Let $(\mathcal S, \succ)$ be a \emph{poset} (a partially ordered set).  Given a subset $\Theta \subset \mathcal S$ in the poset $(\mathcal S, \succ)$, we denote by $\Theta_\succeq$ the set $$\big\{a \in  \mathcal S|\; b \succeq a \; \text{for some}\, b \in \Theta\big\}.$$
Let $\Theta_\succ := \Theta_\succeq \setminus \Theta$.  Similarly, we introduce $$\Theta_\preceq := \big\{a \in  \mathcal S|\; a \succeq b \; \text{for some}\, b \in \Theta\big\}\; \text{and}\; \Theta_\prec  := \Theta_\preceq \setminus \Theta.$$ 

In particular, for any $\omega \in \Omega^\bullet$, we will  consider routinely  the finite posets $\omega_{\succeq_\bullet}, \; \omega_{\succ_\bullet}$, and, for any $\omega \in \Omega$, the finite posets $\omega_{\succeq}, \; \omega_{\succ}$. 
\smallskip

\begin{definition}\label{def3.1}
A $\mathcal S$-\emph{filtration} of a topological space $X$ is a collection of closed topological subspaces $\{X_{\preceq\omega}\}_{\omega \in \mathcal S}$ such that $\omega \prec \omega'$ implies the inclusion $X_{\preceq\omega} \subset X_{\preceq\omega'}$. Moreover, we require that   each point $x \in X$ which belongs to some stratum $X_{\preceq\omega}$, $\omega \in \mathcal S$, belongs to a \emph{unique} smallest stratum $X_{\preceq\omega(x)}$ labelled by a \emph{minimal element} $\omega(x)$ from the poset $\{\preceq\omega\} \subset \mathcal S$.  \hfill\qed
\end{definition}

When $X$ itself is a member of the collection $\{X_{\preceq\omega}\}_{\omega \in \mathcal S}$, we assume that $\mathcal S$ has a unique maximal element $\omega_\star$ and that$X_{\preceq\omega_\star} = X$.

For an $\mathcal S$-filtrered space $X$, we define 
$$X_{\prec\omega} := \cup_{\omega' \prec \omega}\; X_{\preceq \omega'} \;\; \text{and} \;\; X_\omega := 
X_{\preceq \omega} \setminus X_{\prec \omega}.$$

In general, for any subset $\Theta \subset \mathcal S$, put $X_{\preceq\Theta} : = \cup_{\omega \in \Theta}\; X_{\preceq\omega}.$

\begin{definition}\label{def3.2} An $\mathcal S$-\emph{filtered map} $f: X \to Y$ between $\mathcal S$-filtered spaces $X$ and $Y$ is a continuous map such that $f(X_{\preceq\omega}) \subset Y_{\preceq\omega}$ for each $\omega \in \mathcal S$.\smallskip

A $\mathcal S$-\emph{filtered homotopy} between  $\mathcal S$-filtered maps $f_0: X \to Y$ and $f_1: X \to Y$ is an $\mathcal S$-filtered map $F: X\times [0, 1] \to Y$ such that $F(x, 0) = f_0(x)$ and $F(x, 1) = f_1(x)$ for all $x \in X$. \hfill\qed
\end{definition}

%%%%%%
%%%%%%%
\section{Spaces of Real Polynomials, Stratified  by the Combinatorial Types of their Real Divisors}
%%%%%%
%%%%%%

Next, we would like to investigate carefully one natural stratification in  the coefficient space $\R^d_{\mathsf{coef}}$ of real monic polynomials $P(z)$ of a given degree $d$. This stratification is generated by different combinatorial patterns of  zero divisors $D_\R( P)$ in $\R$. Its importance for our program is justified by the local models for traversally generic fields that have been described in Lemmas \ref{lem1.1} and \ref{lem1.2}.  \smallskip

Let $\tau: \C \to \C$ be the complex conjugation. Via the Vi\`{e}te Map $V$, the coefficient space $\R^d_{\mathsf{coef}}$ can be identified with the space  $(\mathsf{Sym}^d\C)^\tau$ of $\tau$-invariant divisors in $\C$ of degree $d$. 

Each function $\omega: \N \to \Z_+$ of $l_1$-norm $|\omega| \leq d$ and such that  $|\omega| \equiv d \; (2)$ defines a pure stratum $(\mathsf{Sym}^d\C)_\omega^\tau$ in 
$(\mathsf{Sym}^d\C)^\tau$ and therefore, in $\R^d_{\mathsf{coef}}$. We denote this stratum $V(\mathsf{Sym}^d\C)_\omega^\tau)$ by  $\mathsf R_\omega$. In $\R^d_{\mathsf{coef}}$, it represents all monic real polynomial $P$ such that the combinatorics of $D_\R( P)$ is prescribed by $\omega$. We denote by $(\mathsf{Sym}^d\C)^{\tau}_{\omega_\succeq}$ the closure of the stratum  $[(\mathsf{Sym}^d\C)_\omega]^\tau$, and by $\mathsf R_{\omega_\succeq}$ the closure of the stratum $\mathsf R_\omega$, respectively.

The depressed form of the polynomial in (\ref{eq1.5}) and  (\ref{eq1.7}) calls for the introduction of similar spaces built out of, so called, \emph{balanced divisors}. 

Let $\a \in \R$. A divisor $D = \oplus_i m(i) z_i$ with $z_i \in \C$ is called $\a$-\emph{balanced}, if $$\sum_i m(i) z_i = \a\cdot\sum_i m(i)$$ in $\C$. In other words, $\a$ is the center of gravity of the configuration of points-particles in $\C$ representing $D$. In the root space $\mathsf{Sym}^d\C = \{z_1, \dots , z_d\}$ (where $d = |\omega|$), such divisors are characterized by the equation $\sum_{k =1}^d z_k = \a\cdot d$. 

The conjugation-invariant $\a$-balanced divisors from $(\mathsf{Sym}^d(\C))^\tau$ are described by the equation $$\sum_{k =1}^d \mathsf{Re}(z_k) = d\cdot \a.$$ They form a real hypersurface $(\mathsf{Sym}^d_\a\C)^\tau$ in $(\mathsf{Sym}^d\C)^\tau$. Zero-balanced divisors (i.e., $\a = 0$) are simply called \emph{balanced}. 

The image of $(\mathsf{Sym}^d_\a\C)^\tau$ under the Vi\`{e}te map $V$ consists of real monic polynomials whose $z^{d-1}$-coefficient is $\a$. In a similar way, we introduce the stratification $$\big\{\mathsf R_{\a, \omega} := V((\mathsf{Sym}^d_\a\C)_\omega^\tau)\big\}_\omega$$ in the  $(d -1)$-dimensional affine space $\R_{\mathsf{coef}, \a}^d$ of such polynomials. 

Let us stress that here we use $\omega$'s that attach multiplicities only to \emph{real} roots (their norms $|\omega|$ do not exceed $d$ and $|\omega| \equiv d\; (2)$)!
\smallskip

\begin{figure}[ht]
\centerline{\includegraphics[height=2.6in,width=2.6in]{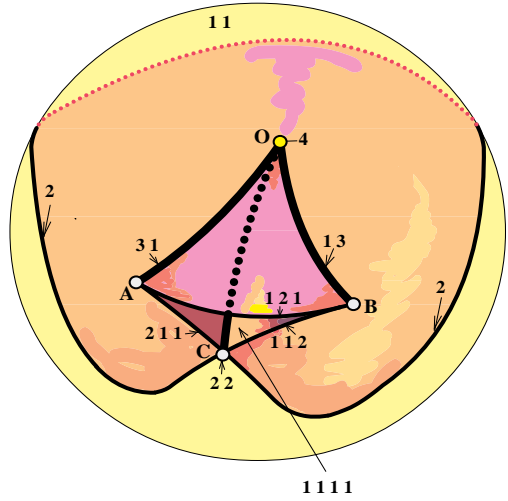}}
\bigskip
\caption{\small{The Swallow Tail singularity is linked to the Whitney projection of the hypersuface  $\{z^4 + x_2 z^2 + x_1 z + x_0 = 0\}$ onto the space $\R_{\mathsf{coef}, 0}^3$ with the coordinates $(x_0, x_1, x_2)$. The strata in  $\R_{\mathsf{coef}, 0}^3$ are indexed by elements $\omega \in \Omega_{\langle4]}$. They divide the target space into three 3-cells, four 2-cells, three 1-cells, and one 0-cell.}}
\end{figure}

Consider the upper half plane $$\H := \{z \in \C| \; \mathsf{Re}(z) \geq 0\}$$, and let $$\H^\circ := \{z \in \C| \; \mathsf{Re}(z) > 0\}.$$ Let $B^2 \subset \C$ be the unit ball centered on the origin. Put $B^2_+ = B^2 \cap \H$. Similarly, for any $\a \in \R$, let  $B^2(\a) \subset \C$ be the unit ball centered on $\a$, and $B^2_+(\a) :=  B^2(\a) \cap \H$.

\begin{lemma}\label{lem4.1} The symmetric product $\mathsf{Sym}^m\H$ is homeomorphic to the half-space $\R^{2m}_+$, bounded by a hyperplane in $\R^{2m}$. \smallskip

Let $B^2$ denotes a closed $2$-ball. The symmetric product $\mathsf{Sym}^mB^2$ is homeomorphic to a closed $2m$-ball $B^{2m}$. 
\end{lemma}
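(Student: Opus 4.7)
The plan uses the classical Vi\`{e}te homeomorphism $V: \mathsf{Sym}^m \C \to \C^m$, sending $\{z_1, \dots, z_m\}$ to the coefficient vector of $\prod_j (z - z_j)$. This map is continuous (elementary symmetric functions are polynomial) with continuous inverse (by the classical continuity of polynomial roots in their coefficients), so $\mathsf{Sym}^m \C \cong \C^m \cong \R^{2m}$. Under $V$, the sets $\mathsf{Sym}^m \H$ and $\mathsf{Sym}^m B^2$ are closed subsets of $\R^{2m}$, and the task reduces to exhibiting homeomorphisms onto $\R^{2m}_+$ and $B^{2m}$, respectively.

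I would proceed by strong induction on $m$, treating both parts in tandem; the base case $m = 1$ is immediate since $\mathsf{Sym}^1 X = X$. For the inductive step, two ingredients are needed. First, \emph{contractibility} via the radial scaling $H_t(\{z_j\}) := \{tz_j\}$, $t \in [0, 1]$, which preserves $\H$ and $B^2$ and deformation retracts each symmetric product onto the divisor $\{m \cdot 0\}$. Second, a \emph{local product decomposition}: at a divisor $D$ with support $\{y_1, \dots, y_s\} \subset \partial X$ (multiplicities $m_1, \dots, m_s$) and interior support $\{w_1, \dots, w_t\}$ (multiplicities $n_1, \dots, n_t$), for $X = \H$ or $B^2$, a neighborhood of $D$ factors as
\[
\prod_j \mathsf{Sym}^{m_j}(U_{y_j} \cap X) \;\times\; \prod_k \mathsf{Sym}^{n_k}(U_{w_k})
\]
for small disjoint neighborhoods $U$. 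The interior factors are $\cong \C^{n_k}$ via local Vi\`{e}te, and the polynomial normal forms in Lemmas \ref{lem1.1} and \ref{lem1.2} identify the boundary factors with the local structure of $\mathsf{Sym}^{m_j} \H$ near $\{m_j \cdot 0\}$. By the inductive hypothesis (when $m_j < m$), each boundary factor is $\R^{2m_j}_+$. The resulting product $\prod_j \R^{2m_j}_+ \times \prod_k \R^{2n_k}$ is then homeomorphic to $\R^{2m}_+$ via iterated application of the elementary homeomorphism between the closed quadrant $\R_{\geq 0}^2$ and the closed half-plane $\R \times \R_{\geq 0}$.

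For Part 1, the global statement $\mathsf{Sym}^m \H \cong \R^{2m}_+$ then follows: $\mathsf{Sym}^m \H$ is a contractible, connected, non-compact topological $2m$-manifold with connected boundary $\R^{2m-1}$, uniquely determined up to homeomorphism by these properties. For Part 2, compactness of $\mathsf{Sym}^m B^2$ combined with contractibility forces the boundary to be a compact $(2m-1)$-manifold that is a homology sphere (via the long exact sequence of the pair) and simply connected for $m \geq 2$ (the inclusion-induced map $\pi_1(\partial) \to \pi_1(\text{total}) = 1$ being surjective by general position), hence $\cong S^{2m-1}$ by the topological Poincar\'e conjecture; then $\mathsf{Sym}^m B^2 \cong B^{2m}$ by the topological characterization of the disk. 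The \textbf{main obstacle} is the handling, within the induction, of the fully-degenerate divisors $\{m \cdot y\}$, $y \in \partial X$, where the local product decomposition degenerates to a single factor of full degree $m$; here one must argue directly that a neighborhood of $\{m \cdot y\}$ in $\mathsf{Sym}^m X$ inherits the correct half-space local model from the polynomial normal form of Lemma \ref{lem1.1} --- effectively, every such neighborhood admits a local chart in which $\mathsf{Sym}^m X$ near $\{m \cdot y\}$ is identified with $\mathsf{Sym}^m \H$ near $\{m \cdot 0\}$, and the global conclusion is bootstrapped from the local description at less degenerate divisors propagated along the contraction $H_t$.
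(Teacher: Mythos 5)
Your proposal has two genuine gaps, and they occur exactly where the real content of the lemma lies. First, the manifold--recognition step fails in both parts. For Part 1, a contractible, connected, non-compact topological $2m$-manifold whose boundary is homeomorphic to $\R^{2m-1}$ is \emph{not} determined up to homeomorphism by these properties: already for open contractible manifolds, contractibility does not pin down $\R^n$ (Whitehead-type examples), and any characterization of $\R^{2m}_+$ along these lines needs a condition at infinity (simple connectivity at infinity, \`a la Stallings), which you never verify for $\mathsf{Sym}^m\H$. For Part 2, the boundary of a compact contractible manifold is indeed a homology sphere, but it need not be simply connected (Mazur- and Newman-type contractible manifolds are bounded by non-simply-connected homology spheres); your parenthetical argument only shows that $\pi_1(\d)\to\pi_1(\mathsf{Sym}^mB^2)=1$ is surjective, which is vacuous and gives no control of $\pi_1(\d)$ itself. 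Hence the appeal to the topological Poincar\'e conjecture and to the characterization of the disk is not available, and proving $\pi_1(\d)=1$ would require an independent geometric argument about which divisors lie on the frontier --- i.e., essentially the work the lemma is asking for. Second, the induction is circular at its crucial case: near the fully degenerate divisor $m\cdot y$ with $y\in\d X$ the local factor has full degree $m$, so the inductive hypothesis does not apply, and Lemma \ref{lem1.1} cannot fill the gap --- it is a normal form for the boundary $\d_1X$ relative to a boundary generic vector field and says nothing about the local topology of $\mathsf{Sym}^m\H$ at $m\cdot 0$. The ``main obstacle'' you name is therefore not a technical loose end but the entire statement, left unproved.

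For comparison, the paper avoids both recognition theorems and induction by a direct cone argument. It transports the origin-centered dilation flow $\psi_t$ to the coefficient space via the Vi\`ete homeomorphism and observes that every nontrivial orbit meets the frontier of $V(\mathsf{Sym}^mB^2)$ (divisors supported in $B^2$ and touching $\d B^2$) in exactly one point, just as it meets every round sphere centered at $\mathbf 0$ exactly once; following the flow therefore identifies that frontier with $S^{2m-1}$ and exhibits $V(\mathsf{Sym}^mB^2)$ as the cone over it, i.e., as $B^{2m}$. The half-space statement is then deduced, not assumed locally: $\mathsf{Sym}^m\H\approx\mathsf{Sym}^m(B_\e(p)\cap B^2)$ is identified with a small neighborhood of the boundary point $m\cdot p\in\d(\mathsf{Sym}^mB^2)\approx\d B^{2m}$, hence with $\R^{2m}_+$. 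If you want to salvage your outline, you should replace the recognition-theorem endgame by some such explicit cone/star-shapedness argument, which simultaneously disposes of the degenerate divisors $m\cdot y$ that your induction cannot reach.
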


\begin{proof} First we validate the second claim of the lemma. 

The origin-centered dilation $t: \C \to \C$ induces an action $\psi_t$ on the ``root space" $\mathsf{Sym}^m\C$: under this action, the support of each divisor is scaled by the factor $t > 0$. With the help of the Vi\`{e}te homeomorphism $V: \mathsf{Sym}^m\C \to \C^m_{\mathsf{coef}}$, this $\psi_t$-action induces a $\Psi_t$-action on the space $ \C^m_{\mathsf{coef}}$.

The boundary of $\mathsf{Sym}^mB^2$ consists of divisors $D$  whose support is contained in $B^2$ and has a non-empty intersection with the circle $\d B^2$.  
We notice that each $\psi_t$-trajectory through a  point $D \in \mathsf{Sym}^mB^2$, distinct from the divisor $m\{0\}$, has a unique point of  intersection with the boundary $\d(\mathsf{Sym}^mB^2)$. Indeed, for each configuration $\D \neq \{0\}$ of points in $B^2$, there is a unique origin-centered dilation $t: \C \to \C$ such that $t(\D) \cap \d B^2 \neq \emptyset$ and $t(\D) \subset B^2$. Since the Vi\`{e}te map $V: \mathsf{Sym}^m\C \to \C^m_{\mathsf{coef}}$ is a smooth homeomorphism, $V(\mathsf{Sym}^mB^2) \subset \C_{\mathsf{coef}}^m$ is homeomorphic to $\mathsf{Sym}^mB^2$. Therefore each trajectory $\Psi_t$-trajectory, except for the trivial trajectory through $m\{0\}$, hits $\d V(\mathsf{Sym}^mB^2) = V(\d(\mathsf{Sym}^mB^2))$ transversally at a singleton. The same property is shared by any $(2m -1)$-sphere in $\C^m_{\mathsf{coef}}$ that is centered on the origin $\mathbf 0$. Therefore, with the help of $\Psi_t$, $\d V(\mathsf{Sym}^mB^2)$ is homeomorphic to a $(2m - 1)$-sphere, and $V(\mathsf{Sym}^mB^2)$ to a closed ball $B^{2m}$. Hence $\mathsf{Sym}^mB^2 \approx B^{2m}$ topologically.
\smallskip

A similar argument helps to analyze the topology of $\mathsf{Sym}^m(\H)$. 

The boundary of the closed half-ball $B^2_+:=  B^2 \cap \H$  consists of the segment $I = [-1, 1] \subset \R$ and the arc $A :=  \{z \in \C |\; |z| =1,\, \mathsf {Im}(z) \geq 0\}$. 

Consider the set $\mathcal S$ of divisors $D \in  \mathsf{Sym}^m\H$ such that their support $\D \subset B^2_+$ and $\D \cap A \neq \emptyset$.  Again, there is a unique origin-centered dilation $t: \C \to \C$ such that $t(\D) \cap A \neq \emptyset$ and $t(\D) \subset B^2_+$, provided that $D \neq m\{0\}$. Therefore each $\psi_t$-trajectory through a  point $D \in \mathsf{Sym}^m\H$, except for the trivial trajectory through $m\{0\}$, has a unique point of  intersection with $\mathcal S$.

The boundary $\d(\mathsf{Sym}^m\H)$ in $\mathsf{Sym}^m\C$ consists of divisors $D$ such that their support $\D$ has a nonempty intersection with the real line $\R \subset \H$. Evidently, $\d(\mathsf{Sym}^m\H)$ is invariant under $\psi_t$-flow. Thus, $\d(\mathsf{Sym}^m\H)$ topologically is a cone over $\d(\mathsf{Sym}^m\H) \cap \mathcal S$, the set of divisors whose support $\D \subset B^2_+$ and such that $\D \cap I \neq \emptyset$.

%???? BAD There is a map $\a: \d(\mathsf{Sym}^m\H) \to  \mathsf{Sym}^{m -1}\H$ which takes any divisor $D \in \d(\mathsf{Sym}^m\H)$ to the divisor $D - a$, where $a$ is the minimal element in $\D \cap \R$. The map $\a$ is onto, and its fiber $\a^{-1}(D - a)$ is homeomorphic to $(-\infty,\; min\{sup(D - a)\}] \approx \R_+ $ when $sup(D - a) \cap \R \neq \emptyset$, or to  $\R$ otherwise. Thus, over $\mathsf{Sym}^{m -1}\H^\circ$, the $\a$-fiber is $\R$ and, over the boundary $\d(\mathsf{Sym}^{m -1}\H)$, the fiber is $\R_+$.  GABI

%KEEP Consider $\C^m$, equipped with the coordinates $\{z_j = x_j + i y_j\}$. Then $\H^m \subset \C^m$  is given by the inequalities $\{y_j \geq 0\}$ which define a convex domain. It is the product of the first quadrant $(\R_+)^m$  in the space spanned by the $\{y_j\}$ and the space $\R^m$  spanned by the $\{x_j\}$.   

Now, take a point $p \in \d B^2$ and a small \emph{open} $\e$-ball $B_\e(p) \subset \C$ with the center at $p$. Since $ H:= B_\e(p) \cap B^2$ is homeomorphic to the half-plane $\H$, $\mathsf{Sym}^m\H \approx \mathsf{Sym}^m H$. 

Consider $m\cdot p$ as a point in $\d(\mathsf{Sym}^mB^2) \approx \d B^{2m}$. Then $\mathsf{Sym}^m H$ can be viewed as an open $\e$-neighborhood of $m\cdot p$ in the space $\mathsf{Sym}^mB^2 \approx B^{2m}$,  the distance in the closed ball $\mathsf{Sym}^mB^2$ being the Hausdorff distance between $\mathsf{S}_m$-orbits in $(B^2)^m \subset \C^m$. At least for a sufficiently small $\e > 0$, that neighborhood $\mathsf{Sym}^mH$, being an $\e$-neighborhood of a point $m\cdot p \in \d(B^{2m})$,  is homeomorphic to $\R^{2m}_+$, and  so is $\mathsf{Sym}^m\H$.
\end{proof} 

For $\omega \in \Omega_{\langle d]}$, consider the space 
\begin{eqnarray}\label{eq4.1} 
e_\omega := \mathsf{Sym}^{|\sup(\omega)|}\R\, \times \, \mathsf{Sym}^{\frac{d - |\omega|}{2}}\H.
\end{eqnarray}
Denote by $\s_\omega$ the subset of $e_\omega$ that consists of divisor pairs $D' \times D''$ such that $\sup(D') \cup \sup(D'') \subset B^2$, but not of its interior. 

Let the subset $e_{\a, \omega} \subset e_\omega$ be defined by the constraint: the divisor $D' + D'' + \tau(D'')$ is $\a$-balanced. Similarly, let $\s_{\a, \omega} \subset \s_\omega$ be defined by the property of  $D' + D'' + \tau(D'')$ being $\a$-balanced.
\smallskip

In what follows, by a ``cell complex"  we mean a Hausdorff topological space $Z$ which is a disjoint union of its subsets $\{e^\circ_\a\}_\a$, where each $e^\circ_\a$ is homeomorphic to an open ball $B^\circ_\a$ of some dimension $d_\a$. Let $B_\a$ be either a closed ball, or an open ball together with a northern hemisphere in  its spherical boundary\footnote{This deviation from the standard definition of $CW$-complex is due to our need to consider germs of classical $CW$-complexes.}. The homeomorphism $B^\circ_\a \to e^\circ_\a$ must extend to a continuous map $\phi_\a: B_\a \to Z$, so that the image $\phi_\a(\d B_\a)$ is contained in a union of finitely many cells $e_\b$ of dimensions lower than $d_\a$. By definition,   $Y \subset Z$ is closed if, for each $\a$, $\phi_\a^{-1}(Y \cap \phi_\a(B_\a) )$ is closed in $B_\a$. Note that $e_\a$, the closure of $e^\circ_\a$ in $Z$, coincides with  $\phi_\a(B_\a)$. We do not require that $e_\a$ will be homeomorphic to a closed ball. 
\smallskip

The next proposition describes one particularly important cellular structure on the space $\mathcal P^d \approx \R^d_{\mathsf{coef}}$ of real degree $d$ monic polynomials, the structure induced by the combinatorial types $\omega$ of their real divisors.

\begin{theorem}\label{th4.1}  Let $\omega \in \Omega_{\langle d]}$. Then the following structures are available:
\begin{itemize}
\item each pure stratum  $\mathsf R_\omega \subset \R^d_{\mathsf{coef}}$ is homeomorphic to an open ball\footnote{In general, the intersection of $\mathsf R_\omega$ with a ball $B^d \subset \R^d$, centered at the origin, topologically is not a ball.} of codimension $|\omega|'$. 
\item the space $\s_\omega$ is homeomorphic to a closed $(d - |\omega|' -1)$-ball, and the space $\s_{\a, \omega}$  to a closed $(d - |\omega|' - 2)$-ball.  The space $e_\omega$ is a positive cone over $\s_\omega$, and $e_{\a, \omega}$ is a positive cone over $\s_{\a, \omega}$.
\item for each $\hat \omega \in \Omega_{\langle d]}$, the strata $\{\mathsf R_\omega\}_{\omega \preceq \hat \omega}$, define the structure of a cell complex on the real affine variety  $\mathsf R_{\hat\omega_\succeq} \subset \R^d_{\mathsf{coef}}$. The attaching maps $$\Phi_\omega^\d : \d e_\omega \to \mathsf R_{\omega_\succeq} \setminus \mathsf R_\omega$$ for the cells $e_\omega$ are described in formulas (\ref{eq4.4})\footnote{See the proof of this theorem for the constructions of the relevant maps.}.
\item the space $\R^d_{\mathsf{coef}}$ admits  a $1$-parameter flow $\Psi_t$  which has a single stationary point $\mathbf 0$ (a source), is transversal to each sphere $S^{d - 1}_{\mathsf{coef}} \subset \R^d_{\mathsf{coef}}$,  centered on $\mathbf 0$, and preserves each stratum $\mathsf R_\omega$. Thus, the cellular structure on $\R^d_{\mathsf{coef}}$ (described in the third bullet)  is a cone over a similar cellular structure on $S^{d - 1}_{\mathsf{coef}}$. % that is described by formulas $(7.4)$.
\end{itemize}
\end{theorem}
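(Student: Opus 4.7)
The plan is to exploit the Vi\`ete identification $V: (\mathsf{Sym}^d\C)^\tau \to \R^d_{\mathsf{coef}}$ to separate, for each stratum, the real-root data from the complex-conjugate-root data. A polynomial in $\mathsf R_\omega$ is uniquely specified by an ordered tuple $x_1 < \cdots < x_q$ of distinct real roots ($q = |\sup(\omega)|$), carrying the prescribed multiplicities $\omega(i)$, together with a divisor $D'' \in \mathsf{Sym}^m(\H^\circ)$, where $m = (d-|\omega|)/2$ accounts for the conjugate pairs. The first factor is an open chamber in $\R^q$; by Lemma \ref{lem4.1}, $\mathsf{Sym}^m\H \cong \R^{2m}_+$, whose interior is homeomorphic to $\R^{2m}$. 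Their product is an open ball of dimension $q + 2m = d - |\omega|'$, which establishes the first bullet.

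For the second bullet, the same parametrization (now allowing degeneracies) yields $e_\omega = \mathsf{Sym}^q\R \times \mathsf{Sym}^m\H \cong \R^q \times \R^{2m}_+$, a closed half-space of dimension $d - |\omega|'$. The origin-centered dilation $\psi_t$ acts diagonally on both factors and, by the same argument used in the proof of Lemma \ref{lem4.1}, every orbit other than the fixed point $(\mathbf 0, \mathbf 0)$ meets $\s_\omega$ transversally in a single point; hence $e_\omega$ is the positive cone on $\s_\omega$, a hemisphere in the half-space and thus a closed $(d - |\omega|' - 1)$-ball. An analogous argument using the $\a$-centered dilation $z\mapsto \a + t(z - \a)$ (which preserves the $\a$-balance constraint), combined with a deformation matching the base balls $B^2$ and $B^2(\a)$, gives $e_{\a,\omega}$ the structure of a positive cone over $\s_{\a,\omega}$, a closed ball of dimension $d - |\omega|' - 2$.

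The third bullet is the heart of the theorem. Define the attaching map
\[
\Phi_\omega : e_\omega \longrightarrow \R^d_{\mathsf{coef}}, \qquad (D', D'') \;\mapsto\; V\Bigl(\textstyle{\sum_{i=1}^q} \omega(i)\, z_i + D'' + \tau(D'')\Bigr),
\]
where $D' = z_1 + \cdots + z_q$ is enumerated by the ordering of $\R$. Its restriction to $e_\omega^\circ$ is the homeomorphism onto $\mathsf R_\omega$ from the first bullet. On the cone boundary $\s_\omega$ three types of degeneracies can occur: (a) two or more consecutive points of $D'$ coincide, producing a merge operation $\mathsf M_j$ on $\omega$; (b) a conjugate pair from $D''$ lands on the real axis, producing an insert operation $\mathsf I_j$; (c) combinations of the two. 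In each case the image lies in a stratum $\mathsf R_{\omega'}$ with $\omega \succ \omega'$, so $\Phi_\omega(\s_\omega) \subset \mathsf R_{\omega_\succeq}\setminus \mathsf R_\omega$. Uniqueness of the real/complex decomposition of any polynomial ensures that the maps $\Phi_\omega$ and $\Phi_{\omega'}$ agree (up to reparametrization) wherever their images overlap, so the collection $\{\Phi_\omega : \omega \preceq \hat\omega\}$ exhibits $\mathsf R_{\hat\omega_\succeq}$ as a cell complex in the sense of Section~3.

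For the fourth bullet, transport $\psi_t$ through $V$: if $P(z) = \prod_k(z - z_k)$, set $\Psi_t(P)(z) := \prod_k(z - t z_k) = t^d P(z/t)$. The unique fixed point is $P(z) = z^d$, i.e.\ the origin of $\R^d_{\mathsf{coef}}$. The coefficient $a_k$ of $z^{d-k}$, being the $k$-th elementary symmetric function of the roots (up to sign), scales as $a_k \mapsto t^k a_k$; consequently any Euclidean norm is strictly increasing along orbits, and $\Psi_t$ is transversal to every origin-centered sphere. Since real dilation preserves the ordering, multiplicities, and reality of roots, each stratum $\mathsf R_\omega$ is $\Psi_t$-invariant, and the cellular structure on $\R^d_{\mathsf{coef}}$ is the positive cone on the induced cellular structure on $S^{d-1}_{\mathsf{coef}}$. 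The main obstacle will be in the third bullet: one must match every boundary degeneracy of a pair $(D', D'')$ with a precise composition of merge and insert operations so that the image is tracked into the correct stratum, and one must verify that the parametrizations $\Phi_\omega$ and $\Phi_{\omega'}$ are compatible on overlapping faces in order to produce a genuine cell complex in the generalized sense introduced in Section~3.
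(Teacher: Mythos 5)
Your overall strategy is the same as the paper's: use the Vi\`ete homeomorphism to parametrize each stratum by an ordered tuple of real roots times a symmetric power of the upper half-plane, transport the root-dilation flow to $\R^d_{\mathsf{coef}}$, and use the addition-of-divisors map $D'\times D'' \mapsto D' + D'' + \tau(D'')$ for the attaching maps. Your first, third and fourth bullets line up with the paper's proof (and your fourth bullet usefully spells out the coefficient scaling $a_k \mapsto t^k a_k$ that the paper leaves ``to the reader'').

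The second bullet, however, contains a genuine error. You write $\mathsf{Sym}^q\R \cong \R^q$, but this is false for $q\geq 2$: the $q$-fold symmetric power of $\R$ is the closed Weyl chamber $\{x_1 \leq \cdots \leq x_q\}\subset\R^q$, a polyhedral cone with nonempty boundary (under Vi\`ete it corresponds only to those monic real polynomials with all roots real, a proper closed subset of $\R^q_{\mathsf{coef}}$). The paper is careful about this: it introduces the pyramid $\Pi_\omega \approx \mathsf{Sym}^{|\sup(\omega)|}\R$ and writes $e_\omega \approx \Pi_\omega \times \R^{d-|\omega|}_+$, \emph{not} $\R^q\times\R^{2m}_+$. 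Your misidentification leaves a gap in the subsequent sentence as well: once $e_\omega$ is recognized as (pyramid)$\times$(half-space) rather than a linear half-space, the assertion that the radial section $\s_\omega$ is ``a hemisphere in the half-space'' is no longer automatic and must actually be proved. The paper handles this by exhibiting
\[
\s_\omega \approx  \bigl(\d\D^p \times B^{2m}_+\bigr) \cup_{\d\D^p \times \delta B^{2m}_+} \bigl(\D^p \times \delta B^{2m}_+\bigr)
\]
and arguing that this space is a $(d-|\omega|'-1)$-ball via a handle-attachment, a step your sketch skips entirely. Similarly, the claim that the maps $\Phi_\omega$ ``agree up to reparametrization wherever their images overlap'' glosses over the fact (made precise in the paper's Lemma~\ref{lem4.2}) that the attaching maps are genuinely finitely ramified on $\d e_\omega$, with fiber cardinalities $o(\omega,\tilde\omega)>1$ in general; the CW-structure tolerates this, but it is not captured by a ``uniqueness of decomposition'' argument. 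To make the proposal a proof, you would need to replace the $\mathsf{Sym}^q\R\cong\R^q$ step by the Weyl-chamber identification, and supply an explicit argument that the resulting radial section of $\Pi_\omega\times\mathsf{Sym}^m\H$ is a closed ball.
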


\begin{proof} Any divisor in $\R$ comes with a particular linear order among the points in its support. Let $P$ be a real polynomial of degree $d$. Its real divisor $D_\R(P)$ of the combinatorial type $\omega$ is determined (with the help of $\omega$) by the support $\sup(D_\R(P))$, a point the chamber $\Pi^p_\circ$ of $\R^p = \{(y_1, \dots , y_p)\}$, defined by the  inequalities $y_1 < y_2 < \dots < y_p$, where $p = |\sup(\omega)|$. In fact, $\Pi^p_\circ$ is one of the $2^{p-1}$ chambers-cells in which the hyperplanes $\{y_i = y_{i+1}\}_i$ divide $\R^p$. 

The set $\Pi^p_\circ$, admits an obvious embedding into the space $\R^{|\omega|}$ with the coordinates $(x_1, \dots , x_{|\omega|})$. It is defined there by the system of $\omega$-dependent equations and inequalities: $$x_1 = x_2 = \dots = x_{\omega(1)} < x_{\omega(1) +1} =  x_{\omega(1) + 2} = \dots =  x_{\omega(1) +\omega(2)} < \dots $$
We  denote by $\Pi_\omega^\circ$ the solution set of this system. Note that $\Pi_\omega^\circ$ is homeomorphic to the open chamber $\Pi^{|\sup(\omega)|}_\circ \subset \R^{|\sup(\omega)|}$.  Let  $\Pi_\omega$ be the closure of $\Pi_\omega^\circ$ in $\R^{|\omega|}$. That closure is homeomorphic to $\Pi^{|\sup(\omega)|} \approx \mathsf{Sym}^{|\sup(\omega)|}\R$. Indeed, each orbit of the natural action of the symmetric group $\mathsf S_{|\sup(\omega)|}$ on $\R^{|\sup(\omega)|}$ intersects the closed chamber $\Pi^{|\sup(\omega)|}$ at a singleton. %and $\Pi_\omega$ to $\R \times (\R_+)^{|\sup(\omega)| -1}$. {\bf ????}
\smallskip

With its real roots being fixed, a polynomial $P$ is determined by the unordered  pairs of its conjugate roots (possibly with multiplicities).  Then the conjugate (non-real) pairs can be identified with points of the space $\mathsf{Sym}^m\H^\circ$, where $m = \frac{1}{2}(d - |\omega|)$. Therefore, $(\mathsf{Sym}^d\C)_\omega^\tau$ is homeomorphic to the space $\Pi^p_\circ \times \mathsf{Sym}^m\H^\circ$, where $p := |\sup(\omega)|$. Since $\H^\circ$ is homeomorphic to $\C$ and $\mathsf{Sym}^m\C$ is homeomorphic to $\C^m$ via the Vi\`{e}te Map $V$, we conclude that $\mathsf{Sym}^m\H^\circ$  is homeomorphic to $\C^m$. Thus, $$(\mathsf{Sym}^d\C)_\omega^\tau \approx \Pi^p_\circ \times \mathsf{Sym}^m\H^\circ \approx  \Pi^p_\circ \times \C^m$$, an open cell of dimension $$p + 2m =  |\sup(\omega)| + (d - |\omega|) = d - |\omega|'.$$ Note that, the Vi\`{e}te map $V$ generates a smooth homeomorphism between the spaces $(\mathsf{Sym}^d\C)_\omega^\tau \approx \R^{d - |\omega|'}$ and  $\mathsf R_\omega := (\R^d_{\mathsf{coef}})_\omega$. Thus the claim of the first bullet is validated.
\smallskip

The flow $\Psi_t : \R^d_{\mathsf{coef}} \to \R^d_{\mathsf{coef}}$ (see the fourth bullet) is induced by the flow $\psi_t$ in the root space $(\mathsf{Sym}^d\C)^\tau$ that applies  dilatations by real factors $t > 0$ to  each $\tau$-symmetric root configuration in $\C$.  The transplantation of  $\psi_t$ to $\R^d_{\mathsf{coef}}$ is done via the  Vi\`{e}te Map $V$, a smooth homeomorphism  $(\mathsf{Sym}^d\C)^\tau \to \R^d_{\mathsf{coef}}$. Evidently, each stratum $(\mathsf{Sym}^d\C)_\omega^\tau$ is $\psi_t$-invariant; therefore each stratum $\mathsf R_\omega$ must be invariant under $\Psi_t$.

We leave to the reader to verify that, for any polynomial $P \neq z^d$, its $\Psi_t$-trajectory is transversal to the spheres $S^{d - 1}_{\mathsf{coef}} \subset \R^d_{\mathsf{coef}}$, centered on $\mathbf 0$ (the verification is a straightforward computation). This proves the claim in the forth bullet.
\smallskip

Next, we turn our attention to the second bullet of the theorem. Each divisor $D \in \mathsf{Sym}^m\H$ produces  to a unique divisor $D_\R$, the part of $D$ that is supported in $\R$. We view $D_\R$ as an element  of $\mathsf{Sym}^l\R$, $l \leq m$. For the majority of $D$'s, $D_\R$ will have an empty support, so we interpret $\mathsf{Sym}^0 \R$ as an empty set. 

Recall that $e_{\omega} := \Pi_\omega \times \mathsf{Sym}^m\H$, where $m = \frac{1}{2}(d - |\omega|)$ (see (\ref{eq4.1})).  In the proof of Lemma \ref{lem4.1}, we have seen that $e_{\omega}$ is homeomorphic to  $\Pi_\omega \times \R^{d - |\omega|}_+$---a $|\sup(\omega)|$-dimensional pyramid  times a half-space in $\R^{d - |\omega|}$. %$$\R \times (\R_+)^{|\sup(\omega)|-1} \times \R^{d - |\omega|}_+$$

The boundary of the unit half-disk $B^2_+  := \{z \in \C :\; |z| \leq 1,\, \mathsf {Im}(z) \geq 0\}$ consists of the segment $I = [-1, 1] \subset \R$ and the arc $$A :=  \{z \in \C :\; |z| =1,\, \mathsf {Im}(z) \geq 0\}.$$ 

The multiplicative group $\R^\ast_+$ of positive real numbers acts semi-freely on the space $\Pi_\omega \times \mathsf{Sym}^m\H$ by the diagonal $\psi_t$-action which is induced by the origin-centered dilations in $\C$. Its only fixed point is the origin $\mathbf 0 := \{0\} \times \{0\}$. This $\R^\ast_+$-action on $e_\omega \setminus \mathbf 0$ admits a  compact section $\s_\omega$ that consists of points $D' \times  D'' \in e_\omega$ such that $\sup(D') \cup \sup(D'')$ is contained in the unit  half-disk $B^2_+  \subset \H$ (centered on the origin) and has a nonempty intersection with the  arc $A$. %but not in its interior. GABI WORK

Thus  $\s_\omega$ is  the set of pairs $D' \in \mathsf{Sym}^p(I), D'' \in \mathsf{Sym}^m(B^2_+)$ such that either $\sup(D') \cap \d I  \neq \emptyset$, or $\sup(D'') \cap A \neq \emptyset$, or both. Here $p = |\sup(\omega)|$. Note that if $\sup(D') \cap \d I \neq \emptyset$, then $\sup(D') \cap A \neq \emptyset$ since $\d A = \d I$. Therefore, $\s_\omega$ can be also described as a set of pairs $(D', D'')$ such that $\sup(D' + D'') \subset B^2_+$ and $\sup(D' + D'') \cap A \neq \emptyset$.

Recall that $\mathsf{Sym}^pI \approx \D^p$, a $p$-simplex. By Lemma \ref{lem4.1},  $\mathsf{Sym}^mB^2_+ \approx B^{2m}_+$, a half-ball. Let  $\delta B^{2m}_+ \subset \d(B^{2m}_+)$ denote the northern hemisphere in the boundary of the ball  $B^{2m} \supset B^{2m}_+$. It corresponds to the divisors $D''$ with the property $\sup(D'') \cap A \neq \emptyset$.

In new notations, the section $\s_\omega$ is  the set of pairs $D' \in \D^p, D'' \in  B^{2m}_+$ such that either $D' \in \d\D^p$, or $D'' \in \delta B^{2m}_+$, or both.  As a result, we get a homeomorphism
$$\s_\omega \approx  \big(\d\D^p \times B^{2m}_+\big) \cup_{\d\D^p \times \delta B^{2m}_+} \big(\D^p \times \delta B^{2m}_+\big)$$
whose target topologically is a $(d - |\omega|' -1)$-ball. Indeed, as the formula above testifies, $\s_\omega$ is obtained from the sold torus $T := S^{p-1}\times D^{2m}$ by a attaching a $p$-handle along $S^{p-1}\times D^{2m-1} \subset \d T$.  

Therefore $e_\omega$, an infinite positive cone over the ball $\s_\omega$, is homeomorphic to a positive cone in $\R^{d - |\omega|'}$ with a closed $(d - |\omega|' -1)$-ball base .
\smallskip

Now we proceed  to describe the attaching maps (see (\ref{eq4.3}) and (\ref{eq4.4})) for the cells $e_\omega$. With this  goal in mind, we will ``partially compactify" the pure stratum $\mathsf R_\omega$ in order to form an ``honest" $(d - |\omega|')$-cell $e_\omega$ and  will show how to attach its boundary $\d e_\omega$ to the strata $\{\mathsf R_{\tilde\omega}\}_{\tilde\omega \in \omega_\succ}$ of  dimensions smaller than $\dim(\mathsf R_\omega)$. This cell $e_\omega$ can be regarded as a \emph{resolution} of the real variety $\mathsf R_{\omega_\succeq}$.

%Keep in mind that our  goal is to describe how various  $(d - |\omega|')$-cells $e_{\omega}$ are attached to each other, as they, with the held of the map $V$, define the structure of a cell complex on $\R^d_{\mathsf{coef}}$ and on its subvarieties $\{\mathsf R_{\tilde\omega}\}_{\tilde\omega}$. 

Consider the maps
\begin{eqnarray}\label{eq4.2}
\Theta_\omega:\,  e_\omega := \Pi_\omega \times \mathsf{Sym}^{\frac{d - |\omega|}{2}}\H  \longrightarrow (\mathsf{Sym}^d\C)^\tau
\end{eqnarray}
defined by the formula $\Theta_\omega(D' \times D'') = D' + D'' + \tau(D'')$, where $D' \in  \Pi_\omega$, $D'' \in \mathsf{Sym}^{\frac{d - |\omega|}{2}}\H$, $\tau(D'')$ stands for the complex conjugate of the divisor $D''$, and "$+$" denotes the sum of divisors in $\C$. %(Recall that, for any two divisors, $D_1$ and $D_2$, their sum $D_1 + D_2$ has the support $sup(D_1) \cup sup(D_2)$. The multiplicity attached to each point $z$ from $sup(D_1 + D_2)$ is sum of the multiplicities attached to $z$ in $D_1$ and $D_2$.)

As formula (\ref{eq4.2}) testifies,  each map $\Theta_{\omega}$ doubles the real part $D''_\R$ of each divisor $D'' \in \mathsf{Sym}^m\H$ and thus mimics the merging of conjugate pairs of complex roots into the appropriate real roots of even multiplicity. 

%KEEP THIS !!!

%%%Consider a partition $\Xi$ of  $sup(\omega)$ into disjoint union index sets $\{J_r\}_r$ such that each $J_r$ consists of consecutive indices and any element of $J_r$ is smaller than any element of $J_{r+1}$.  We denote by $\Xi(\omega)$ the function that takes $r$ to $\sum_{i \in J_r} \omega(i)$; thus, $\Xi(\omega) \prec \omega$ in $\Omega_{|\omega|}$. These combinatorial data give rise to the barycentric map $\psi_{\omega, \Xi}:  \Pi_\omega \to \Pi_{\Xi(\omega)}$. For each real divisor $D = \oplus_i \omega(i) x_i$\footnote{Here we are using the notation "$\oplus$" for a formal sum of points in $x_i \in \R$ with their multilicities $\omega(i)$}, the map is defined by the formula 
%\begin{eqnarray}
%\psi_{\omega, \Xi}(D) = \oplus_r \; \big[\sum_{i \in J_r} \omega(i)\big] \cdot \frac{\sum_{i \in J_r} \omega(i) x_i}{\sum_{i \in J_r} \omega(i)}
%\end{eqnarray}
%that replaces the set of points $\{x_i\}_{i \in J_r}$ with the masses $m(x_i) = \omega(i)$ by a single point $\bar x_r : = (\sum_{i \in J_r} \omega(i) x_i)/(\sum_{i \in J_r} \omega(i))$ of the combined mass $\bar m_r = \sum_{i \in J_r} m(x_i)$  and located at the center of gravity of the subsystem. The map $\psi_{\omega, \Xi}$ mimics a  merging of adjacent groups of real roots.

%So, we get a merging map  
%\begin{eqnarray}
%\Psi^m_{\omega, \Xi;\; l}: \Pi_\omega \times \mathsf{Sym}^m_l(H) \stackrel{\psi_{\omega, \Xi} \times id}{\longrightarrow}    \Pi_{\Xi(\omega)} \times \mathsf{Sym}^m_l(H)
%\end{eqnarray}

%END KEEP THIS !!!

Note that the restriction of $\Theta_\omega$ to $e_\omega^\circ :=  \Pi_\omega^\circ \times \mathsf{Sym}^{\frac{d - |\omega|}{2}}\H^\circ$,  the interior of $e_\omega$,  is a homeomorphism onto the pure stratum $(\mathsf{Sym}^d\C)_\omega^\tau$.

%For each $\omega \in \Omega_{[0, d]}$ with $|\omega| \equiv d \;(2)$, we consider the cell  $e_\omega: = \Pi_\omega \times \mathsf{Sym}^m(H)$, where $m = \frac{1}{2}(d - |\omega|)$. 
The restriction $\Theta_\omega^\d$ of $\Theta_\omega$  to  the boundary $\d e_\omega $ provides us with the attaching maps
\begin{eqnarray}\label{eq4.3}
\big\{\Theta_\omega^\d:  \d e_\omega  \to (\mathsf{Sym}^d\C)^\tau\big\}_{\omega \in \Omega_{\langle d]}}. 
\end{eqnarray}
By the very construction of $e_\omega$, the $\Theta_\omega$-image of $\d e_\omega$ belongs to the union of strata  $\big\{(\mathsf{Sym}^d\C)_{\tilde\omega}^\tau \big\}_{\tilde\omega}$,  where $\tilde\omega \in \omega_\succeq$. Indeed, if $D' \times D'' \in  \d e_\omega$, then either $D' \in \d\Pi_\omega$, or $D'' \in \d(\mathsf{Sym}^{\frac{d - |\omega|}{2}}\H)$. In the first case, $D'$ is obtained from some $D \in \Pi_\omega^\circ$ via  merge operations;  thus $(D' + D'' + \tau(D''))_\R$ is obtained from $(D + D'' + \tau(D''))_\R$ by the same merges.  In the second case, $(D' + D'' + \tau(D''))_\R$ can be obtained from $D'$ by inserting $(D'' + \tau(D'')_\R$.

Therefore the maps $\{\Theta_\omega^\d\}$ from (\ref{eq4.3}) define cellular structures on the real affine variety $(\mathsf{Sym}^d\C)^\tau$ and its subvarieties $\{(\mathsf{Sym}^d\C)_{\hat\omega\succeq}^\tau\}_{\hat\omega}$. 
\smallskip

We notice that all the maps $\Theta_\omega$ are equivariant under the $\R^\ast_+$-actions $\psi_t$ and $\Psi_t$ in the source and target spaces, respectively,  so that the attaching maps are consistent with the cone structures  of $e_\omega$ and of  the strata $\{(\mathsf{Sym}^d\C)_{\hat\omega}^\tau\}$. Moreover, the sections $\s_\omega$ are mapped by  $\Theta_\omega$ to some sections $\mathsf S_\omega \subset \mathsf R_\omega$ of the flow $\Psi_t$. The space $\mathsf S_\omega$ is defined  as the set of real polynomials with all their roots residing in the ball $B^2 \subset \C$, but not in its interior, and with the combinatorics of the real roots being prescribed by the poset $\omega_\succeq \subset \Omega_{\langle d]}$.

With the help of the Vi\`{e}te homeomorphism $V$, the maps
\begin{eqnarray}\label{eq4.4}
\{\Phi_\omega : e_\omega  \stackrel{\Theta_\omega}{\longrightarrow} (\mathsf{Sym}^d\C)^\tau \stackrel{V}{\longrightarrow} \mathsf R_{\omega_\succeq} \subset \R^d_{\mathsf{coef}}\}_{\omega \in \Omega_{\langle d]}}\\ \nonumber
\{\Phi_\omega^\d :  \d e_\omega  \stackrel{\Theta_\omega^\d}{\longrightarrow} (\mathsf{Sym}^d\C)^\tau \stackrel{V}{\longrightarrow} \mathsf R_{\omega_\succ} \subset \R^d_{\mathsf{coef}}\}_{\omega \in \Omega_{\langle d]}}
\end{eqnarray}
define cellular structures in $\R^d_{\mathsf{coef}}$ and its subvarieties $\{\mathsf R_{\hat\omega_\succeq}\}_{\hat\omega}$. Again, the attaching maps $\{\Phi_\omega, \Phi_\omega^\d\}_\omega$ are $\R^\ast_+$-equivariant. They consistent with the cone structures in the strata $e_{\hat\omega}$ and $\mathsf R_{\hat\omega}$. In particular, we get the maps:
\begin{eqnarray}\label{eq4.5}
\{\Phi_\omega : \s_\omega  \stackrel{\Theta_\omega}{\longrightarrow} (\mathsf{Sym}^d\C)^\tau \stackrel{V}{\longrightarrow} \mathsf S_{\omega_\succeq} \subset S^{d -1}_{\mathsf{coef}}\}_{\omega \in \Omega_{\langle d]}}\\ \nonumber
\{\Phi_\omega^\d :  \d \s_\omega  \stackrel{\Theta_\omega^\d}{\longrightarrow} (\mathsf{Sym}^d\C)^\tau \stackrel{V}{\longrightarrow} \mathsf S_{\omega_\succ} \subset S^{d -1}_{\mathsf{coef}}\}_{\omega \in \Omega_{\langle d]}}
\end{eqnarray}
Here $S^{d -1}_{\mathsf{coef}}$ denotes the space of  degree $d$ real monic polynomials whose roots are in $B^2$, but not in its interior. With the help of $\Psi_t$, the space $S^{d -1}_{\mathsf{coef}}$ is diffeomorphic to the standard sphere $S^{d -1}$. This completes the proof of the third bullet .
\end{proof}

Results, similar to the ones described  in Theorem \ref{th4.1},  are valid for the space of real degree $d$ monic polynomials with a fixed coefficient $d\cdot\a$ of $z^{d - 1}$.

\begin{theorem}\label{th4.2} Let $\omega \in \Omega_{\langle d]}$ and $\a \in \R$. We denote by $\omega_\star: 1 \to d$ the minimal element of the poset $\Omega_{\langle d]}$. The following properties hold:
\begin{itemize}
\item the stratum $\mathsf R_{\a, \omega}\subset \R^{d -1}_{\mathsf{coef}, \a}$ is an open ball of codimension $|\omega|'$, 
\item for each $\hat \omega \in \Omega_{\langle d]}$, the strata $\{\mathsf R_{\a, \omega}\}_{\omega \preceq \hat \omega}$ give rise to a cellular structure on the affine variety $\mathsf R_{\a, \hat\omega_\succeq} \subset \R^{d -1}_{\mathsf{coef}, \a}$. The attaching maps $$\Phi_{\a,\omega}^\d : \d e_{\a,\omega} \to \mathsf R_{\a, \omega_\succeq} \setminus \mathsf R_{\a, \omega}$$ for the cells $e_{\a, \omega}$ are described by formulas similar to formulas  (\ref{eq4.4})-(\ref{eq4.5}),
\item the space $\R^{d -1}_{\mathsf{coef}, \a}$ admits  a $1$-parameter flow $\Psi_t^\a$ which has a single stationary point $O_\a$ (a source), is transversal to each sphere $S^{d - 2}_{\mathsf{coef}, \a}$, centered on $O_\a$, and preserves each stratum $\mathsf R_{\a, \omega}$. Thus, the $\Omega_{\langle d]}$-labeled cellular structure on $\R^{d - 1}_{\mathsf{coef}, \a}$ is a cone over a similar  $(\Omega_{\langle d]} \setminus \omega_\star)$-labelled cellular structure on the sphere $S^{d - 2}_{\mathsf{coef}, \a}$.
\end{itemize}
\end{theorem}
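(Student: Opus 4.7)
The statement closely parallels Theorem~\ref{th4.1}, and my plan is to re-use its constructions after two preparatory moves: a translation that reduces everything to the balanced case $\a = 0$, and a linear section that cuts the cells of Theorem~\ref{th4.1} by the balancing hyperplane.

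\textbf{Reduction to $\a = 0$.} The substitution $P(z) \mapsto P(z+\a)$ induces a smooth homeomorphism $\R^d_{\mathsf{coef}} \to \R^d_{\mathsf{coef}}$ which shifts every root by $-\a$. Hence it preserves the combinatorics $\omega$ of the real divisor and carries $\R^{d-1}_{\mathsf{coef}, \a}$ onto the ``depressed'' subspace $\R^{d-1}_{\mathsf{coef}, 0}$. It identifies $\mathsf R_{\a, \omega}$ with $\mathsf R_{0, \omega}$, the polynomial $(z-\a)^d$ with $z^d$, and the $\a$-centered dilation flow $\psi_t^\a: z \mapsto \a + t(z-\a)$ with the origin-centered dilation $\psi_t$ from Theorem~\ref{th4.1}. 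Thus it suffices to prove the theorem for $\a = 0$, and the point $O_\a$ will be the Vi\`ete image of $d\{\a\}$, i.e.\ the minimal element $\omega_\star : 1 \to d$.

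\textbf{First bullet.} From the proof of Theorem~\ref{th4.1}, $(\mathsf{Sym}^d\C)^\tau_\omega$ is Vi\`ete-homeomorphic to the open cell $\Pi_\omega^\circ \times \mathsf{Sym}^m\H^\circ \approx \Pi_\omega^\circ \times \C^m$ of dimension $d - |\omega|'$. In these coordinates (real roots $y_1, \dots, y_p$ with multiplicities $\omega(1), \dots, \omega(p)$, complex pairs $z_1, \dots, z_m$), the balancing constraint reads
\[
\sum_{i=1}^{p} \omega(i)\, y_i \; +\; 2\sum_{k=1}^{m}\mathsf{Re}(z_k) \;=\; 0.
\]
This is a non-degenerate affine equation (the coefficient $\omega(1) \geq 1$ is nonzero), so its solution set inside the open cell is an open affine half-dimensional slice, an open ball of dimension $d - |\omega|' - 1 = (d-1) - |\omega|'$, i.e.\ an open ball of codimension $|\omega|'$ in $\R^{d-1}_{\mathsf{coef}, 0}$.

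\textbf{Second and third bullets.} Define $e_{0,\omega} \subset e_\omega$ and $\s_{0,\omega} \subset \s_\omega$ by the same linear balancing equation, and take $\Phi_{0,\omega} := V \circ \Theta_\omega|_{e_{0,\omega}}$ with boundary restriction $\Phi_{0,\omega}^{\d}$, exactly as in formulas (\ref{eq4.4}). The $\Theta_\omega$-image of $e_{0,\omega}$ lies in the $\a = 0$ variety because $\Theta_\omega$ adds $D'' + \tau(D'')$, which has zero center of gravity, to $D'$, so ``balanced" is preserved. The flow $\psi_t$ from Theorem~\ref{th4.1} preserves the hyperplane of balanced divisors (origin-centered dilation fixes the center of gravity at $0$), so the induced flow $\Psi_t$ restricts to a flow $\Psi_t^0$ on $\R^{d-1}_{\mathsf{coef}, 0}$ with $\mathbf 0$ as unique source; transversality to spheres $S^{d-2}_{\mathsf{coef}, 0}$ and invariance of the strata $\mathsf R_{0, \omega}$ are inherited from the corresponding properties of $\Psi_t$. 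Pushing everything back through the translation gives $\Psi_t^\a$ with fixed point $O_\a$.

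\textbf{Main obstacle.} The delicate technical point is verifying that $\s_{0,\omega}$ is homeomorphic to a closed $(d - |\omega|' - 2)$-ball and that $e_{0,\omega}$ is a positive $\psi_t$-cone over it. The plan is to observe that $e_\omega \approx \R^{|\sup(\omega)|} \times \R^{2m}_+$ is a closed half-space of dimension $d - |\omega|'$ and the balancing equation is non-degenerate with respect to the ``interior" coordinates (for instance, one can solve it for $y_1$ without constraining the half-space direction coming from $\mathsf{Sym}^m\H$). Consequently $e_{0,\omega}$ is itself a closed half-space of dimension $d - |\omega|' - 1$. Since the balancing hyperplane and the cellular decomposition of $e_\omega$ are both $\psi_t$-invariant, $e_{0,\omega}$ inherits the $\psi_t$-cone structure of $e_\omega$, and its ball base is precisely the intersection with $\s_\omega$, namely $\s_{0,\omega}$. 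This argument must be repeated face-by-face to match the handle decomposition $\s_\omega \approx (\d\D^p \times B^{2m}_+) \cup (\D^p \times \delta B^{2m}_+)$ from Theorem~\ref{th4.1}, checking transversality of the balancing hyperplane on each piece; this is the main place where honest bookkeeping is required, but no essentially new idea beyond what is used in Theorem~\ref{th4.1} is needed.
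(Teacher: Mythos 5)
Your proposal is correct and follows essentially the same route as the paper's own proof: there, too, $e_{\a,\omega}$ and $\s_{\a,\omega}$ are cut out of $e_\omega$ and $\s_\omega$ by the $\a$-balancing equation on $D'+D''+\tau(D'')$, the attaching maps are the restrictions of $\Theta_\omega$ and $\Phi_\omega$ from (\ref{eq4.2})--(\ref{eq4.4}), the cone structure comes from the dilation flow centered at the balancing point, and the first bullet is obtained by sliding $\mathsf R_\omega$ onto $\mathsf R_{\a,\omega}$ along a translation flow, so your preliminary substitution $P(z)\mapsto P(z+\a)$ is merely a repackaging of that translation (and, as in the paper, the identification of $\s_{\a,\omega}$ with a closed $(d-|\omega|'-2)$-ball is the step that still requires the face-by-face bookkeeping you flag). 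Two small slips to correct, neither affecting the argument: $D''+\tau(D'')$ has a real but generally nonzero center of gravity, the image of $e_{\a,\omega}$ lying in the balanced locus simply because the balancing condition is imposed on the sum $D'+D''+\tau(D'')$ itself; and $\mathsf{Sym}^{|\sup(\omega)|}\R\approx\Pi_\omega$ is a closed half-space rather than $\R^{|\sup(\omega)|}$, although its product with $\mathsf{Sym}^{m}\H$ is still the closed half-space of dimension $d-|\omega|'$ that you use.
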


\begin{proof} We turn to the $\a$-balanced divisors, which tell a similar story. 

Consider a flow $\phi_t: \{z_1, \dots , z_d\} \to \{z_1- \frac{\a}{d}t,\, \dots ,\, z_d - \frac{\a}{d}t\}$ in $(\mathsf{Sym}^d\C)^\tau$. For $t =1$, it maps $\{z_1, \dots , z_d\}$ to $\{z_1- \frac{\a}{d}, \dots , z_d- \frac{\a}{d}\}$, a $\a$-balanced configuration. Note that $\phi_t$ preserves the $\omega$-stratification in $(\mathsf{Sym}^d\C)^\tau$. Hence, with the help of the Vi\`{e}te map $V$, $\phi_t$ gives rise to a retraction of the stratum $\mathsf R_\omega$ onto the stratum $\mathsf R_{\a, \omega}$. Therefore, $\mathsf R_{\a, \omega}$ is an open ball of dimension $d- |\omega|' -1$ and codimension  $|\omega|'$ in $\R_{\mathsf{coef}, \a}^d$.

Next, consider the flow $\Psi_t^\a : \R^d_{\mathsf{coef}, \a} \to \R^d_{\mathsf{coef}, \a}$, induced with the help of the Vi\`{e}te map $V$ by a flow $\{\psi_t^\a\}_{t > 0}$ in the root space $(\mathsf{Sym}^d_\a\C)^\tau$. The flow  $\psi_t^\a$ applies $t$-dilatations that are centered on the point $o_\a := \a/d \in \R$  to each $\tau$-symmetric and $\a$-ballanced divisor $D$ in $\C$. The dilatation $\psi_t^\a$ keeps the center of gravity of the weighted configuration $\psi_t^\a(D)$ at the point $o_\a \in \C$, so that  $\psi_t^\a(D)$ is $\a$-balanced for all $t$. Evidently, these dilatations preserve the $\omega$-type of each $\a$-balanced configuration in $\C$. As a result, each $\mathsf R_{\a, \omega}$ is $\Psi_t^\a$-invariant. 

By a direct computation, any $\Psi_t^\a$-trajectory is transversal to the spheres $S^{d - 2}_{\mathsf{coef}, \a} \subset \R^d_{\mathsf{coef}, \a}$ centered on the point $O_\a := V(o_\a, \dots, o_\a)$. The exception is the trivial trajectory through $O_\a$. 
\smallskip

We denote by $e_{\a, \omega}$ the subset of $e_{\omega}$ (see (\ref{eq4.1})) that consists of pairs $(D', D'')$, where $D' \in \Pi_\omega$, $D'' \in \mathsf{Sym}^m\H$, and such that the divisor $$D' + D'' + \tau(D'')$$ is $\a$-balanced. In other words, if $D' = \sum_i m_i x_i$ and $D'' = \sum_k m_k z_k$, then $e_{\a, \omega}$ is the space of a fibration  over the base $\mathsf{Sym}^m\H \approx \R_+^{2m}$ with the cell-like fiber $F_{D''} \subset \Pi_\omega$ (over the point $D''$) that is defined by the constraint $$\sum_i m_i x_i = d\cdot\a - \sum_k 2m_k \mathsf{Re}(z_k).$$ 

Denote by  $\s_{\a, \omega}$ the subset of $e_{\a, \omega}$ that consists of $\a$-balanced pairs $(D', D'')$ such that $\sup(D') \cup \sup(D'')$ is contained in the $\a$-centered unit ball $B^2_+(\a)$, but not in its interior.  As in the ``unbalanced" case, $\s_{\a, \omega}$ is a section of the $\psi^\a_t$-flow in $e_{\a, \omega}$. This results in $e_{\a, \omega}$ being a positive cone over the base $\s_{\a, \omega}$. By an argument as in the proof of Theorem \ref{th4.1}, $\s_{\a, \omega}$ is homeomorphic to a closed $(d - 2 -|\omega|')$-ball.

For any real $\a$,  the maps $\Theta_{\omega}$ in (\ref{eq4.2}), being restricted to $e_{\a, \omega} \subset e_{\omega}$, produce the maps $\Theta_{\a, \omega} : e_{\a, \omega} \to (\mathsf{Sym}^d_\a\C)^\tau$ which, with the help of $V$, give rise to the maps 
$$\Phi_{\a, \omega} : e_{\a, \omega} \to \mathsf R_{\a, \omega_\succeq} \subset \R_{\mathsf{coef}, \a}^{d -1}$$ 
, already familiar from the ``unbalanced" formulas (\ref{eq4.4}). 
They  define a cellular structure on the real variety $\R_{\mathsf{coef}, \a}^{d -1}$ and its subvarieties $\mathsf R_{\a, \tilde\omega_\succeq}$. Analogously, the maps 
$$\Phi_{\a, \omega} : \s_{\a, \omega} \to \mathsf S_{\a, \omega_\succeq} \subset S_{\mathsf{coef}, \a}^{d - 2}$$  define a cellular structure on the sphere $S_{\mathsf{coef}, \a}^{d -2}$ and its strata $\mathsf S_{\a, \tilde\omega_\succeq}$. Here $S_{\mathsf{coef}, \a}^{d -2}$ is the space of  real monic polynomials $P(z)$ of the form  $$z^d + \a z^{d -1} + \dots$$  such that the $P$-roots are contained in $B^2(\a)$, but not in its interior. Note that $P(z) \neq (z - \frac{\a}{d})^d$---the apex of the cone. Similarly, $\mathsf S_{\a, \omega_\succeq} \subset S_{\mathsf{coef}, \a}^{d -2}$,  is the set of such polynomials $P(z)$ whose real roots have the combinatorics that is prescribed by the poset $\omega_\succeq$. 
\end{proof}

%\begin{example} 
\noindent{\bf Example 4.1.} Let $\omega_\star$ be the minimal element of the poset $\Omega_{\langle d]}$. Theorem \ref{th4.1} claims that the sphere $S^{d - 1}$ admits a cellular structure whose sells $\{\mathsf S_\omega\}$ of codimension $|\omega|'$ are indexed by the elements of the poset $(\Omega_{\langle d]} \setminus \omega_\star,\, \succ)$.  Moreover, $\mathsf S_{\omega'_\succeq} \subset \mathsf S_{\omega''_\succeq}$ if and only if $\omega'' \succeq \omega'$. In other words, the poset $(\Omega_{\langle d]}  \setminus \omega_\star,\, \succ)$ provides a complete set of instructions for  assembling $S^{d - 1}$! You may glance at Fig. 3 to examine how the assembly works for $S^2$. Note that this cellular structure is not a regular one: the closures $\mathsf S_{\omega_\succeq}$ of cells $\mathsf S_\omega$ are not necessarily closed balls. \hfill\qed
\smallskip

\smallskip 

The next lemma describes the attaching maps $\Phi_\omega^\d:  \d e_\omega \to R^d_{\mathsf{coef}}$ for the cells $e_\omega$ as being finitely ramified over their images. The degrees of ramification are described by combinatorics-flavored numbers  $\{o(\omega, \tilde\omega)\}$ whose exact Definition \ref{def4.1} will be provided below. 

\begin{lemma}\label{lem4.2} Each map $\Phi_\omega:  e_\omega \to \R^d_{\mathsf{coef}}$ from (\ref{eq4.4})  has finite fibers, and is a bijection on the interior of the cell $e_\omega$. 

For any point-polynomial  $Q \in \Phi_\omega(\d e_\omega)$, the cardinality of the fiber  $\Phi_\omega^{-1}(Q)$ is equal to the number $o(\omega, \tilde\omega)$ introduced in Definition \ref{def4.1} below. Here $\tilde\omega$ denotes the combinatorial pattern of the real divisor $D_\R(Q)$.
\end{lemma}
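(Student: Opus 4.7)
The plan is to analyze the fibers of $\Phi_\omega = V \circ \Theta_\omega$ by reducing to those of $\Theta_\omega$, since the Vi\`{e}te map $V : (\mathsf{Sym}^d\C)^\tau \to \R^d_{\mathsf{coef}}$ is a bijection. Fix $Q \in \mathsf R_{\tilde\omega}$ and decompose its complex divisor as $D_\C(Q) = D_\R(Q) + E + \tau(E)$, where $E$ is the part of $D_\C(Q)$ supported in the open upper half-plane $\H^\circ$. A preimage $(D', D'') \in \Pi_\omega \times \mathsf{Sym}^{(d-|\omega|)/2}\H$ of $D_\C(Q)$ under $\Theta_\omega$ satisfies $D' + D'' + \tau(D'') = D_\C(Q)$. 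Writing $D'' = D''_{\H^\circ} + D''_\R$ and equating non-real and real parts separately forces $D''_{\H^\circ} = E$, uniquely determined by $Q$, and $D' + 2\, D''_\R = D_\R(Q)$ on $\R$. Since $D''_\R = \tfrac{1}{2}(D_\R(Q) - D')$ is then determined by $D'$, the fiber $\Phi_\omega^{-1}(Q)$ is in bijection with the set of admissible $D' \in \Pi_\omega$.

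Next I would enumerate these admissible $D'$'s combinatorially. Writing $\sup(\tilde\omega) = \{z_1 < \cdots < z_r\}$ and $p := |\sup(\omega)|$, a divisor $D' \in \Pi_\omega$ with $\sup(D') = \{z_{i_1} < \cdots < z_{i_q}\} \subset \sup(\tilde\omega)$ corresponds to an ordered partition of $\{1, \dots, p\}$ into consecutive intervals $I_1, \dots, I_q$ with block-sums $\sum_{k \in I_j}\omega(k) = D'(z_{i_j})$. The requirement that $D''_\R = \tfrac{1}{2}(D_\R(Q) - D')$ be a non-negative integral divisor translates into the inequalities $\sum_{k \in I_j}\omega(k) \leq \tilde\omega(i_j)$ and parities $\sum_{k \in I_j}\omega(k) \equiv \tilde\omega(i_j) \pmod{2}$ at each chosen position, together with $\tilde\omega(i) \equiv 0 \pmod{2}$ at every unchosen one. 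These are exactly conditions $(1)$--$(3)$ of Definition \ref{def2.2} applied to the monotone map $\a: \sup(\omega) \to \sup(\tilde\omega)$ that sends the block $I_j$ to $i_j$, so the resulting set of pairs (chosen subset, ordered block partition) is naturally identified with $Mor(\omega, \tilde\omega)$ in the category $\mathbf\Omega$---which should match the number $o(\omega, \tilde\omega)$ of the forthcoming Definition \ref{def4.1}.

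For the interior $e_\omega^\circ = \Pi_\omega^\circ \times \mathsf{Sym}^m \H^\circ$ the constraints degenerate: $D''_\R = 0$ and $D'$ has strictly increasing support of cardinality exactly $p$, forcing $\tilde\omega = \omega$, the trivial block partition into singletons, and $|\Phi_\omega^{-1}(Q)| = 1$. Finiteness of all fibers is immediate, as there are only finitely many subsets of $\sup(\tilde\omega)$ and consecutive ordered partitions of a finite set. The principal obstacle is essentially bookkeeping---verifying that the enumeration above agrees verbatim with Definition \ref{def4.1}, and tracking with care the point that several consecutive $\omega$-entries can have their $y_k$-coordinates collapse to a single point of $\sup(\tilde\omega)$, which is precisely the mechanism by which the boundary walls of $\Pi_\omega$ produce non-trivial fiber cardinalities on $\d e_\omega$. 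The $\R^\ast_+$-equivariance established in Theorem \ref{th4.1} then transfers the count from $e_\omega$ to the sphere section $\s_\omega$ without modification.
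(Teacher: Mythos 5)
Your proposal follows the same reduction as the paper: pass to $\Theta_\omega$ via the bijective Vi\`{e}te map, split $D''$ into the part in $\H^\circ$ (uniquely determined by $Q$) and the real part $D''_\R = \tfrac12(D_\R(Q)-D')$, and count admissible $D'\in\Pi_\omega$. The only real difference is the bookkeeping of the count: the paper parametrizes fibers by the function $\omega''$ (the $\sup(\tilde\omega)$-indexed multiplicity profile of $D''_\R$) and \emph{defines} $o(\omega,\tilde\omega)$ as the number of admissible $\omega''$'s, whereas you parametrize by the monotone map $\a:\sup(\omega)\to\sup(\tilde\omega)$ and land in $Mor(\omega,\tilde\omega)$. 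These are the same count, via the dictionary $\omega''(i)=\tfrac12\bigl(\tilde\omega(i)-\sum_{j\in\a^{-1}(i)}\omega(j)\bigr)$; the inverse direction uses the fact that a monotone surjection $\sup(\omega)\twoheadrightarrow\sup\bigl(K(\tilde\omega-2\omega'')\bigr)$ with the prescribed block-sums is unique when it exists (greedy initial segments), which is exactly what $K(\tilde\omega-2\omega'')\in\omega_{\succeq\mathsf M}$ requires. You flag this reconciliation as ``bookkeeping'' and do not carry it out---that is the one genuine omission; everything else (finiteness, determination of $D''_{\H^\circ}$, the forced $D''_\R$, and the degeneration on $e_\omega^\circ$ to $\tilde\omega=\omega$ with $|Mor(\omega,\omega)|=1$) is sound. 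Your observation that $o(\omega,\tilde\omega)=|Mor(\omega,\tilde\omega)|$ is actually worth recording, since it gives the paper's ad hoc constant a clean categorical meaning in the already-defined $\mathbf\Omega$; one caveat is that as literally stated Definition \ref{def4.1} asks $\omega''\in\Omega$ (consecutive support), which excludes valid profiles like $(0,1,0)$---the intended reading, consistent with the proof in the paper, is that $\omega''$ ranges over all finitely supported non-negative integer functions, and under that reading the identification with $Mor(\omega,\tilde\omega)$ goes through.
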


\begin{proof} Let us fix a degree $d$ conjugation-invariant divisor $$D = \Theta_\omega((D', D'')) := D' + D'' + \tau(D'')$$, where $D' \in \Pi_\omega$,  $D'' \in \mathsf{Sym}^m\H$, and $m := (d- |\omega|)/2$.  We denote by $\tilde\omega$ the combinatorial type of $D_\R$. %, and by $\omega$ the combinatorial type of $D'$. 
Then there exist only finitely many pairs $(D', D'')$ that deliver $D$. Indeed, since $\sup(D') \subset \R$, the divisor $D'' - D''_\R$ with the support in $\H^\circ$ is uniquely determined by $D$.  Note that $D_\R$ and $D'$ (with the support in $\R$) differ by $2D''_\R$. This leaves only a finite set of choices for $D' = D_\R -  2D''_\R$, whose support must be contained in $\sup(D_\R)$ and whose degree is bounded by $\deg(D_\R)$. %In Definition 7.9 REF we describe the combinatorics of these choices.
\smallskip

If $(D', D'') \in \d e_\omega$, then either $D' \in \d\Pi_\omega$, or $D'' \in \d(\mathsf{Sym}^m\H)$ (i.e. $\sup(D'') \cap \R \neq \emptyset$), or both. In the first case, $\omega(D')$, the combinatorial type of $D'$, is obtained from $\omega$ by a sequence of merge operations $\{\mathsf M_j\}$. In the second case, $\tilde\omega:= \omega(D_\R)$, the combinatorial type of the $\Theta_\omega$-image of $(D', D'')$,  is obtained from $\omega$ by a sequence of insert operations $\{\mathsf I_k\}$. In the third mixed case, one applies both types of operations.

As a result, the combinatorics of reconstructing $(D', D'')$ from  $D = D' + D'' + \tau(D'')$ can be described as follows. If $\tilde \omega$ is the combinatorial pattern of $D_\R$, then $\omega'$, the combinatorial pattern of $D' = D'_\R$ can be obtained by: (1) subtracting from  $\tilde\omega$ a non-negative function $2\omega''$, such that  $\tilde\omega - 2\omega''$ is again a non-negative function ($\omega''$ represents the divisor $D''_\R$), and (2) deleting all the positions $i$ where $(\tilde\omega - 2\omega'')(i) = 0$. 

Let us denote by $K$ this ``repackaging" operator from (2); it takes any histogram and deletes from it all the columns of zero height.    

Since $D' \in \d\Pi_\omega$,  we conclude that  $\omega' = K(\tilde\omega - 2\omega'')$ should be obtainable from $\omega$ via a sequence of merge operations alone. 

For each $\omega \in \Omega$, let us denote by $\omega_{\succeq\mathsf M}$ the subset of $\Omega$ that consists of elements that can be obtained from $\omega$ by the merge operations $\{\mathsf M_j\}$ alone. 

In these notations, we get that $K(\tilde\omega - 2\omega'') \in \omega_{\succeq\mathsf M}$.

\begin{definition}\label{def4.1} For any pair $\omega \succeq \tilde\omega$ in $\Omega$, consider the subset $O(\omega, \tilde\omega) \subset \Omega$ such 
that, for any $\omega'' \in O(\omega, \tilde\omega)$, the following properties hold:
\begin{itemize}
\item  the function $\tilde\omega - 2\omega'' \geq 0$
\item $K(\tilde\omega - 2\omega'') \in \omega_{\succeq\mathsf M}$
\end{itemize}
Let $o(\omega, \tilde\omega)$  denote the cardinality of $O(\omega, \tilde\omega)$. \hfill\qed
\end{definition}

%\begin{definition}
%Consider any 1-to-1 \emph{monotone} map $\s: sup(\omega) \to sup(\omega')$ such 
%that: 
%\begin{itemize}
%\item $\omega(i) \leq \omega'(\s(i))$,
%\item  $\omega'(\s(i)) - \omega(i) \equiv 0(2)$, 
%\item for any $k \in sup(\omega') \setminus \s(sup(\omega))$, the value $\omega(k) \equiv 0(2)$. 
%\end{itemize}
%Let $o(\omega, \omega')$  denote the number of such maps $\s$. 
%\end{definition}

Recall that the Vi\`{e}te map $V: (\mathsf{Sym}^d\C)^\tau \to \R^d_{\mathsf{coef}}$ is a stratification-preserving homeomorphism of the $\Omega_d$-stratified spaces; in particular, it is bijective. Therefore the cardinality of each fiber $\Phi_\omega^{-1}(Q)$, where $Q \in  \R^d_{\mathsf{coef}}$, is equal to the cardinality of the fiber $\Theta_\omega^{-1}(V^{-1}(Q))$.

Pick $Q \in \Phi_\omega(\d e_\omega)$ and let  $\tilde\omega$ denote the combinatorial type of  the divisor $D_\R(Q)$. Then, by the considerations above,  we have proved that $| \Phi_\omega^{-1}(Q)| = | \Theta_\omega^{-1}(V^{-1}(Q))| = o(\omega, \tilde\omega)$. 
\end{proof}
\smallskip

%Note that when $\omega \succeq \omega'$  In particular, if $|\omega'|'-|\omega|' =1$, 

%Let $\omega_1, \omega_2 \in \Omega_{[0, d]}$ be such that $\omega_1 \prec \omega_2$. We denote by $[\omega_1,  \omega_2]$ the set $\{\omega \in \Omega_{[0, d]}| \; \omega_1 \prec \omega \prec \omega_2\}$. 

For each $\omega \in \Omega_{\langle d]}$, let $\omega_{\prec \leadsto k}$ be the set of elements  $\tilde\omega \in \Omega_{\langle d]} $ that can produce $\omega$ as a result of a sequence of $k$ elementary operations $\{\mathsf M_i, \mathsf I_j\}$ being applied to $\tilde\omega$  (each such $\tilde\omega$ is the maximal element in a chain of $\omega$-predecessors in $\Omega_{\langle d]} $ of length $k$). Similar, let $\omega_{\preceq \leadsto k}$ be the set of elements in $\Omega_{\langle d]} $ that can produce $\omega$ by a sequence of $k$ elementary operations at most. 
\smallskip

%WORK any simplex of codimension r is contained in r simplices of codim r-1. END WORK

Next lemma is an extension of Theorem \ref{th4.1} and Theorem \ref{th4.2}. It spells out the local arrangement of cells in the direction normal to a typical  pure stratum $\mathsf R_\omega$ or $\mathsf R_{ \a, \omega}$ in $\R_{\mathsf{coef}}^d$ or in $\R_{\mathsf{coef}, \a}^{d - 1}$, respectively.

\begin{lemma}\label{lem4.3} Let $\omega \in \Omega_{\langle d]}$ and $P \in \mathsf R_\omega$. We denote by $St_\nu(P)$ the star, normal in $\R^d_{\mathsf{coef}}$ to the pure stratum $\mathsf R_\omega$  at the point $P$. This normal star is homeomorphic to the space $\R^{|\omega|'}$ and inherits the structure of a cell complex from $\R^d_{\mathsf{coef}}$.  The cells $f_{\tilde\omega} :=  St_\nu(P) \cap \mathsf R_{\tilde\omega}$ of dimension $|\omega|' - |\tilde\omega|' $ are indexed by the elements $\tilde\omega \succeq \omega$. \smallskip

The incidence of cells $\{f_{\tilde\omega}\}$ in $St_\nu(P)$ is prescribed by the partial order in the poset $\omega_\preceq$: specifically, any element $\tilde\omega \in \omega_{\prec \leadsto k}$ gives rise to a cell $f_{\tilde\omega} \subset St_\nu(P)$ of dimension $k$.  It is contained in exactly $|\tilde\omega|' + \#(\tilde\omega^{-1}(2))$
%$|\omega|' - k + 1$ 
cells $\{f_{\tilde\omega'}\}$ of dimension $k + 1$. The total number of  cells $f_{\tilde\omega} \subset St_\nu(P)$ of dimension $k$ is the cardinality of the set $\omega_{\prec \leadsto  k}$. \smallskip

Similar properties hold for the strata $\{St_\nu(P) \cap\mathsf R_{\a, \tilde\omega}\}_{\tilde\omega}$ in the  $\a$-balanced polynomial space $\R^d_{\mathsf{coef}, \a}$. %: just lower the dimensions of all cells in $\R^d_{\mathsf{coef}}$ by 1, while keeping their codimensions.
\end{lemma}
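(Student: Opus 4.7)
The plan is to reduce the statement to a product of cellular structures provided by Theorem~\ref{th4.2}, one factor per real root of $P$, together with a combinatorial bookkeeping of inverse elementary operations. The first step will be a local product decomposition of $\R^d_{\mathsf{coef}}$ near $P$: writing $P(z)=\prod_{i=1}^{p}(z-r_i)^{\omega(i)}\cdot Q(z)$, where $r_1<\cdots<r_p$ are the real roots of $P$ with multiplicities $\omega(i)$ and $Q$ is the real monic factor carrying the simple non-real conjugate pairs, I will pick pairwise disjoint discs $D_i\subset\C$ centered at the $r_i$ and a disc $E$ around $\sup(D_\C(Q))$. The implicit function theorem, applied through the Vi\`{e}te map, then provides, for every real monic $\tilde P$ in a small neighborhood of $P$, a unique factorization $\tilde P=\prod_i \tilde P_i\cdot\tilde Q$ with $\tilde P_i$ real monic of degree $\omega(i)$ and roots in $D_i$, and $\tilde Q$ real monic of degree $d-|\omega|$ with roots in $E$. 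Since $\tilde Q$ has no real roots, $D_\R(\tilde P)=\sum_i D_\R(\tilde P_i)$, so the combinatorial type of $D_\R(\tilde P)$ is the ordered concatenation $(\omega_1\,|\cdots|\,\omega_p)$ of the types of $D_\R(\tilde P_i)$, which identifies a neighborhood of $P$ with an open set in a product of coefficient spaces respecting the $\omega$-stratification.

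Next I will identify $St_\nu(P)$ with a product. Under the above factorization, $\mathsf R_\omega$ near $P$ is the locus where each $\tilde P_i(z)=(z-\tilde r_i)^{\omega(i)}$ and $\tilde Q$ stays in its open non-real stratum; its dimension is $p+(d-|\omega|)=d-|\omega|'$. A natural transversal is obtained by fixing $\tilde r_i=r_i$ and $\tilde Q=Q$ and letting each $\tilde P_i$ vary as a depressed monic polynomial of degree $\omega(i)$ centered at $r_i$, giving
\[
St_\nu(P)\;\approx\;\prod_{i=1}^{p}\R^{\omega(i)-1}_{\mathsf{coef},\,0},\qquad \dim St_\nu(P)=|\omega|'.
\]
Theorem~\ref{th4.2}, applied to each factor with center translated to $0$ and $d$ replaced by $\omega(i)$, equips $\R^{\omega(i)-1}_{\mathsf{coef},\,0}$ with a cell complex whose cells are indexed by $\omega_i\in\Omega_{\langle\omega(i)]}$ and have dimension $\omega(i)-1-|\omega_i|'$. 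The resulting product cell structure on $St_\nu(P)$ has cells indexed by tuples $(\omega_1,\dots,\omega_p)$, and each such cell lies in the stratum $\mathsf R_{\tilde\omega}$ for $\tilde\omega=(\omega_1\,|\cdots|\,\omega_p)$, with dimension $\sum_i(\omega(i)-1-|\omega_i|')=|\omega|'-|\tilde\omega|'$. A short check via Lemma~\ref{lem2.1} confirms that $\tilde\omega\succeq\omega$ iff $\tilde\omega$ admits such a block decomposition with $\omega_i\in\Omega_{\langle\omega(i)]}$.

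For the incidence count, a $(k+1)$-cell covering a $k$-cell corresponds to replacing a single block $\omega_i$ by a one-step predecessor $\omega_i'$ in $\Omega_{\langle\omega(i)]}$; within a block, the reverse elementary operations are splitting a value $\omega_i(j)\geq 2$ as an ordered pair $(a,\omega_i(j)-a)$ with $1\le a\le\omega_i(j)-1$ (reverse merge, contributing $\sum_j(\omega_i(j)-1)=|\omega_i|'$ choices), or deleting an entry equal to $2$ (reverse insert, contributing $\#(\omega_i^{-1}(2))$ choices). Summing over $i$ yields exactly $|\tilde\omega|'+\#(\tilde\omega^{-1}(2))$ covering $(k+1)$-cells, and the total count of $k$-cells equals $|\omega_{\prec\leadsto k}|$. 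The $\a$-balanced variant goes through identically after restricting to $\R^{d-1}_{\mathsf{coef},\a}$: the linear constraint on the sum of roots is transverse to the normal-slice directions (which only deform the $\tilde P_i$), so the product cell structure and its incidences are unchanged. The main obstacle is making the product decomposition rigorous and verifying that the global stratification $\{\mathsf R_{\tilde\omega}\}$ pulls back cleanly to a product of stratifications on the factors; once that is settled, the dimension and incidence identities reduce to Theorem~\ref{th4.2} and the block bookkeeping above.
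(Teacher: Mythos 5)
Your proposal is correct, but it takes a genuinely different route from the paper's proof of this lemma. The paper argues ``from above'': it appeals to Theorem \ref{th4.1} for the cell structure of $\R^d_{\mathsf{coef}}$, asserts that the normal slice $St_\nu(P)$ is transversal at $P$ to every closed stratum $\mathsf R_{\tilde\omega_\succeq}\supset \mathsf R_\omega$ (so that $St_\nu(P)\cap\mathsf R_{\tilde\omega_\succeq}$ is a germ of dimension $|\omega|'-|\tilde\omega|'$), and then gets the incidence number by directly counting the elementary resolutions of $\tilde\omega$ (ordered splittings of each value $\tilde\omega(i)$, contributing $|\tilde\omega|'$) and reductions (deletions of entries equal to $2$, contributing $\#(\tilde\omega^{-1}(2))$). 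You instead build the slice ``from below'': the disjoint-disc factorization $\tilde P=\prod_i\tilde P_i\cdot\tilde Q$, the identification $St_\nu(P)\approx\prod_i\R^{\omega(i)-1}_{\mathsf{coef},\,\a_i}$, and the blockwise use of Theorem \ref{th4.2}, with the block-decomposition characterization of $\tilde\omega\succeq\omega$ supplied by Lemma \ref{lem2.1}. This is essentially the localization that the paper itself performs only later, in Section 5 (the map $A_\omega$ of (\ref{eq5.2})--(\ref{eq5.3}) and the proof of Theorem \ref{th5.3}); its advantage is that it actually justifies the transversality of the normal slice to all incident strata, which the paper's proof of the lemma only asserts, and it exhibits the star concretely as a product of balanced coefficient spaces. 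One caveat you should state explicitly: your product cells are labelled by tuples $(\omega_1,\dots,\omega_p)$, and the concatenation map to $\Omega_{\langle d]}$ is not injective --- for $\omega=(2,2)$ and $\tilde\omega=(2)$ there are two decompositions, $((2),\emptyset)$ and $(\emptyset,(2))$, i.e.\ two distinct $1$-cells of the slice inside the single stratum $\mathsf R_{(2)}$. Hence your final counts match the lemma only under the lemma's own conventions: the total number of $k$-cells equals $\#(\omega_{\prec\leadsto k})$ when $f_{\tilde\omega}:=St_\nu(P)\cap\mathsf R_{\tilde\omega}$ is counted once per label (it may be disconnected), and the incidence number $|\tilde\omega|'+\#(\tilde\omega^{-1}(2))$ counts pairs (block, reverse operation), i.e.\ local branches, rather than distinct adjacent cells (deleting either entry of a block $(2,2)$ produces the same predecessor tuple). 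Since the paper's proof uses exactly the same counting convention, this is a point to make explicit rather than a gap in your argument.
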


\begin{proof} %Put $e_\omega^\circ = \mathsf R_\omega^\circ$ or $e_\omega^\circ = \mathsf R_{\a, \omega}^\circ$. As before, let $e_\omega$ be the closure of $e_\omega^\circ$. %For a given $P \in e_\omega^\circ$,  the link $L_\nu(P)$ is  the boundary of the star $St_\nu(P)$ that is normal in $\R^d$ to the cell $e_\omega^\circ$ at $P$. By Lemma 6.7 REF, $dim(St_\nu(P)) = |\omega|'$. 
Let  $St(\mathsf R_\omega)$ be  the union of all cells $\{\mathsf R_{\tilde\omega}\}$ in $\R^d_{\mathsf{coef}}$ such that  $\mathsf R_{\tilde\omega_\succeq} \supset \mathsf R_\omega$. In contrast with the standard definition of the star, here we ignore the cells $\mathsf R_{\tilde\omega}$ such that  $\mathsf R_{\tilde\omega_\succeq} \cap \mathsf R_\omega \neq \emptyset$, but $\mathsf R_{\tilde\omega_\succeq}$ does not contain $\mathsf R_\omega$.  %Denote by $L(e_\omega)$ the boundary of  $St(e_\omega)$. ????

Recall that any elementary operation $\mathsf M_j$ or $\mathsf I_j$ from (\ref{eq2.1})-(\ref{eq2.3}), being applied to an element $\omega \in \Omega$,  lowers its reduced norm $|\omega|'$ by $1$. Thus  each $\mathsf R_{\tilde\omega}$  from $St(\mathsf R_\omega)$ has $\textup{codim}(\mathsf R_\omega, \mathsf R_{\tilde\omega_\succeq}) = k$, if and only if, $\tilde\omega \in  \omega_{\prec  \leadsto k}$, that is,  if $\tilde\omega$ can be obtained from $\omega$ by a sequence of $k$ elementary resolutions $\{\mathsf M_j^{-1}\}$\footnote{These mimic a bifurcation of a real multiple root in a pair of real roots of the same combined multiplicity.} and elementary reductions by two $\{\mathsf I_j^{-1}\}$\footnote{These mimic the resolution of a real root $\a$ of multiplicity $2$ into two simple complex-conjugate roots.}.  % (see Definition 6.4 REF ). %(which results from deforming a real root of multiplicity two by a pair of complex conjugate roots). 
In particular, there are exactly as many cells $\mathsf R_{\tilde\omega}$ of dimension $\dim(\mathsf R_\omega) + 1$ as there are elementary resolutions and reductions of $\omega$.  Each value $\omega(i)$ can be "resolved" in $\omega(i) - 1$ distinct ways: $$(\omega(i)-1, 1),\, (\omega(i)-2, 2),\, \dots ,\,  (1, \omega(i) -1)$$ (the order does matter!), and each $\omega(i) = 2$ can be ``deleted" or ``reduced". Therefore, the total number of  elementary resolutions is $\sum_i (\omega(i) -1) = |\omega|'$.  The total number of reductions is $\#(\omega^{-1}(2))$, the cardinality of $\omega^{-1}(2)$. All together, there are $|\omega|' + \#(\omega^{-1}(2))$ elementary operations applicable to $\omega$.  Stated differently, the \emph{multiplicity} of each cell $\mathsf R_\omega$---the number of $(\dim(\mathsf R_\omega) + 1)$-cells containing it---is  the \emph{codimension} of $\mathsf R_\omega$ in $\R^d_{\mathsf{coef}}$ (or in $\R^{d-1}_{\mathsf{coef}, \a}$) plus  $\#(\omega^{-1}(2))$.
\smallskip

By Theorem \ref{th4.1}, $St_\nu(P)$, the germ  of  the vector space normal to $\mathsf R_\omega$ at $P$, has dimension $|\omega|'$. It is transversal at $P$ to each cell $\mathsf R_{\tilde\omega_\succeq}$ that contains $\mathsf R_\omega$. Thus, $St_\nu(P) \cap \mathsf R_{\tilde\omega_\succeq}$ is a germ-cell $f_{\tilde\omega}$ of dimension $|\omega|' - |\tilde\omega|'$.  The intersection  $\mathsf R_{\tilde\omega} \cap L_\nu(P)$ with the normal link $L_\nu(P)$---the boundary of $St_\nu(P)$--- is a cell $f_{\tilde\omega}^\d$ of dimension $|\omega|' - |\tilde\omega|' - 1$. This reveals the cellular structure of $\R^d_{\mathsf{coef}}$ and of $\R^{d-1}_{\mathsf{coef}, \a}$ at $P$ as a product of the cellular structure in $St_\nu(P)$ times the open cell $\mathsf R_\omega$. Therefore, the same combinatorics $(\Omega_{\langle d]}, \succ)$ governs both structures; in particular,  each cell $f_{\tilde\omega} \subset St_\nu(P)$ is contained in exactly $|\tilde\omega|' + \#(\tilde\omega^{-1}(2))$ cells of the next dimension. Similarly,  each cell $f_{\tilde\omega}^\d \subset L_\nu(P)$, $\tilde\omega \neq \omega$,  is contained in exactly $|\tilde\omega|' + \#(\tilde\omega^{-1}(2))$ cells of the next dimension.
\end{proof}

%\begin{remark} 
\noindent{\bf Remark 4.1.} Note that the combinatorics of these cellular structures is different from the combinatorics of the standard simplex $\D^d$: in a simplex, the multiplicity of each subsimplex is equal to its codimension (to $|\tilde\omega|'$ in our notations); thus the ``defect" $\#(\tilde\omega^{-1}(2))$  measures the deviation of the  $\Omega_{\langle d]}$-labeled cellular structures in  $\R^d_{\mathsf{coef}}$ from the standard simplicial one in $\D^d$. 
\hfill\qed
\section{On Spaces of Multi-tangent Trajectories}
%%%%%
%%%%%%

As before, let $X$ be a compact smooth $(n+1)$-manifold with boundary. For  a  boundary generic field $v$, each $v$-trajectory $\g$ intersects the boundary $\d_1X$ at a finite number of points $a$ of multiplicities $m(a) \leq n+1$. Recall that formulas (\ref{eq1.2})-(\ref{eq1.4}) attach the multiplicity $m(\g)$, the reduced multiplicity $m'(\g)$, and the virtual multiplicity $\mu(\g)$ to each trajectory $\g$. For traversally generic fields, $m(\g) \leq 2(n+1)$,  $m'(\g) \leq n$ for every $\g$ (see Theorem 3.4 from \cite{K2}).
\smallskip 

The space of $v$-trajectories $\mathcal T(v)$ is given the quotient topology, so that the obvious map $\Gamma: X \to \mathcal T(v)$ is continuous. Let us stress again that if $v$ has singularities, the trajectory space $\mathcal T(v)$ is quite pathological (non-separable). In contrast, for nonsingular generic gradient fields $\mathcal T(v)$ is a decent space, a compact $CW$-complex!   We will prove this theorem in the next paper. 
\smallskip

For traversing fields $v$, all the fibers of $\Gamma$ are closed intervals or singletons. This property leads to 

\begin{theorem}\label{th5.1} For a traversally generic  field $v$ on $X$, the map $\Gamma: X \to \mathcal T(v)$ is a weak homotopy equivalence\footnote{In the next paper, we will show that $\Gamma$ actually is a homotopy equivalence for traversally generic fields.}. % !!!!! (simple ????)   (a quasifibration) 

For any traversing field $v$ and any local coefficient system $\mathcal A$ on $X$, the map  $\Gamma$ is a homology equivalence: $$\Gamma^\ast: H^\ast(\mathcal T(v); \Gamma_\ast(\mathcal A)) \approx H^\ast(X; \mathcal A)$$ for all $\ast \geq 0$.
\end{theorem}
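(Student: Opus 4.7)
The key structural input is that for a traversing field, every fiber $\Gamma^{-1}(\gamma)$ is either a single point or a closed interval, hence contractible. My plan is to handle the two statements separately, since the homology equivalence requires only the traversing hypothesis while the weak homotopy equivalence calls on the finer local models available for traversally generic fields.

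For the homology equivalence with a local coefficient system, I would appeal to a Vietoris--Begle / Leray style argument. Since $X$ is compact and each fiber of $\Gamma$ is compact, $\Gamma$ is a closed proper surjection; the fibers are contractible and therefore acyclic for any coefficient system pulled back along $\Gamma$. Running the Leray spectral sequence of $\Gamma$ applied to $\mathcal A$, the higher direct image sheaves $R^q\Gamma_\ast \mathcal A$ vanish for $q > 0$ by the contractibility of stalks, so the sequence collapses on the $E_2$-page and gives the isomorphism $H^\ast(\mathcal T(v);\, \Gamma_\ast \mathcal A) \cong H^\ast(X;\mathcal A)$. Because $\mathcal T(v)$ need not be locally nice, I would work throughout with \v{C}ech cohomology, which agrees with singular cohomology on the manifold $X$ and is the appropriate theory on the quotient side.

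For the weak homotopy equivalence under the traversally generic hypothesis, I would exploit the local product coordinates of Lemma \ref{lem1.2}. In each $\hat v$-adjusted neighborhood $V$ around a trajectory, $\Gamma$ reads as the projection $(u, x, y) \mapsto (x, y)$ onto a Euclidean slice, a trivial bundle with contractible fibers. Given a compact polyhedron $K$ and a continuous $f: K \to \mathcal T(v)$, I would cover $K$ by finitely many open sets $\{U_\alpha\}$ such that $f(U_\alpha)$ is contained in the $\Gamma$-image of a single local chart $V_\alpha$; over each $U_\alpha$ a lift $\tilde f_\alpha : U_\alpha \to X$ exists trivially. The bulk of the argument is to patch these local lifts on overlaps. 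Since two lifts over $U_\alpha \cap U_\beta$ land in the same fiber of $\Gamma$, hence in a common closed interval, they are joined by a canonical straight-line homotopy within the fiber; standard simplex-by-simplex obstruction theory over a fine triangulation of $K$ then produces a global lift and kills its indeterminacy. This simultaneously gives surjectivity and injectivity of $\Gamma_\ast : \pi_n(X) \to \pi_n(\mathcal T(v))$ for all $n$ (taking $K = S^n$ and $K = S^n \times [0,1]$, respectively), hence the weak equivalence.

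The main obstacle is the coherence of the patching across the Morse stratification $\{\partial_j X\}$. Locally $\Gamma$ is as tame as a trivial $I$-bundle, but the combinatorial type of a trajectory can change as one crosses a stratum, and it must be verified that lifts chosen in neighboring charts can be interpolated continuously across stratum boundaries. The traversally generic condition is exactly what licenses this step, via the semi-local polynomial model (\ref{eq1.7}); the poset $\Omega^\bullet_{'\langle n]}$ developed in Section 2, together with Lemma \ref{lem2.5} on the upper semi-continuity of $|D_{\gamma(x)}|'$, supplies the inductive framework along which the patching proceeds, starting from the top stratum of simple trajectories and descending through strata of increasing combinatorial complexity.
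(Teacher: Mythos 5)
The homology-equivalence half of your argument is essentially the paper's: both invoke the Leray spectral sequence of $\Gamma$, identify the stalks of the higher direct images with the cohomology of the fibers via the inductive-limit formula (the paper cites Godement, Theorem 4.11.1), observe that these vanish in positive degrees because each fiber is a closed segment or a singleton, and conclude by collapse of the sequence. No issues there.

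The weak-homotopy-equivalence half, however, has a genuine gap, and it originates in a misidentification of the map $\Gamma$ in the local model. You write that in a $\hat v$-adjusted chart $V$, ``$\Gamma$ reads as the projection $(u,x,y)\mapsto(x,y)$ onto a Euclidean slice, a trivial bundle with contractible fibers.'' This is not so. The projection $\pi:(u,x,y)\mapsto(x,y)$ restricted to $X\cap V=\{P(u,x)\le 0\}$ has fibers that are \emph{disjoint unions} of closed intervals, one for each connected component of $\{u:\,P(u,x)\le0\}$, and each such component is a \emph{different} trajectory, hence a \emph{different} point of $\mathcal T(v)$. As the paper itself records, $\pi$ factors as $p\circ q$, where $q=\Gamma|_{V_\gamma}$ and $p:U_\gamma\to\R^{|\omega_0|'}$ is a finitely ramified map; locally $\mathcal T(v)$ is the branched model space $\mathsf T_\omega\times\R^{n-|\omega|'}$ of Theorems~\ref{th5.2}--\ref{th5.3}, not a Euclidean slice. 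This vitiates your claim that ``over each $U_\alpha$ a lift $\tilde f_\alpha:U_\alpha\to X$ exists trivially.'' In fact no continuous local section of $\Gamma$ exists near a trajectory lying in a deep stratum: a natural choice such as the centre of each segment (essentially the paper's protosection $\sigma_{k-1}$) is discontinuous exactly where segments merge, and any neighbourhood of such a trajectory in $\mathcal T(v)$ contains points from several incident branches between which no simultaneous continuous selection is possible. For the same reason ``standard simplex-by-simplex obstruction theory'' is not available: $\Gamma$ is not a fibration (nor a priori a quasifibration), so the contractibility of fibers does not by itself produce vanishing obstruction groups, and the set-valued map $\gamma\mapsto\Gamma^{-1}(\gamma)$ fails lower semi-continuity, so selection theorems do not apply directly either.

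The paper closes these gaps by a different route. Instead of constructing global lifts, it invokes the gluing theorem (Corollary 1.4 of \cite{May}), which reduces the weak equivalence to the assertion that $\Gamma:\Gamma^{-1}(U_\alpha)\to U_\alpha$ is a weak equivalence for every element of an intersection-closed open cover of $\mathcal T(v)$. The local statement is proved by a downward induction on the reduced multiplicity $k$: filtering $V_\gamma$ by the sets $V_{\gamma,k}$ of reduced multiplicity at least $k$, one shows that the induced map of quotients $(V_{\gamma,k-1}\cap X)/(V_{\gamma,k}\cap X)\to U_{\gamma,k-1}/U_{\gamma,k}$ admits a continuous section whose image is a deformation retract---the discontinuity locus of the protosection having been collapsed into a point---and then compares the exact homotopy sequences of the relevant triples. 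Your appeal to the poset $\Omega^\bullet_{'\langle n]}$ and Lemma~\ref{lem2.5} points toward a compatible inductive scheme, but as written the proposal neither supplies the quotient-space device that restores continuity of the section, nor a substitute for the gluing principle that would let local conclusions be assembled globally.
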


\begin{proof}  Suppose there exists an open cover $\mathcal U = \{U_\a\}_\a$ of  $\mathcal T(v)$, such that all the maps $\Gamma: \Gamma^{-1}(U_\a) \to U_\a$ are  weak homotopy equivalences.  Then, by Corollary 1.4 from  \cite{May},  $\Gamma: X \to \mathcal T(v)$ is a week homotopy equivalence, provided that the cover $\mathcal U$ is closed under finite intersections of its elements. 

So we need to construct the appropriate cover of $\mathcal T(v)$ and to prove that $\Gamma$ is a weak homotopy equivalence just locally.
\smallskip

%The following observation will guide us. Consider a triple of topological spaces $X \supset Y \supset Z$, such that $Z$ is a deformational retract of $Y$. Then the natural map $Q: X/Z \to X/Y$ is a homotopy equivalence. 

%Using this observation, we will show that, for any point-trajectory $\Gamma(\g) \in \mathcal T(v)$, there exists an open neighborhood $U_\g \subset  \mathcal T(v)$ such that the trajectory $\g \subset X$ is a deformation retract of the space $\Gamma^{-1}(U_\g) \subset X$. 

By Lemma \ref{lem1.2}, for each $v$-trajectory $\g$, there exists a $\hat v$-adjusted neighborhood $\hat V_\g \subset \hat X$ of $\g$ such that, in special coordinates $(u, x, y)$,  $X$ is described by the polynomial inequality $\{P(u, x) \leq 0\}$, and the cylindrical neighborhood $\hat V_\g$ of $\g \subset X$ by the additional inequalities $\|x\| \leq \e$, $\|y\| \leq \e'$. 

Being the union of all $\hat v$-trajectories close to $\g$, this neighborhood $\hat V_\g$ determines a neighborhood $U_\g$ of the point $\g \in \mathcal T(v)$. Note that $\hat V_\g$ may have $\hat v$-trajectories that do not intersect $X$. Let us denote by $V_\g$ the subset of $\hat V_\g$ built out of $\hat v$-trajectories $\hat\g$ with the property $\hat\g \cap X \neq \emptyset$. By definition, the restriction $\Gamma :  V_\g \to U_\g$ is surjective.

Let us denote by $\omega_x \in \Omega$ the combinatorial pattern of the divisor $D_\R(P(u, x))$, where $x \in  \R^{|\omega_0|'}$ and $\omega_0$ is the combinatorial pattern of the divisor $D_\R(P(u, 0))$. If $x$ is such that $P(u, x) > 0$ for all $u$, then $\omega_x: \N_+ \to \Z_+$ is defined to be the zero map. The reduced norm $|\omega_x|'$ of such trivial $\omega_x$ is defined to be $-1$.

For each $k \in [0,\, |\omega_0|']$, consider the real subvariety $\mathcal X_{k} \subset \R^{|\omega_0|'}$ defined by the constraint $|\omega_x|' \geq k$. In other words, $x \in \mathcal X_{k}$ if and only if the reduced multiplicity $m'\big(D_\R(P(u, x))\big) \geq k$. In view of Theorem \ref{th4.1} and Theorem \ref{th4.2}, $\textup{codim}(\mathcal X_{k}, \R^{|\omega_0|'}) = k$. 

Let $\pi: V_\g \to \R^{|\omega_0|'} \times \R^{n - |\omega_0|'}$ denote the projection $(u, x, y) \to (x, y)$.

Put $$V_{\g, k} := \pi^{-1}(\mathcal X_{k} \times B_{\e'}(0) )$$, where $B_{\e'}(0)$ is the $\e'$-ball in $\R^{n- |\omega_0|'}$ with the center at $0$.  Thus $V_{\g, 0} := V_\g$, and $V_{\g, |\omega_0|'} = \g \times B_{\e'}(0)$. 

The map $\pi$ can be viewed as a composition $p\circ q$ of two maps: the quotient surjective map $q: V_\g \to U_\g \subset \mathcal T(v)$, whose fibers are closed segments, and a finitely ramified map $p: U_\g \to \R^{|\omega_0|'}$. The map $q$ is the restriction of the map $\Gamma: X \to \mathcal T(v)$ to the neighborhood $V_\g$.

Let $U_{\g, k} := q(V_{\g, k})$. We will argue by induction ``$k \Rightarrow k-1$". We claim that if $q: V_{\g, k}\cap X \to U_{\g, k}$ is a weak homotopy equivalence, then so is the map $q: V_{\g, k-1} \cap X \to U_{\g, k-1}$, provided $k > 0$.  

First we will show that the map $$\tilde q_{k-1}: (V_{\g, k-1}\cap X)/ (V_{\g, k} \cap X) \to U_{\g, k-1}/ U_{\g, k}$$ admits a continuous  section $\s_{k-1}$ such that $\s_{k-1}(U_{\g, k-1}/ U_{\g, k})$ is a deformation retract of $(V_{\g, k-1}\cap X)/ (V_{\g, k} \cap X)$. 

%With this goal in mind, consider a smooth non-negative function $\phi_k: \R^{|\omega_0|'} \to \R_+$  such that $\phi_k^{-1}(0) =  \mathcal X_{k}$. 

For each $x \in \mathcal X_{k -1}$, the set $\{P(u, x) \leq 0\}$ is a disjointed union of closed intervals $\{I_i(x)\}_i$. Let $u_i^\star(x)$ be the center of the interval  $I_i(x)$. With the help of $q$, the pair $(x, u_i^\star(x))$ determines the point $q(x, u_i^\star(x))$ in $\mathcal T(v)$.  Then we define the ``protosection"  $\s_{k-1}$ by the formula $$\s_{k-1}(q(x, u_i^\star(x)) := (x, u_i^\star(x)).$$
This formula is discontinuous for points  $q(x, u_i^\star(x)) \in U_{\g, k}$, where some intervals $$\{I_i(x)\}_{x \in \mathcal X_{k-1}\setminus \mathcal X_{k},\; i}$$ merge; however, it produces a continuous section $$\s_{k-1} : U_{\g, k-1}/ U_{\g, k} \to  (V_{\g, k-1}\cap X)/ (V_{\g, k} \cap X)$$ of the quotients. Now $ (V_{\g, k-1}\cap X)/ (V_{\g, k} \cap X)$ retracts on $\s_{k-1}(U_{\g, k-1}/ U_{\g, k})$ by collapsing each interval $I_i(x)$ on its center $u_i^\star(x)$. 

The basis of induction is $k = |\omega_0|'$. In this case, with the help of $q$,  $V_{\g, |\omega_0|'} := \g \times B_{\e'}(0)$ is homotopy equivalent to $U_{\g, |\omega_0|'} := B_{\e'}(0)$. 

By the inductive assumption, $q_k: V_{\g, k} \cap X \to U_{\g, k}$ is a weak homotopy equivalence. We have shown that $$\tilde q_{k-1}:   (V_{\g, k-1}\cap X)/ (V_{\g, k} \cap X) \to U_{\g, k-1}/ U_{\g, k}$$ is a homotopy equivalence. Therefore, comparing the exact homotopy sequences of the two triples, we conclude that $q_{k-1}: V_{\g, k-1}\cap X \to U_{\g, k-1}$ is a weak homotopy equivalence as well. In particular, it follows that   $q_0:= q: V_{\g, 0} \cap X \to U_{\g, 0}$ is a weak homotopy equivalence. 

%At the same time, $q: \g \to pt:= q(\g)$ is a homotopy equivalence. That fact is the basis of the induction. Therefore $\g \subset V_\g$ is a weak homotopy equivalence. 

By compactness of $X$, we can pick a finite $v$-adjusted cover $\mathcal V := \{V_\g\}$ of $X \subset \hat X$ and the corresponding cover $\mathcal U := \{U_\g:= \Gamma(V_\g)\}$ of $\mathcal T(v)$, so that each map $\Gamma: V_\g \to U_\g$ is a weak homotopy equivalence. Add to the list $\mathcal V$  all the multiple intersections $V_{\g_1} \cap V_{\g_2} \cap \dots \cap V_{\g_r}$ of elements from $\mathcal V$, thus forming a larger lists $\hat{\mathcal V}$ and a new corresponding list $\hat{\mathcal U}$ comprising all the intersections $U_{\g_1} \cap U_{\g_2} \cap \dots \cap U_{\g_r}$ . 

For each $k$, the locally-defined sets $\{V_{\g_l, k}\}_l$ have an intrinsic description in terms of the combinatorial patterns of tangency. So they automatically agree on multiple intersections: $X \cap V_{\g_l, k} \cap V_{\g_{m}} \subset  X \cap V_{\g_m, k}$ for all $l, m$. Now the same inductive argument in $k$ works for each map $$\Gamma: X \cap V_{\g_1} \cap V_{\g_2} \cap \dots \cap V_{\g_r} \to U_{\g_1} \cap U_{\g_2} \cap \dots \cap U_{\g_r}$$, so that this map is a weak homotopy equivalence as well.

As a result, by Corollary 1.4  \cite{May},  $\Gamma: X \to \mathcal T(v)$ is a week homotopy equivalence.

 %Consider the strata $\{X(v, \Omega^\bullet_{'[k, n]})\}_{0 \leq k \leq n}$. For each $k$, the stratum $X(v, \Omega^\bullet_{'[k, n]})$ is the union of trajectories $\hat \g$ with the property  $|\omega_{\hat\g}|' \geq k$. 

%{\bf WORK A}

%If $D_\C(x, P)$ denotes the complex divisor of the $u$-polynomial $P(u, x)$, then for a sufficiently small $\e > 0$, the radial retraction of the neighborhood $U_\e : = U_\e(\sup(D_\C(0,P)))$ on $\sup(D_\C(0,P))$ defines a  retraction of the set $\{P(u, x) \leq 0\},\; x \in V(U_\e)\}$ on the set $\g = \{P(u, 0) \leq 0\}$. Here $V: \C^d_{\mathsf{root}} := \mathsf{Sym}^d(\C) \to \C^d_{\mathsf{coef}}$ is the Vi\`{e}te homeomorphism. Since $\g \subset X$ is homeomorphic to a closed segment/singleton, $\mathcal T(v)$ is a locally contractible space. As a result, $\Gamma:  \Gamma^{-1}(U_\e) \to U_\e$ is a homotopy equivalence.  {\bf WRONG: markers !!!}

%%{\bf END WORK A}

\bigskip

Now consider a traversing field $v$ on $X$. Let  $\mathcal A$ be any local coefficient system (a sheaf) on $X$ with an abelian group $\mathsf A$ for the stock.  We denote by  $\Gamma_\ast(\mathcal A)$ its push-forward residing on the trajectory space $\mathcal T(v)$. 

Let $U$ be an open neighborhood of a typical trajectory $\g$ in $X$.  Since $X$ is compact and a typical $\Gamma$-fiber---a trajectory $\g$---is closed, the canonical homomorphism $$\lim\; \textup{ind}_{\{U \supset \g\}}\; H^\ast(U; \mathcal A|_U) \to H^\ast(\g; \mathcal A|_\g)$$ is an isomorphism (see Theorem 4.11.1 in \cite{God}).  Since all $\g$'s are either segments or singletons, $H^\ast(\g; \mathcal A|_\g) = 0$ for all $\ast \neq 0$ and $H^0(\g; \mathcal A|_\g) = A$. Thus, the Leray spectral sequence $$\big\{E_2^{pq} = H^p\big(\mathcal T(v);\, \mathcal H^q(\g; \mathcal A)\big)\big\}_{p, q}$$  of the map $\Gamma: X \to \mathcal T(v)$ collapses (see Theorem 4.17.1 in \cite{God}). As a result, we get that the map $\Gamma$ establishes an isomorphism $\Gamma^\ast: H^\ast(\mathcal T(v); \Gamma_\ast(\mathcal A)) \approx H^\ast(X; \mathcal A)$. 

In particular, for a trivial local system $\mathsf A$, we get $H^\ast(X; \mathsf A) \approx H^\ast(\mathcal T(v); \mathsf A)$. 
%%% KEEP IT !!!
%{\bf WORK}
%Next we prove that $\Gamma$ induces an isomorphism $\pi_1(X) \approx \pi_1(\mathcal T(v))$ of the fundamental groups. In fact, any loop $\rho \subset \mathcal T(v)$, admits a "lift" $\tilde\rho$ (although not in the sense of the Serre Fibrations!) to $\Pi := \Gamma^{-1}(\rho)$. We can assume that $\rho$ is in general position with respect to the strata $\{\mathcal T(v, \omega)\}$. WORK  So $\rho$ has finitely many intersections only with the folds (that correspond to the $\mathcal T(v, \omega)$'s with $|\omega|' = 1$)  In order to visualize $\Pi$, consider two piecewise constant functions $g: S^1 \to \R$ and $g: S^1 \to \R$, each with a finite number of discontinuity points and such that $g \leq h$ pointwise. Then $\Pi$ is homeomorphic to the  closure of  the set  $\{(s, t)| s \in S^1, t \in \R, g(s) \leq t \leq  h(s)\}$. %a union of finitely many rectangles.We take the upper portion of the boundary of $\Pi$ for the role of $\tilde\rho$. ????
%Since $X$, $\mathcal T(v)$ are finite cell complexes,  by the Hurewicz Theorem, the cellular map $\Gamma$ is a homotopy equivalence. [REF]. 
%{\bf END WORK}
\end{proof}

%\begin{remark} 
\noindent{\bf Remark 5.1.}
If a traversing field $v$ is such that $\mathcal T(v)$ has a homotopy type of a $CW$-complex, then by Whitehead Theorem \cite{Wh}, $\Gamma: X \to \mathcal T(v)$ is a homotopy equivalence.  In the next paper, we will prove that, for a traversally generic field $v$,  the trajectory space $\mathcal T(v)$ can be given the structure of a compact $CW$-complex. \hfill\qed
%\end{remark}

\bigskip

For any sub-poset $\Theta \subset \Omega^\bullet_{'\langle n]}$ and a traversally generic field $v$ on $X$, let us  consider the subsets $X(v, \Theta) \subset X$ and $\mathcal T(v, \Theta) \subset \mathcal T(v)$ comprised of points $x \in X$ or of trajectories $\g_x \in \mathcal T(v)$ whose divisors $D_{\g_x}$ have the combinatorial models prescribed by the elements of $\Theta$.

In particular, we will see that the webs of subspaces $$\{X(v, \omega_\succeq)\}_{\omega \in \Omega^\bullet_{'\langle n]}} \quad\text{and} \quad \{\mathcal T(v, \omega_\succeq)\}_{\omega \in \Omega^\bullet _{'\langle n]}}$$ form a remarkable geometric structure.  It will preoccupy us for the rest of this series of articles.

A cruder stratification (filtration) of $X$ and $\mathcal T(v)$ is provided by the spaces  
$$\{X(v, \Omega^\bullet_{'[k, n]})\}_{0 \leq k \leq n} \quad\text{and} \quad \{\mathcal T(v, \Omega^\bullet_{'[k, n]})\}_{0 \leq k \leq n}$$, respectively.
\smallskip

%For any $\omega$, we denote by $supp^{odd}(\omega)$ and $supp^{ev}(\omega)$ the set of indices on which $\omega$ takes odd and even values, respectively.

Lemma 3.4 (see also Lemma \ref{eq1.2}) and Theorem 3.5 from \cite{K2} have an useful implication: 

\begin{corollary}\label{cor5.1} Let $X$ be a  smooth $(n+1)$-manifold with boundary. For any traversally generic  field $v$, the obvious map $\Gamma : \d_1X \to \mathcal T(v)$ is $(n + 2)$-to-$1$ at most. At the same time, $\Gamma: \d_2X \to \mathcal T(v)$ is $n$-to-$1$ at most. For each $\omega$, the restriction of $\Gamma$ to the subspace $\Gamma^{-1}(\mathcal T(v, \omega))$ is $|\sup(\omega)|$-to-$1$. \smallskip

When restricted to the $\Gamma$-preimage of the proper stratum $\mathcal T(v, \omega)$, $\Gamma$ is a covering  map with a trivial monodromy and a fiber of cardinality $|\sup(\omega)|$.
\end{corollary}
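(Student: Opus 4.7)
The entire statement is controlled by the combinatorial model $\omega = \omega_\g \in \Omega^\bullet_{'\langle n]}$ attached to a trajectory $\g$, together with the semi-local normal form from Lemma \ref{lem1.2}. My plan is to first extract the three cardinality bounds from the definition of $\Omega^\bullet_{'\langle n]}$ and the key inequality $m'(\g) = |\omega|' \leq n$ (Remark~1.1), and then upgrade the last bound to a covering statement via the normal form.

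\emph{Bounding $|\g \cap \d_1X|$ and $|\g \cap \d_2X|$.} Since $\Gamma^{-1}(\g) \cap \d_1X = \g \cap \d_1X$ has exactly $|\sup(\omega)|$ points, the first two assertions reduce to counting support. Recall that every $\omega \in \Omega^\bullet$ either has $\sup(\omega) = \{1\}$, or satisfies $\omega(1), \omega(q)$ odd (so $\geq 1$) and $\omega(i) \geq 2$ for $1 < i < q$, where $q = |\sup(\omega)|$. In the nontrivial case each interior index contributes at least $1$ to $|\omega|' = \sum_i (\omega(i) - 1)$, hence
\[
|\sup(\omega)| = q \;\leq\; |\omega|' + 2 \;\leq\; n + 2,
\]
establishing the first bound. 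For the second, each point $a_i \in \g \cap \d_2X$ has multiplicity $j_i = \omega(i) \geq 2$, and every such $i$ contributes $\omega(i) - 1 \geq 1$ to $|\omega|'$; therefore the count $|\g \cap \d_2X|$ is at most $|\omega|' \leq n$. The third bound is the tautology $|\Gamma^{-1}(\g) \cap \d_1X| = |\sup(\omega)|$.

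\emph{Covering property over the proper stratum.} Fix $\omega$ and a trajectory $\g \in \mathcal T(v, \omega)$. Apply Lemma \ref{lem1.2} to obtain a $\hat v$-adjusted neighborhood $V \supset \g$ with special coordinates $(u; \{x_{il}\}; \{y_k\})$ in which the boundary is cut out by the product polynomial (\ref{eq1.7}). By shrinking $V$ we may assume $V \cap \d_1X \subset \coprod_i V_i$ with disjoint slabs $V_i$ centered at the heights $\a_i = u(a_i)$. Let $U := \Gamma(V) \cap \mathcal T(v, \omega)$ denote the neighborhood of $\g$ in the proper stratum. A trajectory $\g' \subset V$ belongs to $\Gamma^{-1}(U)$ precisely when, inside \emph{every} slab $V_i$, the corresponding factor $(u - \a_i)^{j_i} + \sum_l x_{il}(u - \a_i)^l$ has a real root of multiplicity exactly $j_i$; by Theorem \ref{th4.1} applied factor-by-factor, on $\mathcal T(v, \omega)$ each such factor contributes exactly one real root (the $i$-th tangency point of $\g'$). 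Consequently $\Gamma^{-1}(U) \cap \d_1X$ decomposes into $|\sup(\omega)|$ disjoint local sections $\Sigma_i$ of $\Gamma$, one per slab $V_i$, each a homeomorphism onto $U$. This exhibits $\Gamma : \Gamma^{-1}(\mathcal T(v, \omega)) \cap \d_1 X \to \mathcal T(v, \omega)$ as a local, hence global, covering of degree $|\sup(\omega)|$.

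\emph{Triviality of the monodromy.} The sheets $\{\Sigma_i\}_{i=1}^{|\sup(\omega)|}$ are canonically indexed by the $v$-induced order along each trajectory: the $i$-th sheet collects the $i$-th intersection point of $\g' \in \mathcal T(v, \omega)$ with $\d_1X$, reading in the direction of the flow. This labeling is continuous in $\g'$ because the $\a_i(\g')$ are continuous functions (real roots of a varying polynomial with persistent combinatorial type), so the local sections $\Sigma_i$ patch into globally defined sections over $\mathcal T(v, \omega)$. Thus the covering has a trivial monodromy. The only nontrivial step is identifying the preimage within a single slab as a single section; this rests on the product form (\ref{eq1.7}) together with the fact that, when $\omega$ is preserved, no real root of a factor can be created or destroyed, and none can migrate between slabs. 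This is the main technical point of the argument.
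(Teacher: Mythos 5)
Your argument for the first three counting claims is essentially the paper's: both rely on the structure of $\Omega^\bullet$ (interior indices have multiplicity $\geq 2$, endpoints odd) and the inequality $|\omega|' = m'(\g)\leq n$ for traversally generic fields. The paper phrases it slightly differently (``each trajectory has exactly two points of odd multiplicity, the rest are tangent points of even multiplicity, and those do not exceed $n$ in number'') but the content is identical. Your accounting is perhaps a bit cleaner by going directly through $|\sup(\omega)| \leq |\omega|'+2$.

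On the covering-with-trivial-monodromy claim, your route is genuinely different from the paper's. The paper restricts $\Gamma$ to an arbitrary loop $\b \subset \mathcal T(v,\omega)$, observes that $E_\b := \Gamma^{-1}(\b)$ is a cylinder rather than a M\"obius band because the $v$-flow co-orients the fibers of $\Gamma$, and then concludes at once that any finite subcovering of $\b$ sitting inside the cylinder must be trivial; this is short but global and slightly indirect. You instead work from Lemma \ref{lem1.2}: in the normal form (\ref{eq1.7}), preservation of the combinatorial type $\omega$ forces each factor to keep a single real root of full multiplicity $j_i$, so $\Gamma^{-1}(U) \cap \d_1X$ over a slab-neighborhood $U$ of $\g$ splits into $|\sup(\omega)|$ disjoint sections $\Sigma_i$ indexed by the $v$-order of intersection; continuity of the ordered roots then patches these into global sections, which is exactly triviality of the monodromy. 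Both arguments hinge on the same essential fact (the flow linearly orders $\g\cap\d_1X$), but yours makes the sections explicit through the local model, which is somewhat longer but arguably more constructive and directly exhibits the trivialization, whereas the paper's version is more economical and purely topological. Either proof is correct.
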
 

\begin{proof} For a traversally generic field, by Theorem  3.5 from \cite{K2}, $m(\g) \leq 2(n+1)$. Each trajectory has exactly two points of odd multiplicity, the rest of the points are tangent points of even multiplicity. Their  number does not exceed $n$. Thus $\Gamma$ is $(n + 2)$-to-$1$ at most, and  $\Gamma|_{\d_2X}$ is  $n$-to-$1$ at most. 

The  statement dealing with the cardinalities of the fibers of $$\Gamma: \Gamma^{-1}(\mathcal T(v, \omega)) \to \mathcal T(v, \omega)$$  follows instantly from the definitions of the relevant spaces. %equally  clear: for each $\tilde \omega \in \omega_\succeq \subset \Omega^\bullet_{[0, n]}$, $\#(sup(\tilde \omega)) \leq \#(sup(\omega))$. 

Let $\b$ be a loop in $\mathcal T(v, \omega)$, and let $E_\b := \Gamma^{-1}(\b) \subset X$. Note that $\Gamma: E_\b \to \b$, thanks to the orientation by $v$, is a cylinder (and not a M\"{o}bius band). Consider the intersection $E_\b \cap \d_1X$. Since $\b$ is contained in the pure stratum $\mathcal T(v, \omega)$,  $\Gamma: E_\b \cap \d_1X \to \b$ is a covering map with a finite fiber.  Because its space $E_\b \cap \d_1X$ is contained in the cylinder $E_\b$, we conclude that $\Gamma: E_\b \cap \d_1X \to \b$ must be  a trivial covering. 
\end{proof}

Assuming that $v$ is traversally generic, our immediate  goal is to describe one particular \emph{localized} cellular structure of the trajectory space $\mathcal T(v)$.  As we mentioned before, it is governed by the combinatorics of the divisors in $\R$ of real degree $\leq 2(n+1)$ polynomials.  

First, we would like to understand better the $\Omega^\bullet_{'\langle n]}$-stratified structure of  $\mathcal T(v)$, localized to the vicinity of given $v$-trajectory $\g$. 

As usual, we operate  within an  extension germ $(\hat X, \hat v)$ of $(X, v)$. Let $\{a_i\}_i : = \g \cap \d_1X$ be the the $v$-ordered finite set of points, where  $a_i \in \d_{j_i}X^\circ$. By  by Theorem  3.5 from \cite{K2} (see also Lemma \ref{lem1.2}), this  tangency pattern $\omega = (j_1, j_2, \dots )$ is described by an element $\omega \in \Omega^\bullet_{'\langle n]} \cap \Omega_{\langle 2n +2]}$.

%With each $a_i \in \g_\star \cap \d_1X$ we associate the basic function $\omega_i: \{1\} \to \{j_i\}$, where $j_i = m(a_i)$. 

By Lemma \ref{eq1.2} and formula  (\ref{eq1.7}), in special  coordinates $(u, x, y)$ on some $\hat v$-adjusted tube surrounding $\g$, the manifold $X$ is given by the inequality 
\begin{eqnarray}\label{eq5.1}
P(u, x):= \prod_i \;\big[(u - \a_i)^{j_i} + \sum_{l=0}^{j_i - 2} x_{i, l} (u - \a_i)^l\big]  \leq 0
\end{eqnarray}
, where $\a_i = u(a_i)$ and $x = \{x_{i, l}\}$.

Next, in our local analysis, we can pick a canonical model of $X$ in the vicinity of $\g$ by assuming that each $\a_i = i$. 

If $|\omega| \equiv 0 (2)$, for each fixed value of the coordinates $(x, y)$, the solution set of (\ref{eq5.1}) is a disjoint  union of several closed intervals and singletons residing in the $u$-line $\hat\g_x$ (see Fig. 1 in \cite{K2}), the union depending on $x$ alone. Each of these intervals and singletons represent a $v$-trajectory suspended over $(x, y)$ (for some $x$, $\hat\g_x$ can be empty!). So to get the space of trajectories $\mathcal T(v)$ in the vicinity of  $\g$, we need to collapse each interval to a point-marker  that resides in it. Let us formalize the collapsing procedure. \smallskip

Consider the solution set $E_\omega$ of (\ref{eq5.1}). We say that two points $(u, x, y), (u', x', y') \in E_\omega$  are equivalent (``$\sim$"), if $x = x', y = y'$, and the interval $([u, u'], x, y) \subset E_\omega$. Now we define the space $\mathsf T_\omega$ as the quotient space $E_\omega/\sim$. 

The space  $\mathsf T_\omega$ comes equipped with the map  $p: \mathsf T_\omega \to \R^{|\omega|'} \times \R^{n - |\omega|'}$ induced by the obvious projection $(u, x, y) \to (x, y)$. Since $X$ is compact, for each $x$, the polynomial  $P(u, x)$ in (\ref{eq5.1}) has finitely many intervals where it is negative, $p$ is a ramified map with finite fibers.

For any fixed $x$,  the $u$-polynomial in $P(u, x)$ in (\ref{eq5.1}) can be viewed also as an element of  the space $\mathcal P^{|\omega|} := \R^{|\omega|}_{\mathsf{coef}}$, and as such belongs to a unique pure stratum  $\mathsf R_{\omega'} := \mathsf R_{\omega'}(x) \subset \mathcal P^{|\omega|}$, where $\omega' \succeq \omega$ in $\Omega_{\langle |\omega|]}$. Therefore, with the help of  (\ref{eq5.1}), each $x \in \R^{|\omega|'}$ has a well-defined combinatorial type $\omega(x) = \omega' \in \Omega_{\langle |\omega|]}$ associated to it. As a result,  $\R^{|\omega|'}$ is an $\Omega_{\langle |\omega|]}$-stratified space, and so is $E_\omega \subset \R \times \R^{|\omega|'} \times \R^{n - |\omega|'}$.   

In fact, the space $E_\omega$ admit a ``more intrinsic" stratification which is labeled by the elements of the poset $\Omega^\bullet_{'\langle d]}$, where $d = |\omega|'$.  In a sense, this stratification is cruder than the  $\Omega_{\langle |\omega|]}$-stratification of $\R^{|\omega|'}$. Here is the description of this $\Omega^\bullet_{'\langle  |\omega|']}$-stratification. 

For each point $(u_\star, x, y) \in E_\omega$, there is a unique closed interval $I_{u_\star, x} := [a, b]$ such that $u_\star \in [a, b]$, $P(u, x) \leq 0$ for all $u \in [a, b]$, and $I_{u_\star, x}$ is the maximal closed interval possessing  these two properties. Consider the real zero divisor $D_{(u_\star, x)}$ of the $u$-polynomial $P(u, x)$ being restricted to the interval $I_{u_\star, x}$.  Its combinatorial type $\omega(u_\star, x) \in \Omega^\bullet$ and is independent on the choice of $u_\star$ within the interval $I_{u_\star, x}$. Thus, $\omega(u_\star, x)$ depends only on the equivalence class of $(u_\star, x, y) \in E_\omega$, a point  in $\mathsf T_\omega$. 

On the other hand, if  $\omega(x) = \omega(x')$ for some $x, x' \in \R^{|\omega|'}$,  then there exist $$(u_\star, x, y), (u'_\star, x', y) \in E_\omega$$ such that $\omega(u_\star, x) = \omega(u'_\star, x')$. Moreover, the orders which the intervals $I_{u_\star, x}$ and $I_{u'_\star, x'}$ occupy inside the sets $P^{-1}((-\infty , 0), x)$ and $P^{-1}((-\infty , 0), x')$, respectively, are the same. Stated differently, the combinatorial type $\omega(x) \in \Omega_{\langle |\omega|]}$ determines the ordered sequence $\Xi(\omega(x))$ of types $\{\omega(u_\star, x)\}_{u_\star}$ for points in the fiber $p^{-1}(x)$ (cf. the discussion preceding Fig. 1).  

Since the construction of the space $\mathsf T_\omega$ and its $\Omega^\bullet_{'\langle |\omega|']}$-stratification depends only on the combinatorial pattern $\omega \in \Omega^\bullet_{'\langle n]}$, we get:

\begin{theorem}\label{th5.2} For any traversally generic field $v$ on a $(n + 1)$-manifold $X$ and any $v$-trajectory $\g$ with the tangency multiplicity  pattern $\omega \in  \Omega^\bullet_{'\langle n]}$, the $\Omega^\bullet_{'\langle |\omega|']}$-stratified topological type of the germ of  the trajectory space $\mathcal T(v)$ at the point $\g$ is determined by the combinatorial pattern $\omega$ alone. \qquad \qquad \qed
\end{theorem}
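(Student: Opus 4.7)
The plan is to reduce the entire germ of $\mathcal T(v)$ at $\g$ to the polynomial normal form supplied by Lemma \ref{lem1.2}, and then invoke the intrinsic construction of $\mathsf T_\omega$ already carried out in the preamble to the theorem. First, I would apply Lemma \ref{lem1.2} to pick a $\hat v$-adjusted neighborhood $\hat V_\g$ with special coordinates $(u, x, y)$ in which $X$ is locally cut out by the inequality $\{P(u, x) \leq 0\}$ of formula (\ref{eq5.1}). After the harmless normalization $\a_i := i$ (achieved by a $u$-translation that respects the stratified structure), both the polynomial family $P(u, x)$ and the solution set $E_\omega$ depend literally on $\omega$ alone. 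Since by Lemma \ref{lem1.2} the $\hat v$-trajectories in $\hat V_\g$ are exactly the lines $\{x = \mathrm{const},\; y = \mathrm{const}\}$, the germ of $\mathcal T(v)$ at $\g$ is identified with the quotient $\mathsf T_\omega = E_\omega/\!\sim$, where $\sim$ collapses each maximal closed interval of the fiberwise negativity locus of $P(u, x)$ to a point. The quotient $\mathsf T_\omega$ is thus built from data depending only on $\omega$.

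Second, I would verify that the $\Omega^\bullet_{'\langle |\omega|']}$-stratification of $\mathsf T_\omega$ is also intrinsic. For each equivalence class in $E_\omega$, representing a trajectory through $(x, y)$, the associated combinatorial type $\omega(u_\star, x) \in \Omega^\bullet_{'\langle |\omega|']}$ is computed from the restriction of $P(u, x)$ to the unique maximal negativity interval $I_{u_\star, x}$; this type is well-defined on the quotient and depends only on $\omega$ and $x$. Lemma \ref{lem2.5} ensures this stratification is upper semi-continuous and that the incidences of strata match the order $\succ_\bullet$ on $\Omega^\bullet_{'\langle |\omega|']}$. Consequently the stratified space $(\mathsf T_\omega, \{\mathsf T_\omega(\tilde\omega)\}_{\tilde\omega})$ is constructed from $\omega$ alone.

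The principal obstacle I anticipate is \emph{coordinate independence}: two different choices of $\hat v$-adjusted special coordinates around $\g$ could a priori produce different polynomial families $P$ and $P'$, and one must argue that the resulting stratified germs $\mathsf T_\omega$ and $\mathsf T_\omega'$ are stratified-homeomorphic. To overcome this I would exploit the fact that the normal form (\ref{eq5.1}) of Lemma \ref{lem1.2} is canonical up to smooth diffeomorphisms of the $(x, y)$-transversal that preserve the $\hat v$-flow-induced fibration and preserve each stratum $\d_jX(v)^\circ$. Such diffeomorphisms lift to stratification-preserving homeomorphisms of $E_\omega$ that are constant on the equivalence classes of $\sim$, and hence descend to stratified homeomorphisms between the two candidate germs $\mathsf T_\omega$ and $\mathsf T_\omega'$. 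This descent, together with the intrinsic construction above, yields that the $\Omega^\bullet_{'\langle |\omega|']}$-stratified topological type of the germ of $\mathcal T(v)$ at $\g$ is a function of $\omega$ alone, completing the proof.
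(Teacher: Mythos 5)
Your proposal follows essentially the same route as the paper: reduce the germ of $\mathcal T(v)$ at $\g$ to the normal form (\ref{eq5.1}) via Lemma \ref{lem1.2}, form the model $\mathsf T_\omega = E_\omega/\!\sim$ by collapsing the maximal negativity intervals of $P(u,x)$, and observe that both this quotient and its $\Omega^\bullet_{'\langle |\omega|']}$-stratification (read off from the restriction of $P(u,x)$ to each interval $I_{u_\star,x}$) are built from $\omega$ alone. Your explicit attention to coordinate independence of the normal form is a welcome clarification of a point the paper leaves implicit, but it does not change the argument's structure.
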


%\begin{example} 
\noindent{\bf Example 5.1.} For the traversally generic  fields $v$ on $4$-folds $X$,  there are $11$ distinct local topological  types for $\mathcal T(v)$. They are labeled by the elements of the poset from  Fig. 2. \hfill\qed
%\end{example}
\smallskip
\smallskip

Next, we will employ similar considerations to describe the germ at $\g$ of a cellular structure in $\mathsf T_\omega$, a structure subordinate to the filtration of $\mathsf T_\omega$ by spaces which are labeled by the elements  of the poset $\Omega^\bullet_{'\langle |\omega|']}$. Eventually, these investigations will culminate in Theorem \ref{th5.3} below. 
\smallskip

By choosing an appropriately narrow $\hat v$-adjusted neighborhood $U \subset \hat X$ of $\g$, for any $x$ sufficiently close to the origin, the complex zeros of $P(u, x)$ from (\ref{eq5.1}) can be separated into disjointed groups.  These groups correspond to the real zeros $\{\a_i\}_i$ of the polynomial $P(u, 0)$ (which is a product of linear polynomials over $\R$) and reside in their vicinity. Moreover, each  portion of the complex zero divisor of $P(u, x)$, taken within each group, by (\ref{eq5.1}), is $\a_i$-balanced.  

Thus, for any $\hat v$-trajectory $\hat \g \subset U$,  the ``real" zero divisor $D_{\hat\g}$ splits into several %$\a_i$-balanced 
``real" divisors $D_{\hat\g, i}$.  Their combinatorial types are described by some elements  $\hat\omega_i \in \Omega_{\langle \omega(i)]}$ such that   $|\hat\omega_i| \leq \omega(i)$ and $|\hat\omega_i| \equiv \omega(i)\, \mod (2)$.  In other words, $\omega_{\hat\g}$, the combinatorial type of $D_{\hat\g}$, is determined by an element of the product $\prod_i \, \Omega_{\langle  \omega(i)]}$. Evidently, there is  a canonical map $$\kappa: \prod_i \, \Omega_{\langle  \omega(i)]} \to \Omega_{\langle |\omega|]}$$ that places $\{\hat\omega_i\}_i$ in a single array $\omega_{\hat\g} := (\hat\omega_1, \hat\omega_2, \dots )$.

%REVISIT Similarly, any trajectory $\hat\g \subset U$ gives rise to its connected portions $\hat\g_i \subset X$  residing in sufficiently small neighborhoods of the points $a_i \in \g \cap \d_1X$.

For each $i$, we denote by  $x_i$ the ordered subset $\{x_{i,1}, x_{i,2}, \dots \}$ of the $x$-coordinates, amenable to the formula (\ref{eq5.1}). 

The decomposition of $P(u, x) = \prod_i P_i(u - \a_i, x_i)$ of the left-hand side of (\ref{eq5.1})  into a product of monic depressed polynomials $P_i(u - \a_i, x_i)$ in $u -\a_i$ of degrees $j_i :=\omega(i)$ gives rise a continuous map
\begin{eqnarray}\label{eq5.2} 
A_\omega: \R^{|\omega|'} \to \prod_i \R_{\mathsf{coef}, \a_i}^{\omega(i) - 1}
\end{eqnarray} 
from the space of $x$-coordinates $\R^{|\omega|'}$, to the product of polynomial spaces $\{\R_{\mathsf{coef}, \a_i}^{\omega(i) - 1}\}_i$ of equal dimension $|\omega|'$. If $x \neq x'$, then at least for one $i$, the vectors  $x_i$, $x'_i$ are distinct. Thus the $(u - \a_i)$-polynomials $P_i(u - \a_i, x_i)$ and $P_i(u - \a_i, x'_i)$ must have distinct coefficients. As a result, $A_\omega$ is a 1-to-1 map. Evidently, by (\ref{eq5.1}), $A_\omega$ is surjective. So  $A_\omega$ establishes a smooth homeomorphism of the two spaces from (\ref{eq5.2}). 

With the help of Theorems \ref{th4.1} and \ref{th4.2}, we get the cellular structures
$$ 
\{\Phi_{\a_i, \hat\omega_i}: e_{\a_i, \hat\omega_i} \to \mathsf R_{\a_i, \hat\omega_i}\}_{\hat\omega_i \in \Omega_{\langle \omega(i)]}}
$$ 
for each space $\R_{\mathsf{coef}, \a_i}^{\omega(i) - 1}$. By $A_\omega^{-1}$ from (\ref{eq5.2}), this product of cellular structures in the target space of  (\ref{eq5.2}) gives rise to a cellular structure in the space $\R^{|\omega|'}$  of $x$-coordinates. Employing $$\kappa: \prod_i \, \Omega_{\langle  \omega(i)]} \to \Omega_{\langle |\omega|]}$$, that structure 
\begin{eqnarray}\label{eq5.3} 
\Psi_{\hat\omega} := A_\omega^{-1}\circ \big(\prod_i \Phi_{\a_i, \hat\omega_i}\big): \quad \prod_i  e_{\a_i, \hat\omega_i} \longrightarrow \R^{|\omega|'}
\end{eqnarray}
is consistent with the stratification of $\R^{|\omega|'}$, labeled by the elements $\kappa(\{\hat\omega_i\}_i)$ of the poset $\Omega_{\langle |\omega|]}$. 

In what follows, the cellular structure in the product $\R^{|\omega |'} \times \R^{n -|\omega|'}$ is chosen to be this cellular structure in $\R^{|\omega |'}$ given by  (\ref{eq5.3}), being multiplied by a single open cell $e^{n -|\omega|'} \approx \R^{n -|\omega|'}$. 

Note that, so far, the cells in $\R^{|\omega |'}$ are labelled by the elements of the poset $\omega_\preceq \subset  \Omega_{\langle |\omega|]}$, and not by elements of the poset $\omega_{\bullet\preceq} \subset  \Omega^\bullet_{'\langle |\omega|']}$, appropriate for the trajectories in $E_\omega$ (see the discussion preceding Lemma \ref{lem4.3}).

Next, using the ramified map $p: \mathsf T_\omega \to \R^{|\omega|'} \times \R^{n- |\omega|'}$ with finite fibers, we will employ the  cellular structure (\ref{eq5.3}) in the target space $\R^{|\omega|'}$ to produce a cellular  structure in the source space $\mathsf T_\omega$, so that $p$ will become a cellular map. That  task will preoccupy us for a while...  
\smallskip

With this goal in mind, we introduce  \emph{markers}, a new combinatorial contraption (see Fig. 1 and Fig. 4). 

For each $\omega \in \Omega_{\langle d]}$, consider the auxiliary $u$-polynomial $$\wp_\omega(u) = \prod_{\{i\, \in\, \sup(\omega)\}}  (u - i)^{\omega(i)}$$  of degree that  does not exceed $d$ and shares the same parity with it.

%For each $\omega$, consider the following four subsets in $sup(\omega)$. By definition, $i \in sup_{ev}(\omega)$ if $\omega(i) \equiv 0 (2)$, and  $i \in sup_{odd}(\omega)$ if $\omega(i) \equiv 1 (2)$. We write  $i  \in sup_{ev}^+(\omega)$ if $i$ belongs to $sup_{ev}(\omega)$ and is bounded from above and below by indices from the set $sup_{odd}(\omega)$. 

%The set $sup_{ev}^-(\omega)$ is complementary to the set $sup_{ev}^+(\omega)$ in $sup_{ev}(\omega)$. We define $sup_{odd}^-(\omega)$ as the subset of $sup_{odd}(\omega)$ formed by $i$'s which acquire odd ordinals in that set (as counted from lower indices  to higher once). $sup_{odd}^+(\omega)$ denotes the complementary set in $sup_{odd}(\omega)$.

For a given $\omega$, we consider a pair $(\omega, k)$, where the \emph{marker} $k \in \sup(\omega)\subset \N$ is such that:

\begin{itemize}
\item either $\omega(k) \equiv 0 \mod (2)$ and $\wp_\omega(k - 0.5) > 0, \;\wp_\omega(k + 0.5) > 0$, 
\item or $\omega(k) \equiv 1 \mod (2)$ and $\wp_\omega(k - 0.5) > 0,\;  \wp_\omega(k + 0.5) < 0$.
\end{itemize}

We denote by $\Omega^\mu_{\langle d]}$ the set of all marked pairs $(\omega, k)$ as above.

Let us denote by $\Upsilon(\omega) \subset \N$ the set of markers $k$ associated with $\omega$. Each marker $k \in \Upsilon(\omega)$, as an element of linearly ordered set  $\Upsilon(\omega)$,  acquires its ordinal $p$.  We will denote the $p$-th marker in $\Upsilon(\omega)$ by  $k_p$. Let  $\Upsilon_p(\omega) \subset \N$ be the maximal set of consecutive natural numbers $j \geq k_p$ such that $\wp_\omega(u) \leq 0$ for all $u \in [k_p, j]$.
\smallskip

%We divide $\Upsilon(\omega)$ into a minimal number of disjoint sets $\{\Upsilon_p(\omega)\}$ so that $\Upsilon_p(\omega) < \Upsilon_{p + 1}(\omega)$ in $\Z$ and $\wp_\omega(x) \leq 0$ for all real $x \in [min \{\Upsilon_p(\omega)\}, max\{\Upsilon_p(\omega)\}]$. Note that each set $\Upsilon_p(\omega)$ may contain a single marker, $i _p = min \{\Upsilon_p(\omega)\}$, at most.
%\smallskip

It is possible to extend the elementary operations $\mathsf M_j$ and $\mathsf I_j$ (merge and insert),  introduced in (\ref{eq2.1})-(\ref{eq2.3}) for the poset $\Omega_{\langle d]}$,  to the elements of the new set  $\Omega^\mu_{\langle d]}$. 

The new merge operation $\mathsf M_j^\mu: \Omega^\mu_{\langle d]} \to \Omega^\mu_{\langle d]}$ is  shown in Fig. 4.
Let 
\begin{eqnarray}\label{eq5.4}
\mathsf M_j^\mu(\omega, k) := (\mathsf M_j(\omega), \mu_j(k))
\end{eqnarray}
, where $\mu_j(k) \in \Upsilon(\mathsf M_j(\omega))$ is defined as follows.

If both $j$ and  $j + 1$ belong to the same set $\Upsilon_p(\omega)$, and the marker $k \in  \Upsilon_q(\omega), q \neq p$, then $\mu_j(k) = k$ as elements of the set $\Upsilon_q(\mathsf M_j(\omega)) = \Upsilon_q(\omega)$. If  $j$ and  $j + 1$ belong to  $\Upsilon_p(\omega)$, and the marker $k \in  \Upsilon_p(\omega)$, then again $\mu_j(k) = k$ as elements of the set $\Upsilon_p(\mathsf M_j(\omega)) \subset \Upsilon_p(\omega)$. 
 
%Here we interpret $i$ as an element of some set   $\Upsilon_q(\omega) \supseteq \Upsilon_q(\mathsf M_j(\omega))$, not as an ordinal. In other words, if $i \in \Upsilon_q(\omega)$, it keeps its minimal position within the set $\Upsilon_q(\omega) \supseteq \Upsilon_q(\mathsf M_j(\omega))$. 

At the same time, if  $j \in  \Upsilon_p(\omega)$,  $j + 1 \in  \Upsilon_{p + 1}(\omega)$,  and $k \in \Upsilon_{p + 1}(\omega)$, then $\mu_j(k)$ is the unique minimal element in $\Upsilon_p(\mathsf M_j(\omega))$. When $k \in  \Upsilon_q(\omega)$ and $q \neq p+1$, the marker keeps its minimal position within the  subset $\Upsilon_q(\omega)$.

%These rules are depicted  in Fig. 7.4 REF.

\begin{figure}[ht]
\centerline{\includegraphics[height=3in,width=3.7in]{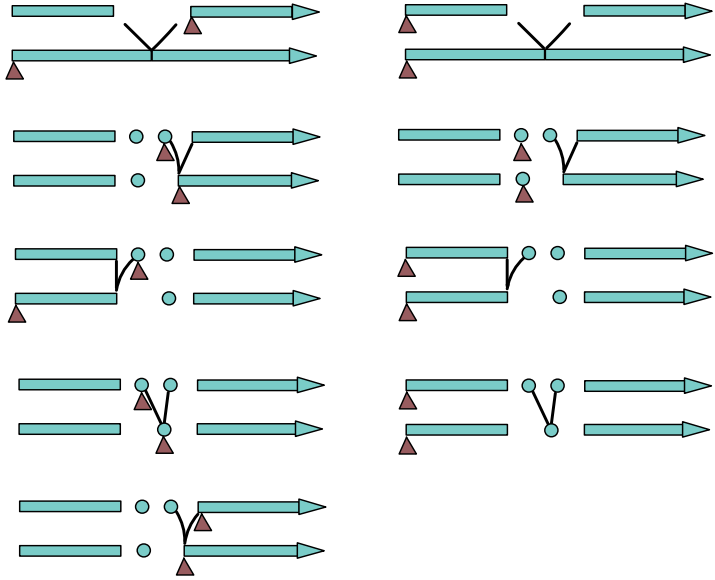}}
\bigskip
\caption{\small{The rules by which the markers evolve under the merge operations (depicted as the $V$-shaped passages from top to bottom bars in each of the seven diagrams).}}
\end{figure}

Similarly, the new insert operation 
\begin{eqnarray}\label{eq5.5}
\mathsf I_j^\mu(\omega, k) := (\mathsf I_j(\omega), \mu_j(k))
\end{eqnarray}
is described as follows. 
Under $\mathsf I_j^\mu$, $\omega$ is subjected to the old insert operation $\mathsf I_j$, while the marker $k \in \Upsilon_p(\omega)$ keeps its minimal position within the set $\Upsilon_p(\omega) \subseteq \Upsilon_q(\mathsf I_j(\omega))$ (where $q$ is uniquely determined by $p$ and $j$), that is, $\mu_j(k) = k$ as the minimal elements of the appropriate sets. 
\smallskip

%REVSIT AND KEEP !!!

%Put $\mathsf I_j^\mu(\omega, i)  = (\mathsf I_j(\omega), i)$ if the index $j$ satisfies $min \{\Upsilon_p(\omega)\} < j < max\{\Upsilon_p(\omega)\}$ for some $p$.  Again in this formula we interpret $i$ as an element of some set $\Upsilon_q(\omega) = \Upsilon_q(\mathsf I_j(\omega))$, not as an ordinal.  

%If $j$ belongs to the complement of the set $\Upsilon(\omega)$, then the marker $i \in \Upsilon_p(\omega)$ keeps its position within the set $\Upsilon_p(\omega)$. 

%If $j = min \{\Upsilon_p(\omega)\} - 1$ for some $p$, let  $\mathsf I_j^\mu(\omega, i)  = (\mathsf I_j(\omega), i')$, where $i \in \Upsilon_q(\omega)$ and $i' \in \Upsilon_{q +1}(\mathsf I_j(\omega))$, provided $min\{ \Upsilon_q(\omega)\} > j$. If $i \in \Upsilon_q(\omega)$ and $min\{ \Upsilon_q(\omega)\} < j$, then $\mathsf I_j^\mu(\omega, i)  = (\mathsf I_j(\omega), i)$. 

%WORK 

Here is a slightly more geometrical look at the markers and their behavior, a look motivated by interactions of vector flows with the boundary $\d_1X$. Let $\hat\g \subset \hat X$ be a typical $\hat v$-trajectory in the vicinity of a given trajectory $\g \subset X$. The intersection  $\hat \g \cap \d_1X = \{b_l\}_{1 \leq l \leq q}$, allows us to \emph{shade}  some of the intervals $(b_l, b_{l+1})$ in which $\d_1X$ divides $\hat\g$: the interval is shaded if it belongs to $X$. Using any auxiliary function 
$z: \hat X \to \R$ (as in Lemma 3.1 from \cite{K2}), the shading is defined as the locus where $z|_{\g} \leq 0$.\footnote{Recall that $z$ is chosen to possess the following properties: 1) $0$ is a regular $z$-value  and $z^{-1}(0) = \d_1X$, 2) $z^{-1}((-\infty, 0]) = X$.}  %Since the ends of each $\hat \g$ are not in $X$, they are not shaded. 
Therefore, the ordered sequence of the multiplicities $\omega_{\hat\g} := (m(b_1), m(b_2), \dots , m(b_q))$ uniquely determines which intervals $(b_l, b_{l +1})$ along $\hat\g$ are shaded (cf. Fig. 1, where the shading corresponds to "strings" and "atoms"): each shaded interval can be marked with a unique lowest \emph{odd}-multiplicity point in it; also each atom can be marked. \smallskip

As in Section 2 (see the discussion preceding Fig. 1), we denote by $\sup_{odd}(\omega)$ the  points $l \in \N$ in the support of $\omega$ such that $\omega(l) \equiv 1\, \mod (2)$ and by $\sup_{ev}(\omega)$ the  points $l$ in the support of $\omega$ such that $\omega(l) \equiv 0\, \mod (2)$.  We can count the elements of $\sup_{odd}(\omega)$ and pick only the ones that acquire odd numerals in that count (in this way, we pick half of the elements in $\sup_{odd}(\omega)$) (cf. Fig. 1). We denote this set by $\sup_{odd}^+(\omega)$ and its complement by $\sup_{odd}^-(\omega)$. We divide the points of $\sup_{ev}(\omega)$ also into two complementary sets: the first one, $\sup_{ev}^-(\omega)$, contains points that are bounded from below by a point from $\sup_{odd}^+(\omega)$  and from above by a point from  $\sup_{odd}^-(\omega)$; the second one, $\sup_{ev}^+(\omega)$, contains points that are bounded from below by a point from  $\sup_{odd}^-(\omega)$  and from above by a point from  $\sup_{odd}^+(\omega)$. In fact,  $\Upsilon(\omega) = %sup_\spadesuit(\omega) := 
\sup_{odd}^+(\omega) \cup \sup_{ev}^+(\omega)$.\smallskip

Consider $\omega' \succeq \omega$, where $\omega', \omega \in \Omega_{\langle m]}$, and two elements-markers $k' \in \Upsilon(\omega')$ and $k \in \Upsilon(\omega)$. Our next task is to define a relation ``$k' \leadsto k$" between $k'$ and $k$. Recall that $\omega$ can be obtained from $\omega'$ by a sequence of elementary merges and multiplicity 2 inserts as described in (\ref{eq2.1})-(\ref{eq2.3}). Given a marker $k' \in \Upsilon(\omega')$, we will describe its evolution under these elementary transformations of $\omega'$.  An insertion of an even multiplicity point does not affect any marker below the insertion and shifts the location of the marker above the insertion by one. If two points from $\sup_{ev}^+(\omega')$ merge and one of them is marked, the maker is placed at the location where the merge took place. If a marked point from $\sup_{ev}^+(\omega')$ is merging with a point from $\sup_{odd}^+(\omega')$, the location of the merge is marked. On the other hand, if a marked point from $\sup_{ev}^+(\omega')$  is merging with a point from $\sup_{odd}^-(\omega')$, then the marker is placed at the first point of $\sup_{odd}^+(\omega')$ located below the merge---the marker ``travels down until it reaches a point of $\sup_{odd}^+(\omega')$". Finally, if a marked point from $\sup_{odd}^+(\omega')$ merges with the adjacent point from $\sup_{odd}^-(\omega')$, the marker is placed at the first point of $\sup_{odd}^+(\omega')$ located below the merge.  If one inserts a point of an even multiplicity, the marker  does not change its location.

Therefore, when  $\omega' \succeq \omega$, the marker $k'$ defines a unique marker $k \in \Upsilon(\omega)$. In such a case, we say that $k'$ \emph{collapses} to $k$ and write ``$k' \leadsto k$".  
\smallskip

%CHECK THAT THE RESULTING MARKER IS INDEPENDENT ON THE SEQUENCE OF ELEMENTARY OPERATIONS !!!! ????

There is a natural and already familiar map $\Xi : \Omega^\mu \to \Omega^\bullet$ which acts from the set of all marked pairs $(\omega, k)$ to the set $\Omega^\bullet$ of strings and atoms (see Fig. 1).  By definition, $\Xi$ takes each pair $(\omega, k)$, $k \in \Upsilon(\omega)$,  to the  restriction of $\omega$ to the unique maximal interval $[k, j(k)]$ of indices in $\sup(\omega)$ with the property $\wp_\omega(j) \leq 0$ for all $j \in[k, j(k)]$. Then, by a shift of indices, one reinterprets $\omega: [k, j(k)] \to \N$ as a map $\omega: [1, j(k) - k -1] \to \N$, an element of $\Omega^\bullet$.   In fact,  $\Xi : \Omega^\mu_{\langle 2n + 2]} \to \Omega^\bullet_{'\langle n]}$ for each $n$.

%{\bf BEGIN DELETE}
%Let  $\kappa: \prod_i \, \Omega_{[0, j_i]} \to \Omega_{[0, m(\sum_i j_i)]}$ be the map that takes the ordered sequence of functions $\{\omega_i \in \Omega_{[0, j_i]}\}_i$ to a singe obvious function $(\{\omega_i\}_i)$ on the ordered disjoint union $\coprod_i sup(\omega_i)$.\smallskip

%The next theorem claims that, for versal fields $v$, the topological and even combinatorial types $\mathsf T_{\g}$ of the space $\mathcal T(v)$, in the vicinity of each point-trajectory $\g$, are determined by the element $\omega_\g \in \Omega^\bullet_{[0, n]}$.  

%Prior to stating the theorem, we need to fix some notations. Let $\g_\star$ be a typical $v$-trajectory. Assume that $\g_\star \cap \d_1X = \{a_i\}$, where $a_i \in \d_{j_i}X^\circ$. Put $\a_i = f(a_i)$, where $f$ is a smooth function in the vicinity of $\g_\star$, such that $df(v) > 0$.  Denote by  $\omega_\star$ the function $\N \to \Z_+$ defined by $\omega_\star(i) = j_i$. 
%{\bf END DELETE}

\smallskip

In the following theorem, we combine the acquired knowledge about  one particular cellular structure of the model spaces $\{\mathsf T_\omega\}$ with the local models for traversally generic vector fields (described in Lemma \ref{lem1.2}). This leads to a \emph{local} purely combinatorial description of trajectory spaces for traversally generic fields.  Regretfully, the formulation of the theorem is lengthy. 

\begin{theorem}\label{th5.3} Let $X$ be a compact smooth $(n+1)$-manifold with boundary. Let $v$ be a traversally generic vector field on $X$ and  $\g$ its trajectory of the combinatorial type $\omega$.  Then the following statements hold: 
\begin{itemize}

\item In the vicinity of $\g$, the  trajectory space $\mathcal T(v)$ has a structure of the model $|\omega|'$-dimensional finite cell complex $\mathsf T_\omega$ times  $\R^{n - |\omega|'}$.\smallskip 

\item Each open cell $\mathsf E_{\hat\omega, k} \subset \mathsf T_\omega$ (of dimension $|\omega|' - |\hat\omega|'$) is indexed by an element $\hat\omega =   \prod_i \hat\omega_i$ of the poset $\prod_i \Omega_{\langle \omega(i)]}$\footnote{where the  partial order in  $\prod_i \, \Omega_{\langle \omega(i)]}$ is the one of the posets product.} %\footnote{This product order is  induced by the $i$-indexed independent bifurcations of real roots of  real depressed polynomials of degree $\omega(i)$ (as in  Section 6.5 REF)}
together with a marker $k \in \Upsilon(\kappa(\hat\omega))$. Every point in  $\mathsf E_{\hat\omega, k}$ belongs to the pure stratum $\mathcal T\big(v,\, \Xi(\kappa(\hat\omega), k)\big)$ of $\mathcal T(v)$, labeled by the element $\Xi(\kappa(\hat\omega), k)\big) \in \Omega^\bullet_{'\langle n]}$. \smallskip

\item  $\mathsf E_{\hat\omega', k'} \subset \mathsf E_{\hat\omega_\succeq, k}$ if and only if  the following two conditions are satisfied: 

$\mathbf{(1)}$ $\hat\omega \succeq \hat\omega'$, and 

$\mathbf{(2)}$ the markers $k$ and $k'$ satisfy the relation `` $k \leadsto k'$".\smallskip

\item Employing the attaching maps $\Psi_{\hat\omega}: e_{\hat\omega} := \prod_i e_{\hat\omega_i} \to \R^{|\omega|'}$ %$\prod_i \Phi_{\a_i, \omega_i} : e_{\omega} \to  \prod_i \R^{j_i -1}_{coef, \a_i}$ 
from (\ref{eq5.3}), the space $\mathsf  T_{\omega}$ can be assembled from the marked cells $\{(e_{\hat\omega}, k)\}_{\hat\omega, k}$  according to the rules described in Fig. 4. Each  closed cell $\mathsf E_{\hat\omega_\succeq,\, k}$ is the image of the cell $(e_{\hat\omega}, k)$ under the attaching map $\Psi_{\hat\omega,\, k}: \coprod_{\hat\omega, k}(e_{\hat\omega}, k) \to \mathsf T_{\omega}$  based on (\ref{eq5.3}). \smallskip

\item Each cell $\mathsf E_{\hat\omega_\succeq,\, k}$, where $\kappa(\hat\omega) \succ \omega$, topologically  is a positive cone  with the apex at $\g \in \mathcal T(v)$ over a compact cell $\mathsf S_{\hat\omega_\succeq,\, k}$.  These cells $\{\mathsf S_{\hat\omega_\succeq,\, k}\}$ form a link of the point $\g$ in $\mathsf T_{\omega}$. The rules that describe their incidence are similar to the incidence rules for $\{\mathsf E_{\hat\omega_\succeq,\, k}\}$'s  in $\mathsf T_{\omega}$. \smallskip

\item $\mathsf T_{\omega}$ is equipped with a  ramified cellular map $p: \mathsf T_{\omega} \to \prod_i \R^{\omega(i) -1}_{\mathsf{coef},\, i}$ whose  fibers are of the cardinality $|\omega|/2$ at most. The image $\mathsf E_{\hat\omega} := p(\mathsf E_{\hat\omega, k})$ is a product of cells $\{\mathsf E_{\hat\omega_i}\}_i$.  The cellular structure $\{\mathsf E_{\hat\omega_i}\}_{\hat\omega_i \in \Omega_{\langle\omega(i)]}}$ in each space $\R^{\omega(i) - 1}_{\mathsf{coef},\, i}$ is described in Theorem \ref{th4.1}  and Lemma \ref{lem4.3} in terms of the poset $(\Omega_{\langle \omega(i)]}, \succ)$. 

\end{itemize}
\end{theorem}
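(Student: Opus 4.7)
The plan is to assemble the proof in four stages, by first reducing to the local polynomial model of (\ref{eq5.1}) supplied by Lemma \ref{lem1.2}, then transferring the cellular structures of Theorems \ref{th4.1}--\ref{th4.2} through the homeomorphism $A_\omega$ of (\ref{eq5.2}), and finally lifting everything through the ramified quotient $p : \mathsf T_\omega \to \R^{|\omega|'}$. The first stage reduces a $\hat v$-adjusted neighborhood of $\g$ to $E_\omega \times \R^{n-|\omega|'}$ with $X$ cut out by (\ref{eq5.1}); since the equivalence ``$\sim$'' affects only the $u$-coordinate, the germ of $\mathcal T(v)$ at $\g$ is canonically $\mathsf T_\omega \times \R^{n-|\omega|'}$. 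For the fiber-cardinality bound in the last bullet, every fiber of $p$ corresponds to the connected components of $\{u : P(u,x) \leq 0\}$, and between any two consecutive components $P$ must change sign an even positive number of times, consuming at least two real roots (with multiplicity); hence the number of components is at most $|\omega|/2$.

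Stage two transports the cellular structures. By Theorem \ref{th4.2}, each factor $\R^{\omega(i)-1}_{\mathsf{coef},\, \a_i}$ carries an $\Omega_{\langle \omega(i)]}$-labelled cellular structure with attaching maps $\Phi_{\a_i,\, \hat\omega_i} : e_{\a_i,\, \hat\omega_i} \to \mathsf R_{\a_i,\, \hat\omega_i}$. Since $A_\omega$ is a smooth homeomorphism, its inverse pulls the product structure back to $\R^{|\omega|'}$, producing the cells $e_{\hat\omega} := \prod_i e_{\a_i,\, \hat\omega_i}$ and the attaching maps $\Psi_{\hat\omega}$ exactly as in (\ref{eq5.3}). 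Their incidences are governed by the product partial order on $\prod_i \Omega_{\langle \omega(i)]}$ via the product-case of Lemma \ref{lem4.3}, and the cone structure with apex at the origin is furnished by the product of the balanced flows $\Psi_t^{\a_i}$.

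Stage three lifts this $\R^{|\omega|'}$-level cellular structure to $\mathsf T_\omega$ using markers. Over a pure cell in $\R^{|\omega|'}$ indexed by $\hat\omega$, the real divisor of $P(u, x)$ has the locally constant combinatorial pattern $\kappa(\hat\omega)$, so the connected components of $\{u : P(u,x) \leq 0\}$ form a locally trivial family as $x$ varies. These components are in canonical bijection with the markers $k \in \Upsilon(\kappa(\hat\omega))$, each marker singling out either the minimal odd-multiplicity root of a shaded string or an isolated even-multiplicity atom. This yields a decomposition of the $p$-preimage into open cells $\mathsf E^\circ_{\hat\omega, k}$, and a point of $\mathsf E^\circ_{\hat\omega, k}$ represents a trajectory whose $\Omega^\bullet$-type is precisely $\Xi(\kappa(\hat\omega), k) \in \Omega^\bullet_{'\langle n]}$, thereby verifying the second bullet. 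The attaching map $\Psi_{\hat\omega, k}$ is the composition of $\Psi_{\hat\omega}$ with the continuous section of $p$ that picks out the $k$-th component, extended across $\d e_{\hat\omega}$ by tracking the evolution of the marker under the elementary operations.

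Stage four, the main obstacle, is to verify that this marker-tracking agrees with the combinatorial rules (\ref{eq5.4})--(\ref{eq5.5}) encoded in Fig.\ 4, which amounts to analyzing the bifurcations of $\{P(u,x) \leq 0\}$ as $x$ crosses a codimension-one face between adjacent pure strata. I would enumerate the elementary transitions: the disappearance of an atom into a complex conjugate pair (component and marker are erased); the coalescence of two adjacent atoms; the absorption of an atom into the endpoint of a shaded string; and, most subtly, the fusion of two adjacent shaded strings along a pair of odd-multiplicity endpoints of opposite parity-labels, which forces the upper marker to ``travel down'' to the next $\sup_{odd}^+$ position. The non-locality of this last rule is the crux of the theorem: one must show that the physical interval containing the marked point persists through the merger but now belongs to the lower shaded block, which follows from a direct sign analysis of $\wp_\omega$ across the merging odd pair in the factored form (\ref{eq5.1}). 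Once each elementary case is matched, an induction on the length of a sequence of elementary operations linking $\hat\omega \succ \hat\omega'$ together with the corresponding chain of moves $k \leadsto k'$ establishes the third bullet. The fifth bullet, on cone structure and link, is then a direct consequence of the $\psi^{\a_i}_t$-equivariance of all constructions, while the ramified cellular projection in the sixth bullet is the map $p$ itself, which has been cellular by construction.
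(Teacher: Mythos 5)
Your proposal follows essentially the same route as the paper's own proof: reduction to the local polynomial model of Lemma \ref{lem1.2}, transport of the cellular structures of Theorems \ref{th4.1}--\ref{th4.2} through the homeomorphism $A_\omega$, identification of the markers with the connected components of $\hat\g \cap X$ over each pure stratum, verification of the incidence rules by tracking marker collapses along paths degenerating from a pure stratum to an adjacent lower one (the content of Fig.~4), the balanced dilation flows for the cone and link structure, and the $|\omega|/2$ bound on the fibers of $p$. One small correction: in the fiber-cardinality count, attribute the two real roots (with multiplicity) to each component of $\{u : P(u,x)\le 0\}$ itself---two odd-multiplicity endpoints for an interval, or one even-multiplicity root for an atom---rather than to the gaps between consecutive components, since counting per gap only yields the weaker bound $|\omega|/2 + 1$.
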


\begin{figure}[ht]
\centerline{\includegraphics[height=2in,width=2.5in]{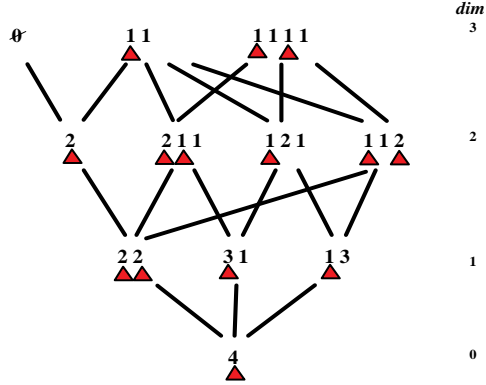}}
\bigskip
\caption{\small{The Swallow Tail singularity  poset $\Omega_{\langle 4]}$ with all possible markers $k$, denoted here by the symbol ``$_\triangle$". The dimensions of the strata are shown on the right. The diagram indicates that the $3$-complex $\mathsf T_{(4)}$ is assembled from three 3-cells, six 2-cells,  four 1-cells and one 0-cell.}}
\end{figure}

\begin{figure}[ht]
\centerline{\includegraphics[height=2in,width=2.5in]{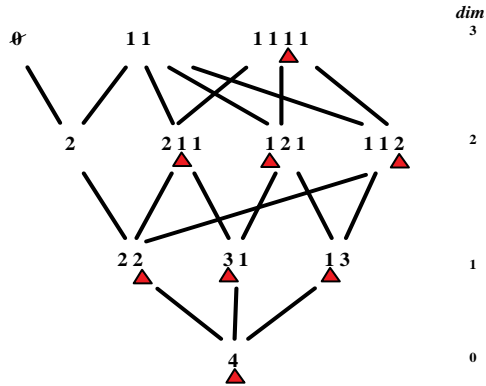}}
\bigskip
\caption{\small{The  poset $\Omega_{\langle 4]}$ with the downwards evolution ``$\leadsto$" of a particular marker $_\triangle$ attached to $\omega = 1 1 1 1$. The diagram indicates that the marked 3-cell $e_{1, 1, 1_\triangle, 1}$ in $\mathsf T_{(4)}$ shares its boundary with three marked 2-cells:  $e_{2, 1_\triangle, 1}$, $e_{1_\triangle, 2, 1}$, $e_{1, 1,  2_\triangle}$, and that each of these is attached to two marked 1-cells, and so on... }}
\end{figure}

\begin{proof} Recall that for $v \in \mathcal V^\dagger(X)$ and each trajectory $\hat\g \in \hat X$, the intersection $\hat\g \cap X$ consists of a number of closed ("shaded") intervals and few isolated points of even multiplicities. Each of these intervals or isolated points defines a singleton in the space $\mathcal T(v)$. We  place a single marker ``$_\Delta$" at one of the lower ends of the shaded intervals or at one of the isolated points, so that a trajectory $\hat\g \in \hat X$ and the marker $_\Delta \in \hat\g \cap X$ will determine a point in $\mathcal T(v)$. Thus a pair $(\hat\g,\, _\Delta)$, where $\hat\g \in \mathcal T(\hat v)$,  can be viewed a point of $\mathcal T(v)$. Here we distinguish between the combinatorial marker $k$ and its geometrical realization $_\Delta \in \d_1X \cap \hat\g$. 

In the vicinity of a given trajectory $\g \subset X$, each trajectory $\hat\g \subset \hat X$ is determined by the intersection point $\hat\g \cap S$, where $S$ denotes a local section of the $\hat v$-flow through a point on $\g$ which resides slightly below the first intersection $a_1 \in \g \cap \d_1X$.  By Theorems \ref{th4.1} and \ref{th4.2}, the combinatorial types $\hat\omega \in \prod_i \, \Omega_{\langle\omega(i)]}$, or rather their $\kappa$-images, label cells $\mathsf E_{\hat\omega}$ of codimension $|\kappa(\hat\omega)|'$ in $S$. To define the corresponding cell decomposition of $\mathcal T(v)$ in the vicinity of $\g$, we need multiple copies $\{\mathsf E_{\hat\omega, k}\}_k$ of each cell $\mathsf E_{\hat\omega}$, the copies that are indexed by all admissible markers $k$.  Their number is the cardinality of the set $\Upsilon(\kappa(\hat\omega))$. For each $\hat\g$,  there is a canonical correspondence between the markers $k \in \Upsilon(\kappa(\hat\omega))$ and the connected components $\hat\tau \in \pi_0(\hat\g \cap X)$. So we will  use elements $k \in \Upsilon(\kappa(\hat\omega))$ and $\hat\omega \in \prod_i \, \Omega_{\langle \omega(i)]}$ to label all the cells in $\mathcal T(v)$.
\smallskip

We are going to show that, for any $\hat\omega \succ \hat\omega'$ in $\prod_i \, \Omega_{\langle \omega(i)]}$, a $k$-marked cell $\mathsf E_{\hat\omega, k}$ is incident to a $k'$-marked cell $\mathsf E_{\hat\omega', k'}$, where $k \leadsto k'$, if and only if $\mathsf E_{\hat\omega}$ is incident to $\mathsf E_{\hat\omega'}$ in $S$. 

In the vicinity of $\g$, let us employ the special coordinates $(u, x, y)$ as in Lemma \ref{lem1.2}. Consider a smooth path $\{w(t)\}_{0 \leq t < 1}$ in the pure stratum $S(\hat\omega) \subset S$ so that $\lim_{t \to 1}w(t) = w_1 \in S(\hat\omega')$.  The  corresponding trajectories $\g_{w(t)} \subset \hat X$ converge to a trajectory $\g_{w_1}$ in such a way that  the sets $\g_{w(t)}\cap \d_1X$  go through merges of adjacent points or/and insertions of even multiplicity points. Examining the evolution of each  element $\hat\tau_t \in \pi_0(\g_{w(t)} \cap X)$ into an element  $\hat\tau'  \in \pi_0(\g_{w_1} \cap X)$, we see that it is captured by a marker collapse  $_{\Delta_t} \leadsto _{\Delta'}$ as depicted in Fig. 4.  Also glance at Fig. 6 which gives an example of such evolution. \smallskip

With the rigid rules for the collapses $\tau \leadsto \tau'$ from Fig. 4 in place, we can reconstruct the germ $\mathcal T_\g$ of  $\mathcal T(v)$ at $\g$ from the cell complex $S$ (whose cells are indexed by the elements of the poset  $(\prod_i \Omega_{\langle \omega(i)]}, \succ)$), and ultimately, from  $\omega = \omega(\g)$ itself. Speaking informally, in order to attach  a cell $\mathsf E_{\omega_\succeq,\, k}$ to a cell $\mathsf E_{\omega',\, k'}$, first we verify whether $k \leadsto k'$, and if the verification leads to a positive answer, we use the same map that attaches $\mathsf E_{\omega_\succeq}$ to $\mathsf E_{\omega'}$ in $S$.  

Let us describe a more formal construction of $\mathcal T_\g \approx \mathsf T_\omega \times \R^{n - |\omega|'}$.

For $\hat\omega = \{\hat\omega_i \in  \Omega_{\langle \omega(i)]} \}_i$, let $e_{\hat\omega}  = \prod_i e_{\hat\omega_i}$, where the hypersurface %the closed $(j_i - |\omega_i|' - 1)$-cone
\begin{eqnarray}\label{eq5.6}
e_{\hat\omega_i}  \subset \mathsf{Sym}^{|\sup(\hat\omega_i)|}(\R) \times \mathsf{Sym}^{\frac{\omega(i) - |\hat\omega_i|}{2}}(\H)
\end{eqnarray}
consists of  divisors $(D'_i, D''_i)$ such that $D'_i + D''_i + \tau(D''_i)$ is $i$-balanced.

%\begin{eqnarray}
%e_\omega = \prod_i e_{\omega_i} :=  \prod_i \big[ \mathsf{Sym}^{|\sup(\omega_i)|}(\R) \times \mathsf{Sym}^{j_i - |\omega_i|}(H)\big]
%\end{eqnarray}
Then $\mathsf T_\omega$ is the quotient of the space
 
\begin{eqnarray}\label{eq5.7}
\mathsf Z_{\omega} := 
\coprod_{\hat\omega \in \prod_i \Omega_{\langle \omega(i)]},\, \; k \in \Upsilon(\kappa(\hat\omega))}
\; (e_{\hat\omega}, k)
\end{eqnarray} 
by the equivalence relation ``$(z, k) \sim (z', k')$"  that is defined as follows:
\begin{itemize}
\item $\hat\omega \succ \hat\omega'$ in the poset $\prod_i (\Omega_{\langle \omega(i)]}, \succ)$, 
\item $z \in \d e_{\hat\omega},\; z' \in e^\circ_{\hat\omega'}$ are such that $\Psi_{\hat\omega}(z) =  \Psi_{\hat\omega'}(z')$ in the space $\prod_i \R^{\omega(i) -1}_{\mathsf{coef},\, i}$, where the $\Psi$-maps are defined by (\ref{eq5.3}),
\item $k \leadsto k'$.
\end{itemize}

Each closed cell $\mathsf E_{\hat\omega_\succeq, k} \subset \mathsf T_\omega$ is defined as the equivalence class of the set $(e_{\hat\omega}, k)$  in the quotient space $\mathsf Z_{\omega}/ \sim$. Its interior is homeomorphic to  an open ball $e_{\hat\omega}^\circ$ of dimension $|\omega|' - |\hat\omega|'$. 

As a result, for a traversally generic $v$-flow, we have precise instructions for assembling the germ of a cell complex $\mathcal T_\g$ associated with each trajectory $\g \in \mathcal T(v)$, or rather, with its combinatorial type $\omega$.  

In the vicinity of $\g$,  the  projection $p: \mathcal T(v) \to S$ is evidently a finitely-ramified cellular map 
with respect to the cellular structures $\{\mathsf E_{\hat\omega, k}\}_{\hat\omega, k}$ and $\{\mathsf E_{\hat\omega}\}_\omega$.  

Note that not every cell $\mathsf E_{\hat\omega} \subset S$ is the $p$-image of some cell from $\mathcal T(v)$: the cells in $S$ that are pierced by the trajectories  $\hat\g \subset \hat X$ with the property $\hat\g \cap X = \emptyset$ are not in  $p(\mathcal T(v))$. However, each $\mathsf E_{\hat\omega}$ with $\sup(\hat\omega) \neq \emptyset$ belongs to $p(\mathcal T(v))$. 
% where $S$, the germ of an $n$-ball, is given the cellular structure indexed by the poset $\kappa(\prod_i \Omega_{[0, j]})$. The map $p$ has fibers whose cardinality is the number of ways in which we can attach markers $\triangleleft$ to the points of $sup(D_{\omega})$.  Here the divisor $D_{\omega}$ is of the combinatorial type $\omega$ from  the set $\kappa(\prod_i \Omega_{[0, j]})$. The cell structure in the germ of $\mathcal T(v)$ at $\g_\star$ is induced by $p$. In that structure, $p$ is a finitely-ramified cellular map. 
\smallskip

For each $\hat\omega$, the $p$-fiber over the pure stratum  $S(\kappa(\hat\omega))^\circ \subset S$ is of the cardinality $\#[\Upsilon(\kappa(\hat\omega))]$. We would like to get an upper bound for $\#[\Upsilon(\kappa(\hat\omega))]$ in terms of the $\omega$. In fact, $m(\g)/2 = |\omega|/2$ is such an upper bound. Indeed, the divisor $D_\g$ can be resolved in a divisor with $m(\g)$ simple roots. Recall that $m(\g)$ must be even, and, by Theorem 3.3  and Theorem  3.5 from \cite{K2}, any potential resolution of $D_{\g}$ (with the combinatorics described by elements of $\kappa(\prod_i \Omega_{\langle \omega(i)]})$) is realized in the vicinity of $\g$. Therefore $m(\g)/2$ shaded intervals do occur in the vicinity of $\g$. In fact, for traversally generic $v$, the number $m(\g)/2 \leq \dim(X)$ is the maximal possible cardinality of the $p$-fibers in the vicinity of $\g$.
\smallskip

Next we turn to describing the positive cone structure of $\mathcal T(v)$ at $\g$, which is consistent with its cellular structure $\{\mathsf E_{\hat\omega, k}\}_{\hat\omega, k}$.  In order to construct a base $\s_{\hat\omega}$  for the cone structure  in each cell $e_{\hat\omega}  = \prod_i e_{\hat\omega_i}$ (see (\ref{eq4.5})  and (\ref{eq5.3})), consider a set of disjoint closed $2$-balls $\{B(i) \subset \C\}_i$ of radius $1/3$ and with the center at $(i, 0) \in \C$. %such that $B(\a_i) \cap B(\a_{i'}) = \emptyset $ for all $i \neq i'$. 
We denote by $\s_{\hat\omega} \subset e_{\hat\omega}$ the set of complex  conjugation-invariant divisor pairs $(D', D'') = \oplus_i (D'_i, D''_i)$, where the divisor $D'_i + D''_i + \tau(D''_i)$ of degree $\omega(i)$ is $i$-balanced and  the $\sup(D'_i  + D''_i) \subset B(i)$; furthermore, at least for one $i$,  $\sup(D'_i + D''_i)$ is not contained in the interior of  $B(i)$. The group $\R^\ast_+$ acts semi-freely  on $e_{\hat\omega} = \prod_i e_{\hat\omega_i}$ by the diagonal action that  applies a $t$-dilatation centered on the point $(i, 0) \in \C$ to each  pair $(D'_i, D''_i) \in e_{\hat\omega_i}$. Evidently, for each pair $(D', D'')$ there is a single $t \in \R^\ast_+$ so that $t(D', D'') \in \s_{\hat\omega}$. This $\R_+^\ast$-action extends to the space $\mathsf Z_{\omega}$ in (\ref{eq5.3}), the action on the markers being trivial. 

By Theorem \ref{th4.1} and with the help of the Vi\`{e}te map $V$,  a similar semi-free action is available on the space $\prod_i \R^{\omega(i) - 1}_{\mathsf{coef}, i}$. Since  the maps $\prod_i \Phi_{i, \hat\omega_i}$ are $\R^\ast_+$-equivariant, the quotient space  $\mathsf  T_\omega  = \mathsf  Z_{\omega}/\sim$ inherits  a semi-free $\R_+^\ast$-action for which $\g$ is the only fixed point.  

Let $\mathsf S_{\hat\omega, k}$ be the image of $\s_{\hat\omega} \times k$ under the obvious map $\mathsf  Z_{\omega} \to  \mathsf T_{\omega}$. 
Any nontrivial $\R_+^\ast$-trajectory meets $\mathsf S_{\hat\omega, k}$ at a singleton. Therefore $\mathsf E_{\hat\omega, k}$ is a positive cone over $\mathsf S_{\hat\omega, k}$. By their constructions, these cone structures in the individual cells $\mathsf E_{\hat\omega, k}$ are well-correlated and produce a positive cone structure in $\mathsf T_\omega$. As a result, $\{\mathsf S_{\hat\omega, k}\}_{\hat\omega, k}$ define a cellular structure in the link of $\g$ in $\mathcal T_\g = \mathsf T_\omega \times \R^{n - |\omega|'}$.
\end{proof}

Here is a short summary of what we have established in this paper: a traversally generic $v$-flow generates  a stratification  $\{\mathcal T(v, \tilde\omega)\}_{\tilde\omega \in \Omega_{\bullet '\langle n]}}$ of the trajectory space $\mathcal T(v)$, which is consistent  (in a  subtle way!) with, but cruder than, the $\g$-local cellular structure 
$$\big\{\mathsf E_{\hat\omega, k} \times \R^{n - |\omega|'}\big\}_{\hat\omega \in \prod_i \Omega_{\langle\omega(i)]},\;\; k \in \Upsilon(\kappa(\omega))}$$ 
in the model space $\mathcal T_\g \approx \mathsf T_{\omega} \times \R^{n - |\omega|'}$ that we just have described. Each cell $\mathsf E_{\hat\omega, k} \times \R^{n - |\omega|'}$ belongs to the stratum $\mathcal T(v, \tilde\omega)$, where $\tilde\omega := \Xi(\kappa(\hat\omega)) \in \Omega_{\bullet '\langle n]}$. In the process, distinct cells could acquire the same label $\tilde\omega \in \Omega_{\bullet '\langle n]}$.
\bigskip

This understanding of the cellular structure of the spaces of real polynomials of the degree $2n+2$---the structure which reflect the universal posets $\Omega_{\langle 2n+2]}$ and  $\Omega_{\bullet '\langle n]}$---will enable us, in the papers to follow, to define new rich characteristic classes of traversally generic  flows on $(n+1)$-manifolds with boundary.

\end{document}